\documentclass[final]{siamltex}
\usepackage[hmargin=2.5cm,vmargin=3cm]{geometry}
\usepackage{graphicx}
\usepackage{amssymb}
\usepackage{amsmath}
\usepackage{epstopdf}
\usepackage{algorithm}
\usepackage{algorithmic}
\usepackage{setspace}
\usepackage{url}
\usepackage{color}
\usepackage{cancel}

\usepackage{lscape}

\usepackage{bbm}

\newcommand{\indicator}[1]{\mathbbm{1}_{\left[ {#1} \right] }}

\newcommand{\eps}{\varepsilon}

\newtheorem{remark}{ Remark}

\newtheorem{example}{Example}[section]
\newtheorem{condition}{Condition}[section]
\DeclareGraphicsRule{.tif}{png}{.png}{`convert #1 `dirname #1`/`basename #1 .tif`.png}

\DeclareMathOperator*{\argmax}{arg\,max}

% Probability ----------------------------------------------

\begin{document}
\baselineskip 0.58cm
\bibliographystyle{apalike}

\title{Filtering the maximum likelihood for multiscale problems}
\author{Andrew Papanicolaou\footnotemark[2]\and Konstantinos Spiliopoulos\footnotemark[3]}
\footnotetext[2]{Department of ORFE, Princeton University, Sherrerd Hall, Charlton Street, Princeton NJ 08544, \em{apapanic@princeton.edu}. Work partially supported by NSF grant DMS-0739195.}
\footnotetext[3]{Department of Mathematics \& Statistics, Boston University, 111 Cummington Street, Boston MA 02215, \em{kspiliop@math.bu.edu}.
Work partially supported  by NSF grant DMS-1312124.}

\maketitle
%\tableofcontents

\begin{abstract}
Filtering and parameter estimation under partial information for multiscale diffusion problems is studied in this paper. The nonlinear filter converges in the mean-square
sense to a filter of reduced dimension. Based on this result, we  establish that the conditional (on the observations) log-likelihood process has a correction term given
by a type of central limit theorem. We prove that an appropriate normalization of the log-likelihood minus a log-likelihood of reduced dimension converges weakly to a
normal distribution. In order to achieve this we assume that the operator of the (hidden) fast process has a discrete spectrum and an orthonormal basis of eigenfunctions. We then propose to estimate the unknown model parameters using the reduced log-likelihood, which is beneficial because reduced dimension means that there is significantly less runtime for this optimization program. We also establish consistency and asymptotic normality of the maximum likelihood estimator. Simulation results illustrate our theoretical findings.
\end{abstract}
\begin{keywords}Ergodic filtering, fast mean reversion, homogenization, Zakai equation, maximum likelihood estimation, central limit theory.\\
{\bf Subject classifications. } 93E10 93E11 93C70
\end{keywords}

%\tableofcontents
%%%%%%%%%%%%%%%%%%%%%%%%%%%%%%%%%%%%%%%%%%%%%%%%%%%%%%%%%%%%
\section{Introduction} \label{S:Introduction}
In this paper we consider the problem of filtering and parameter estimation for stochastic differential equations (SDEs) with multiple time scales. The model has parameter $0<\delta\ll 1$ that separates the slow and fast scales of the system, and it is assumed that $\delta$ is known a priori. The filtering problem involves two
 SDEs: a hidden ergodic diffusion process $X^{\delta}$ whose solution is known to be a path from an  SDE with a fast time scale of $1/\delta$, and an observation $Y^{\delta}$ that depends on $X^{\delta}$ but evolves in a slow time scale that is of order 1. The parameter estimation problem arises when the SDE satisfied by $(Y^{\delta},X^{\delta})$  has an unknown parameter $\theta\in\Theta$ where $\Theta\subset\mathbb{R}^{d}$.

Under the appropriate conditions, the nonlinear filter converges in a mean-square sense to a homogenized filter of reduced
 dimension. Based on this result and under the additional assumption that the infinitesimal generator of the fast process has a discrete spectrum with an orthonormal basis of eigenfunctions, we establish a central limit theorem (CLT) for the (conditional) log-likelihood. In particular, we prove that the difference of the log-likelihood (in other words, the log of the solution to the Zakai equation with input test function of $f\equiv 1$) minus a  log-likelihood of reduced dimension, normalized by $\sqrt{\delta}$, converges weakly to a centered normal distribution with a variance that is a function of the model parameters. To the best of the authors' knowledge, the CLT proven in this paper is the first of its kind. We also establish consistency and asymptotic normality of the maximum likelihood estimator (MLE) of the reduced log-likelihood. Compared to the original log-likelihood, the computation of the MLE based on the reduced log-likelihood is simpler and faster to compute.

This work is related to other works in filtering, wherein the observed process evolves in a slower scale than the hidden process. In \cite{Kushner}, it is shown that the difference of the unnormalized actual filter and its homogenized counterpart goes to zero in distribution for fixed test functions. The authors in \cite{BensoussanBlankenship1986,Ichihara2004} study homogenization of nonlinear filtering based on asymptotic analysis of a dual representation of the filtering equation. The authors in \cite{ParkSriSowers2008, ParkRozovskySowers2010,ParkSriSowers2011,ImkellerSriPerkowskiYeong2012} prove convergence in probability and in the $pth$-norm (in the latter article) of the nonlinear filter to its homogenized version. Notably, in \cite{ImkellerSriPerkowskiYeong2012} the authors use a formulation through backward SDEs and make use of the estimates for the related transition probability densities of \cite{PardouxVeretennikov2}; they also obtain rates of convergence in $L^{p!
 }$.  In \cite{KleptsinaLiptserSerebrovski1997}, the authors prove convergence of the filter in mean square sense and in a quite general setting; they assume convergence of the total variation norm of $(Y^{\delta},X^{\delta})$ and also assume convergence in probability of the slow part of the hidden component.

Parameter estimation problems for partially observed processes have been also studied elsewhere in the literature, e.g., \cite{Kutoyants, JamesGland1995}, although the effect
of multiple scales was not studied there. Moreover, in \cite{PapavasileiouPaviotisStuart2009} the authors study maximum likelihood estimation for fully-observed systems (not partially observed as in our case) of multiscale processes where the fast process takes values on a compact set.

The aforementioned existing literature has focused on proving convergence of the nonlinear filter to a filter of reduced dimension, namely to understand the dominant limiting behavior.
In this paper, we are interested in parameter estimation for such models. Thus, for statistical inference purposes we need to prove that the filter will be close to a filter of reduced dimension for any parameter value (and not just for the true parameter value), with closeness referring to either convergence in probability or mean square under the measure parameterized by the true parameter value. We establish that this result is true in the $L^{2}$-sense and also show that convergence results in the existing literature can be extended to a class of unbounded test functions that have more than two moments. Then, we obtain a CLT for the difference between the log-likelihood function and the log-likelihood from the filter of reduced dimension. To obtain the CLT, we further assume that the infinitesimal generator of the fast process has a discrete spectrum with an orthonormal basis of eigenfunctions. The difference in the log-likelihood functions is of order $\sqrt{\delta}$, a!
 nd we are able to state explicitly the variance of the limiting centered normal distribution. We emphasize that the filter of reduced dimension uses the original observations, which are the only available observations, and hence, the results justify using the reduced log-likelihood for purposes of statistical inference. For computational purposes, it is simpler and much faster to implement the filter of reduced dimension than it is for the original log-likelihood.

Filtering is a well established area and some general references for stochastic nonlinear filtering are \cite{bainCrisan,Kallianpur,Kushner,Rozovskii}. Our motivation for studying parameter estimation for partially observed multiscale diffusion models comes from financial applications, e.g., convenience yield in commodities markets or estimation of latent states in markets with high frequency trading (HFT). For example, non-predatory HFTs lead to increased liquidity and faster price discovery. Hence, a change-point detection algorithm on HFT data can be used to determine when price discovery has occurred. Another application could be the detection of an increased bid-ask spread which may correspond to increased volatility. We refer the reader to \cite{hendershot,zhang} for related discussions.

The rest of the paper is organized as follows: Section \ref{S:model} presents the system of equations that we consider, states our main assumptions, and restates fundamental results from filtering theory. Section \ref{S:filterLimit} presents our results on the asymptotic properties of the filter and of the log-likelihood. In particular, in Subsection \ref{SS:filterLimit} we discuss the $L^{2}$-convergence of the nonlinear filter, a result which is used in Subsection \ref{SS:PreparatoryCLT} to establish the CLT for the log-likelihood; the CLT is the main result. These results are then used in Section \ref{S:ParameterEstimation} to justify the claim that parameter estimation can be based on the reduced system, where we prove consistency and asymptotic normality of the MLE of the reduced log-likelihood. A simulation study illustrating the theoretical results is  presented in Section \ref{SS:simExample}. Conclusions are in Section \ref{S:Conclusions}. For presentation purposes, !
 most of the proofs are deferred to Appendices \ref{A:FilterConvergence1} and \ref{A:lemmasAndProps}.
%%%%%%%%%%%%%%%%%%%%%%%%%%%%%%%%%%%%%%%%%%%%%%%%%%%%%%%%%%%%
\section{Formulation of Problem and Known Preliminary Results} \label{S:model}
On a probability space $(\Omega,(\mathcal F_t)_{t\leq T},\mathbb P)$ with $T<\infty$, for positive integers $m,n$ we consider the $(m+n)$-dimensional process $(X^{\delta},Y^{\delta})=\{(X^{\delta}_{t},Y^{\delta}_{t})\in\mathbb R^m\times\mathbb R^n, 0\leq t\leq T\}\in \mathcal C([0,T];\mathbb R^m\times\mathbb R^n)$, which satisfies a system of stochastic differential equations (SDE's)
\begin{eqnarray}
dY^{\delta}_{t}&=&h_{\theta}\left(X^{\delta}_{t}\right)dt + dW_{t}\nonumber\hspace{4cm}\hbox{(observed)}\\
dX^{\delta}_{t}&=& \frac{1}{\delta}b_{\theta}\left(X^{\delta}_{t}\right)dt + \frac{1}{\sqrt{\delta}}\sigma_{\theta}\left(X^{\delta}_{t}\right)dB_{t}\label{Eq:Model3}\hspace{2cm}\hbox{(hidden)}
\end{eqnarray}
where $(W_t)_{t\leq T}$ and $(B_t)_{t\leq T}$ are (unobserved) independent Wiener processes in $\mathbb{R}^{n}$ and $\mathbb R^m$, respectively. Our general assumptions on the functions $h_\theta,b_\theta$ and $\sigma_\theta$ are given in Section \ref{SS:coefficients}, but some of our theorems will require a stronger assumption on the spectrum of the infinitesimal generator of the $X$-process given in Section \ref{SS:SpectralDecomposition}. We assume that the parameter $\theta$ is also unknown, but takes values in a set $\Theta\subset\mathbb{R}^{d}$ with $d$ being a positive integer. Initially, the process $X_0^\delta$ is distributed according to a given prior distribution, and from here forward we  take $Y_0=0$. We denote the probability measure with $\mathbb P$, but we work with the parameterized family $(\mathbb P_\theta)_{\theta\in\Theta}$ in order to denote probabilities that are conditional on the parameter value,

\[\mathbb P_\theta((X^\delta,Y^\delta)\in\mathcal B) = \mathbb P\left((X^\delta,Y^\delta)\in\mathcal B\Big|~\hbox{$\theta$ is parameter in equation \eqref{Eq:Model3}}\right)\qquad\forall\theta\in\Theta ,\]
for any Borel set $\mathcal B\subset C([0,T],\mathbb R^m\times\mathbb R^n)$, and we let $\mathbb E_\theta$ denote its expectation operator. The parameter value to be estimated is the true (but unknown) value of $\theta$; we denote the true value by $\alpha\in\Theta$.

Our goal for this paper is to develop a theoretical framework allowing statistical inference on the unknown parameter  $\theta$ given an observed path $(Y_s^\delta)_{s\leq t}$ and assuming that $0<\delta\ll 1$. In particular, our goal in this paper is twofold:
\begin{enumerate}
\item{Obtain the limiting behavior and a central limit theorem (CLT) type correction for the posterior (on the observed path $(Y_s^\delta)_{s\leq t}$) likelihood function as $\delta\downarrow 0$.}
\item{Using the asymptotic behavior of the likelihood function, develop a framework for statistical inference for the unknown parameter $\theta$ given an observed path $(Y_s^\delta)_{s\leq t}$, assuming that $0<\delta\ll 1$.}
\end{enumerate}

In Subsection \ref{SS:coefficients} we establish notation and conditions guaranteeing ergodicity and that the filtering problem is well posed. Then, in Subsection \ref{SS:SpectralDecomposition}, we introduce a more specific framework wherein the infinitesimal generator of the fast process $X^{\delta}$ with $\delta=1$ has a discrete spectrum with an orthonormal basis of eigenfunctions, which allows us to establish the CLT of Theorem \ref{thm:CLT}. Then, in Subsection \ref{S:FilteringReview} we review some known, useful results from filtering theory.

%%%%%%%%%%%%%%%%%%%%%%%
\subsection{Notation and General Assumptions}\label{SS:coefficients}
Let $a,b$ be two vectors in some Euclidean space, say $\mathbb{R}^{n}$. For notational convenience we shall often write $a\cdot b$ or simply $ab$ for their inner product and we will denote by $|\cdot|$ the standard Euclidean norm.

Moreover, we denote by $\mathcal{X}=\mathbb{R}^{m}$ the state space of the fast component $X$. For any $f\in\mathcal{C}^{2}(\mathcal{X})$, we define the set of operators $\left(\mathcal L_\theta\right)_{\theta\in\Theta}$ such that
\begin{equation}
\mathcal{L}_\theta f(x)=b_\theta(x)\cdot D_{x}f(x)+\frac{1}{2}\textrm{tr}\left[\sigma_\theta(x)\sigma_\theta^{T}(x)D^{2}_{x}f(x) \right]\ ,
\end{equation}
where $D_x$ is the gradient operator. From \eqref{Eq:Model3} it follows that $\frac 1\delta\mathcal L_\theta$ is the infinitesimal generator of $X_t^\delta$.

We will make several assumptions on the growth and smoothness of the coefficients in order to guarantee that (\ref{Eq:Model3}) has a well-defined strong solution, that the fast component $X_t^\delta$ is ergodic, that the slow component $Y_t^\delta$ has a well defined homogenization limit as $\delta\downarrow 0$ in the appropriate sense, and that the filtering equations make sense. A set of assumptions that guarantee these properties are contained in the following condition (see \cite{PardouxVeretennikov2} for ergodic theory where they consider parts i) through iv) given below, and also Chapter 3 of \cite{bainCrisan} for filtering):
\begin{condition}\label{A:Assumption1}
\begin{enumerate}
\item{In order to guarantee the existence of an invariant measure $\mu_{\theta}(dx)$ for $X^1$ (i.e., for the process $X_{t}^{\delta}$ with $\delta=1$) we assume that
\[
\lim_{|x|\rightarrow\infty}\sup_{\theta\in\Theta} b_{\theta}(x)\cdot x=-\infty.
\]
%Under this assumption the Lyapunov-type condition for existence of an invariant measure of Hasminskii \cite{Hasminskii} is satisfied.
}
\item{To guarantee uniqueness of the invariant measure for $X^1$, we assume that $\sigma_{\theta}(x)\sigma_{\theta}^{T}(x)$ is uniformly non-degenerate in $\theta$, i.e., there exist constants $c(\theta)>0$ such that for all $x\in\mathcal{X}$
\[
\left|\xi\sigma_{\theta}(x)\right|^{2}\geq c(\theta)|\xi|^{2},\quad\textrm{ for all $(\theta,\xi)\in\Theta\times\mathbb{R}^{n}$ and for all $x\in\mathbb R^n$}.
\]}
\item{$\sigma_{\theta}(x)\sigma_{\theta}^{T}(x)$ is bounded  in $(\theta,x)\in\Theta\times\mathcal{X}$ and $\sigma_{\theta}(x)$ is globally Lipschitz in $x\in\mathcal{X}$ uniformly in $\theta\in\Theta$.}
\item{$b_{\theta}(x)$ is locally bounded and globally Lipschitz in $x\in\mathcal{X}$, uniformly in $\theta\in\Theta$.}
\item{$h_{\theta}\in C(\mathcal{X})$, is locally bounded and globally Lipschitz in $x\in\mathcal{X}$, uniformly in $\theta\in\Theta$.}
\item{$X^{\delta}_{0}=X_{0}$ is a continuous random variable such that $\mathbb{E}|X_{0}|^{3}<\infty$.}
\item{The functions $h_{\theta},b_{\theta},\sigma_{\theta}$ are Lipschitz continuous in $\theta\in\Theta$ and $\Theta \subset\mathbb{R}^{d}$ is compact.}
\end{enumerate}
\end{condition}
\begin{remark}\label{R:RelaxedAssumptions}
A typical example of a process $X$ that satisfies Condition (\ref{A:Assumption1}) is the Ornstein-Uhlenbeck process of Example \ref{Ex:OU} that we present below. One can verify that our results also hold for certain degenerate processes, such as the square root process (CIR) of Example \ref{Ex:CIR} where $\sigma(x)=\sqrt{x}$, i.e., it  degenerates at $x=0$ but nevertheless it is ergodic; we do not analyze these special cases in this paper.
\end{remark}

For any function $f\in L^2(\mathcal{X},\mu_{\theta})$, denote its average with respect to the invariant measure $\mu_{\theta}(dx)$ as
\[
\bar f_\theta= \int_{\mathcal X}f(x)\mu_\theta(dx)\ .
\]
It is a well known result that $Y^{\delta}_{\cdot}$ converges in distribution in $C([0,T];\mathbb{R}^n)$ to the process $\overline{Y}_{\cdot}$
 (e.g. \cite{BLP,PardouxVeretennikov2}), where
\begin{equation}
\overline{Y}_{t}=\bar{h}_{\theta} t+ W_{t}.\label{Eq:LimitingModel}
\end{equation}
Actually, due to the fact that the observation process $Y^{\delta}_{t}$ has constant diffusion, Condition \ref{A:Assumption1} and the ergodic theorem guarantee that a stronger result holds for any $\theta\in\Theta$, i.e., for every $\varepsilon>0$
\begin{equation}
\mathbb{P}_\theta\left(\sup_{0\leq t\leq T}\left|Y^{\delta}_{t}-\overline{Y}_{t}\right|\geq \varepsilon \right)\rightarrow 0, \textrm{ as }\delta\downarrow 0\qquad\forall\theta\in\Theta.\label{Eq:MeanSquareConvergence}
\end{equation}

%%%%%%%%%%%%%%%%%%%%%%%%
\subsection{Spectral Decomposition}\label{SS:SpectralDecomposition}
A stronger assumption than Condition \ref{A:Assumption1} is that the operator $\mathcal L_\theta$ has a discrete spectrum with an orthonormal basis of eigenfunctions. Some of the theorems in this paper do not require such strong assumptions on the operator's spectrum (e.g. Theorems \ref{T:FilterConvergence1}, \ref{T:ConsisitencyReducedLikelihood} and \ref{T:CLTReducedLikelihood} do not rely on discrete spectrum and orthonormal eigenfunctions), but the proof of the CLT in Theorem \ref{thm:CLT} relies on $\mathcal L_\theta$'s spectrum having these properties.

The steps taken in proving Theorem \ref{thm:CLT} utilize the spectral expansion of functions $f\in L^{2}(\mathcal{X},\mu_{\theta})$ with respect to the eigenfunctions of the operator $\mathcal{L}_{\theta}$. We say that the class of operators $\{\mathcal L_\theta\}_{\theta\in\Theta}$ has a discrete spectrum if for each $\theta\in\Theta$ there are eignenvalues $(-\lambda_i^\theta)_{i=0,1,2,3,\dots}$ such that
\[0=\lambda_0^\theta>-\lambda_1^\theta\geq-\lambda_2^\theta\geq\dots\ .\]
For each $i\geq 0$ we denote the $ith$ eigenfunction as $\psi_i^\theta(x)$ such that
\[\mathcal L_\theta\psi_i^\theta = -\lambda_i^\theta\psi_i^\theta\]
and we assume for each $\theta\in\Theta$ that the eigenfunctions form an orthonormal basis of $L^2(\mathcal{X},\mu_{\theta})$ so that
\[\int \psi_i^\theta(x)\psi_j^\theta(x)\mu_\theta(dx) = 1_{[i=j]} ,\]
and any square-integrable function $f\in L^2(\mathcal{X},\mu_{\theta})$ can be written as
$f(x) = \sum_{i=0}^\infty \psi_i^\theta(x)\left<f,\psi_i^\theta\right>_\theta$, where $\left<f,\psi_i^\theta\right>_\theta = \int f(x') \psi_i^\theta(x')\mu_\theta(dx')$. Notice that because $\mathcal L_\theta$ is a differential operator and the spectral elements are assumed to be an orthonormal basis, we get that $\psi_0^\theta\equiv 1$. This means
\begin{equation}
\left<\psi_i^\theta,1\right>_\theta=\left<\psi_i^\theta, \psi_0^\theta\right>_\theta = 0\qquad\hbox{for }i=1,2,3,\dots\ .\label{Eq:OrthonormalBasis}
\end{equation}
Below we consider some examples of processes whose operators have discrete spectrum with an orthonormal basis of eigenfunctions:

\begin{example}\label{Ex:OU} A non-degenerate ergodic process with a discrete spectrum is the 1-dimensional Ornstein-Uhlenbeck (OU) process,
\[dX_t = \kappa(\theta - X_t)dt+\sigma\sqrt{2} dB_t\]
where $\theta\in\Theta\subset\mathbb R$ and $\sigma,\kappa>0$. The eigenvalues of $\mathcal L_\theta$ are $0,-1,-2,-3,\dots$, and the Hermite polynomials form an orthonormal basis. Moreover, this process is ergodic with invariant measure Gaussian and in particular $\mu_{\theta}(dx)=\sqrt{\frac{\kappa}{2\pi \sigma^2 }}e^{-\frac{\kappa(x-\theta)^{2}}{2\sigma^2}}dx$.
\end{example}
\begin{example}\label{Ex:CIR} A degenerate ergodic process with a discrete spectrum is the 1-dimensional Cox-Ingersol-Ross (CIR) process,
\[dX_t = \kappa(\theta - X_t)dt+\sqrt{2\sigma^2 X_t}dB_t\]
where $\theta\in\Theta\subset\mathbb R^+$ and $\kappa>0$. The eigenvalues of $\mathcal L_\theta$ are $0,-1,-2,-3,\dots$, and the (generalized) Laguerre polynomials form an orthonormal basis. Moreover,  if $\kappa \theta>\sigma^2$ then this process is ergodic with invariant measure, the measure for a gamma distribution and in particular $\mu_{\theta}(dx)=\frac{a^{\beta}}{\Gamma(\beta)}x^{\beta-1}e^{-a x}dx$, where $\Gamma(\cdot)$ is the gamma function, $a=\kappa/\sigma^2$ and $\beta=\kappa\theta/\sigma^2$. Even though this SDE does not satisfy Condition \ref{A:Assumption1}(ii)-(iii), the SDE has a unique strong solution which is ergodic and thus one expects the results of this paper to hold.
\end{example}
%\begin{example}\label{Ex:ReflectedBrownianMotion}Consider the interval $[0,\theta]\subset\mathbb R$ and $\theta$ is the unknown parameter.  A reflected Brownian motion is a process $X_t^\delta$ that behaves locally like Brownian motion in this interval. That is, $X_t^\delta\in[0,\theta]$ has infinitesimal generator $\frac{1}{\delta}\mathcal L_\theta$ where
%
%\[\mathcal L_\theta f(x) = \frac 12 f''(x) \hspace{.5cm}\hbox{if }x\in(0,\theta)\]
%for functions $f\in C\left([0,\theta]\right)$ such that $f'(0)=f'(\theta)=0$. As $\delta$ approaches zero, the distribution of $X_t^\delta$ approaches it's invariant state, which is the uniform distribution on $[0,\theta]$ so that the invariant density is $\mu_\theta(x) = \frac{1}{\theta}$ for $x\in[0,\theta]$. Here $\lambda_i^\theta = \frac 12\left(\frac{\pi i}{\theta}\right)^2$ and the orthonormal basis for this operator consists of the cosine functions,
%\[\psi_i^\theta(x) = \sqrt 2\cos\left(\frac{x\pi i}{\theta}\right)\qquad\hbox{for }x\in[0,\theta]\qquad\hbox{and }i=0,1,2,3,\dots\]
%\end{example}

We conclude with a multidimensional example.
\begin{example}\label{Ex:MultiDOU} A non-degenerate ergodic process with a discrete spectrum is the $m$-dimensional linear SDE,
\[dX_t = -A X_tdt+\Gamma dB_t\]
where $A$ is $m\times m$ positive definite and $\Gamma$ is a matrix of appropriate dimensions, such that $(A,\Gamma)$ is a controllable pair. This process is ergodic and its infinitesimal generator has discrete spectrum. The orthonormal basis can be constructed by taking products of the modified Hermite functions for each variable, see \cite{LiberzonBrockett,linetskyBook} for more details and analysis.
\end{example}

%%%%%%%%%%%%%%%%%%%%%%%%%
\subsection{Filtering Equations}\label{S:FilteringReview}
Our data is contained in the filtration generated by the observed path, which is the $\sigma$-algebra $\mathcal Y_t^\delta \doteq\mathcal F_t^{Y^\delta} = \sigma \{(Y_s^\delta)_{s\leq t}\}$. The filtration $\mathcal Y_t^\delta$ does not reveal  the true but unknown parameter value $\alpha\in\Theta$. However, we can compute a posterior distribution conditional on a given parameter value, and then perform further statistical inference such as maximum likelihood in order to estimate the true parameter value. For a general introduction to stochastic filtering we refer the reader to classical manuscripts, such as \cite{bainCrisan,Kallianpur,Kushner,Rozovskii}.

For any $\theta\in\Theta$ (and not just the true parameter value, $\alpha\in\Theta$, that has generated the data in $\mathcal Y_t^\delta$), let's define the exponential martingale $Z_T^{\delta,\theta}$ which gives a new measure $\mathbb{P}_{\theta}^{*}$ on $(\Omega,\mathcal{F})$, such that
\begin{equation}
\frac{d\mathbb{P}_{\theta}}{d\mathbb{P}^{*}_{\theta}}\doteq Z_T^{\delta,\theta}=\exp\left\{ \int_{0}^{T}h_{\theta}(X^{\delta}_{s})dY^{\delta}_{s}-\frac{1}{2}\int_{0}^{T}\left|h_{\theta}(X^{\delta}_{s})\right|^{2}ds   \right\}\ .\label{Eq:LogLikelihoodPreLimit}
\end{equation}
By Girsanov's theorem on the absolutely continuous change of measure in the space of trajectories in $\mathcal{C}([0,T],\mathbb R^m)$, the probability measures $\mathbb{P}_{\theta}$ and $\mathbb{P}^{*}_{\theta}$ are absolutely continuous with respect to each other, and the distribution of $X^\delta$ is the same under both $\mathbb{P}_{\theta}$ and $\mathbb{P}_{\theta}^{*}$. Furthermore,  the process $Y^\delta$ is a $\mathbb{P}_{\theta}^*$-Brownian motion independent of $X^\delta$, and $Z^{\delta,\theta}$ is a $\mathbb P_\theta^*$-martingale.

Next, for  $f:\mathcal X\rightarrow\mathbb R$ such that $\mathbb E_\theta^*|f(X_t^\delta)|^2<\infty$, we define the measure valued process $\phi^{\delta,\theta}_{t}$ acting on $f$ as
\begin{equation}\label{eq:expFormulation}
\phi^{\delta,\theta}_{t}[f] \doteq\mathbb{E}_{\theta}^{*}\left[ Z_t^{\delta,\theta}f(X^{\delta}_{t}) \Big |\mathcal{Y}_{t}^\delta\right]\ ,
\end{equation}
a process which, for $f\in C^{2}_{c}(\mathcal{X})$, is well known to be the unique solution (see \cite{rozovsky1992}) to the following equation:
\begin{eqnarray}
d\phi^{\delta,\theta}_{t}[f]&=&\frac{1}{\delta}\phi^{\delta,\theta}_{t}[\mathcal{L}_{\theta}f]  dt + \phi^{\delta,\theta}_{t}[h_{\theta} f] dY_t^\delta,\quad \mathbb{P}_{\theta}^{*} \textrm{-a.s.},  \quad \phi_0^{\theta}[f]=\mathbb E_{\theta}f(X_0^\delta)\label{Eq:Zakai}
 \end{eqnarray}
Equation \eqref{Eq:Zakai} is the Zakai equation for nonlinear filtering. In the literature, the term `filter' refers to a posterior measure on $X_t^\delta$ given $\mathcal Y_t^\delta$, and so $\phi_t^{\delta,\theta}$ is also a filter. Specifically, the process $\phi_t^{\delta,\theta}$ is an unnormalized probability measure with $\phi_t^{\delta,\theta}[1]$ being the likelihood function, and the maximizer of $\phi_t^{\delta,\theta}[1]$ is the maximum likelihood estimator (MLE). In other words, given the observation $(Y_s^\delta)_{s\leq t}$, the MLE is
\begin{equation}
\label{Eq:MLE1}
\theta_t^\delta\doteq \argmax_{\theta\in\Theta}\phi_t^{\delta,\theta}[1]\ .
\end{equation}
Furthermore, we can apply the Kalianpour-Striebel formula to obtain the normalized filter,
\begin{equation}
\label{Eq:KSformula}
\pi^{\delta,\theta}_{t}[f]\doteq\mathbb E_{\theta}\left[f(X_t^\delta)\Big|\mathcal Y_t^\delta\right]=\frac{\phi_t^{\delta,\theta}[f]}{\phi_t^{\delta,\theta}[1]}\quad \mathbb{P}_{\theta},\mathbb{P}_{\theta}^{*} \textrm{-a.s.}\ .
\end{equation}
An important case is $f(x)=x$ because $X_t^\delta$ is often tracked with the posterior mean, $\widehat X_t^{\delta,\theta}\doteq \mathbb E_{\theta}[X_t^\delta|\mathcal Y_t^\delta].$ The posterior mean can be given by the Kalman filter when $\sigma_{\theta}$ does not depend on $x$ and there is linearity in $x$ for both $h_{\theta}$ and $b_{\theta}$. Another important case is $f(x)=h_{\theta}(x)$ because of the \textit{innovations process},
\[\nu_t^{\delta,\theta}\doteq Y_t^\delta- \int_0^t\pi^{\delta,\theta}_{s}[h_\theta]ds\qquad\forall t\in[0,T]\ ,\]
(recall we assumed that $Y_0 = 0$). The process $\nu_t^{\delta,\theta}$ is a $\mathbb P_{\theta}$-Brownian motion under the filtration $\mathcal Y_t^\delta$, but will only be observable as Brownian motion if $\theta=\alpha$, i.e. when the true parameter value is taken. For suitable test functions $f:\mathcal X\rightarrow \mathbb R$, the innovation is used in the nonlinear Kushner-Stratonovich equation to describe the evolution of $\pi_t^{\delta,\theta}[f]$,
\begin{equation}
\label{Eq:Kushner}
d\pi_t^{\delta,\theta}[f] = \frac 1\delta \pi^{\delta,\theta}_{t}[\mathcal L_{\theta} f] dt+ \left(\pi^{\delta,\theta}_{t}[f h_{\theta}]-\pi^{\delta,\theta}_{t}[f]\pi^{\delta,\theta}_{t}[h_{\theta}]\right)d\nu_t^{\delta,\theta}\quad \mathbb{P}_{\theta}\textrm{-a.s.}\ . \end{equation}
The innovations Brownian motion will be used in later sections where we consider asymptotics of the log-likelihood function.

%%%%%%%%%%%%%%%%%%%%%%%%%%%%%%%%%%%%%%%%%%%%%%%%%%%%%%%%%%%
\section{Asymptotic Results of the Filter and of the Likelihood Function}\label{S:filterLimit}

In this section we establish some results on the filter's convergence. In Subsection \ref{SS:filterLimit} we use the convergence results found in \cite{ImkellerSriPerkowskiYeong2012} (see also \cite{ParkSriSowers2008, ParkRozovskySowers2010,ParkSriSowers2011,ImkellerSriPerkowskiYeong2012}) to prove convergence in probability of the filter for a class of unbounded test functions (e.g. for the eigenfunctions of the operator $\mathcal L_\theta$). Then, subsection \ref{SS:PreparatoryCLT} will use these results to derive a CLT for the log-likelihood function, which is the main result of the paper.

Consider the `averaged' exponentials
\begin{equation}
\bar{Z}_t^{\delta,\theta}\doteq \exp\left\{ \bar{h}_{\theta} Y_t^\delta-\frac{1}{2}\left|\bar{h}_{\theta}\right|^{2}t   \right\}\ ,\quad \bar{Z}_t^{\theta}\doteq \exp\left\{ \bar{h}_{\theta}\overline{Y}_t-\frac{1}{2}\left|\bar{h}_{\theta}\right|^{2}t   \right\}\ .\label{Eq:DefinitionsZ}
\end{equation}
In fact the solution to the Zakai equation of \eqref{Eq:Zakai} is close in mean square sense  to a limiting filter based on $\bar Z_T^{\delta,\theta}$. For $f\in\mathcal{C}^{2}_{c}(\mathcal{X})$, we define new posterior measures $\bar{\phi}^{\delta,\theta}_{t}[f]$ and $\bar{\phi}^{\theta}_{t}[f]$ which satisfy the stochastic evolution equations
\begin{eqnarray}
\label{Eq:avgZakai}
d\bar{\phi}^{\delta,\theta}_{t}[f]&=&\frac{1}{\delta}\bar{\phi}^{\delta,\theta}_{t}[\mathcal{L}_{\theta}f]dt+\bar{\phi}^{\delta,\theta}_t[f] \bar{h}_{\theta} dY^{\delta}_t, \quad \bar{\phi}^{\delta,\theta}_{0}[f]=\mathbb{E}_{\theta}\{f(X_{0}^{\delta})\}\\
\label{Eq:avgZakaiLimit}
d\bar{\phi}^{\theta}_{t}[f]&=& \bar{\phi}^{\theta}_t[f] \bar{h}_{\theta} d\overline Y_t\, \quad  \bar{\phi}^{\delta,\theta}_{0}[f]=\bar{f}_{\theta}.
 \end{eqnarray}
It is straightforward to verify with It\^o's lemma that the `average' Zakai equations  \eqref{Eq:avgZakai} and \eqref{Eq:avgZakaiLimit} have solutions
\begin{eqnarray}
\label{Eq:solAvgZakai}
\bar{\phi}^{\delta,\theta}_{t}[f]&=&\mathbb E_{\theta}^*\left[ f(X_{t}^{\delta}) \bar{Z}_t^{\delta,\theta}\Big|\mathcal Y_t^\delta\right] = \mathbb E_{\theta}[ f(X_{t}^{\delta}) ]\bar{Z}_t^{\delta,\theta}\ ,\\
\label{Eq:solAvgZakaiLimit}
\bar{\phi}^{\theta}_{t}[f]&=& \bar{f}_{\theta} \bar{Z}_t^{\theta}\
\end{eqnarray}
We also define $\bar{\pi}^{\delta,\theta}_{t}[f]=\frac{\bar{\phi}^{\delta,\theta}_{t}[f]}{\bar{\phi}^{\delta,\theta}_{t}[1]}=\mathbb E_{\theta} f(X_{t}^{\delta}) $ and $\bar{\pi}^{\theta}_{t}[f]=\frac{\bar{\phi}^{\theta}_{t}[f]}{\bar{\phi}^{\theta}_{t}[1]}=\bar{f}_{\theta}$.

\begin{remark}
The results of this section (namely Theorems \ref{T:FilterConvergence1} and \ref{thm:CLT} and Corollaries \ref{Lemma:likelihoodConvergence} and \ref{C:CLT}) will justify the approximation of $\phi^{\delta,\theta}[1]$ by $\bar{\phi}^{\delta,\theta}[1]$ for statistical inference purposes. Notice that $\bar{\phi}^{\delta,\theta}[1]$ is associated with the actual data, i.e., it is associated with $Y_t^\delta$ and not with $\overline{Y}_t$. $\overline{Y}_t$ is only used as a vehicle to obtain the necessary convergence results. Issues related with statistical inference are explored in Section \ref{S:ParameterEstimation}.
\end{remark}

\subsection{Convergence of the Filter and of the Likelihood Function}\label{SS:filterLimit}
At this point we need to impose an additional assumption on $Z^{\delta,\theta}_{t}$. In particular, we assume
\begin{condition}
\label{A:Assumption2}
For any $\theta\in\Theta$, there is a $q\in(1,\infty)$ such that
\[
\sup_{t\in[0,T]}\sup_{\delta\in(0,1)}\mathbb{E}_{\theta}^{*}|Z^{\delta,\theta}_{t}|^{q}+\sup_{t\in[0,T]}\sup_{\delta\in(0,1)}\mathbb{E}_{\theta}|Z^{\delta,\theta}_{t}|^{-q}<\infty.
\]
\end{condition}
Let us consider the $q\in(1,\infty)$ from Condition \ref{A:Assumption2} and let $p\in(1,\infty)$ be such that $1/q+1/p=1$.  Now let $\eta>2(p^{2}-1)$ and define the following class of test functions
\begin{equation}
\label{Eq:A}
\mathcal{A}_{\eta}^{\theta}\doteq\left\{f\in C^{4}(\mathcal X)\cap L^{2}(\mathcal{X},\mu_{\theta}): \sup_{t\in[0,T]}\sup_{\delta\in(0,1)}\mathbb{E}_{\theta}\left|f(X^{\delta}_{t})\right|^{2+\eta}<\infty, ~\right\}.
\end{equation}
Before stating the convergence results, we make some remarks related to Condition \ref{A:Assumption2} and the set $\mathcal{A}_{\eta}^{\theta}$.
\begin{remark}
Notice that because $\delta$ is a time scale, we could have written the definition in \eqref{Eq:A} with only a supremum over $t\geq 0$, and it would be an equivalent definition. That is, $X_t^\delta$ equals in distribution to $X_{t/\delta}^1$, so $\sup_{t\in[0,T]}\sup_{\delta\in(0,1)}\mathbb{E}_{\theta}\left|f(X^{\delta}_{t})\right|^{2+\eta}=\sup_{t\in[0,T]}\sup_{\delta\in(0,1)}\mathbb{E}_{\theta}\left|f(X^1_{t/\delta})\right|^{2+\eta}=\sup_{t\geq 0}\mathbb{E}_{\theta}\left|f(X^1_{t})\right|^{2+\eta}$.
\end{remark}
\begin{remark}
\label{R:novikovBoundedH}
Condition \ref{A:Assumption2} holds automatically for any finite $q>1$ if $ h_{\theta}(x)$ is bounded, e.g., Lemma 6.7 in \cite{ImkellerSriPerkowskiYeong2012}. Moreover, any $f\in\mathcal{C}_{b}^{4}(\mathcal{X})$ will also satisfy $f\in \mathcal{A}_{\eta}^{\theta}$ for any $\eta\geq 0$.
\end{remark}
\begin{remark}
\label{R:eigenfunctionsInA}
Suppose $X_0$ is distributed according to its invariant distribution. Then $\mathcal A_\eta^\theta$ consists of all functions $f\in C^4(\mathcal X)$ such that $\int |f(x)|^{2+\eta}\mu_\theta(x)dx<\infty$. However, the orthonormal basis of eigenfunctions $(\psi_i^\theta)_{i=0}^\infty$ associated with the operator $\mathcal L_\theta$ (as described in Section \ref{SS:SpectralDecomposition}) are not generally contained in $\mathcal A_\eta^\theta$ if $\eta>0$, but the examples given earlier qualify. %Indeed, Example \ref{Ex:ReflectedBrownianMotion} has $\psi_i^\theta\in\mathcal A_\eta^\theta$ for $\eta>0$,  because each $\psi_i^\theta$ is bounded.
Examples \ref{Ex:OU}, \ref{Ex:CIR}, and \ref{Ex:MultiDOU} also have $\psi_i^\theta\in\mathcal A_\eta^\theta$ for $\eta>0$, because  $\psi_i^\theta$ are polynomials with moments of all order, and so there are certainly $2+\eta$ moments of $\psi_i^\theta(X_t^\delta)$.
\end{remark}

The first result of this section holds without the assumption of spectral expansions, and is stated in the following theorem:
\begin{theorem}\label{T:FilterConvergence1}
Assume Conditions \ref{A:Assumption1} and \ref{A:Assumption2}. For any $\alpha,\theta\in\Theta$, we have that, uniformly in $t\in[0,T]$, the following are true:
\begin{enumerate}
\item{Let $f\in C^{4}_{b}(\mathcal{X})$. Then,  for every $\varepsilon>0$
\[
\lim_{\delta\downarrow 0}\mathbb{P}_{\alpha}\left( \left|\phi_t^{\delta,\theta}[f]-\bar\phi_t^{\delta,\theta}[f]\right| \geq \varepsilon \right)=0
\]
}
\item{Assume that there is $\eta>0$ such that $f\in\mathcal{A}_{\eta}^{\theta}$. Then, we have convergence of the filters in mean square
\[\lim_{\delta\downarrow 0}\mathbb E_{\alpha}\left|\pi_t^{\delta,\theta}[f]-\bar \pi_t^{\delta, \theta}[f]\right|^2=0\ . \]
and, moreover,
\[\lim_{\delta\downarrow 0}\left|\bar \pi_t^{\delta,\theta}[f]-\bar \pi_t^{\theta}[f]\right|=0\hspace{.5cm}\hbox{in $\mathbb P_\alpha$ probability}\ .\]}
\end{enumerate}
\end{theorem}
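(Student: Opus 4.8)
\emph{Proof strategy.} The plan is to prove the three assertions in turn, reducing the first two to the homogenization results of \cite{ImkellerSriPerkowskiYeong2012} (see also \cite{ParkSriSowers2008,ParkRozovskySowers2010,ParkSriSowers2011}) and the last to the ergodic estimates of \cite{PardouxVeretennikov2}. The device that lets one estimate under the true measure $\mathbb{P}_{\alpha}$ while the filter carries an arbitrary parameter $\theta$ is a decoupling: since $Y^{\delta}$ is a Brownian motion under every $\mathbb{P}^{*}_{\beta}$, the restriction of $\mathbb{P}^{*}_{\beta}$ to $\mathcal{Y}^{\delta}_{T}$ is Wiener measure and does not depend on $\beta$, so any $\mathcal{Y}^{\delta}_{t}$-measurable functional has the same law under $\mathbb{P}^{*}_{\alpha}$ as under $\mathbb{P}^{*}_{\theta}$. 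Combining this with the identity $\mathbb{E}_{\alpha}[G]=\mathbb{E}^{*}_{\alpha}[\phi^{\delta,\alpha}_{t}[1]\,G]$, valid for nonnegative $\mathcal{Y}^{\delta}_{t}$-measurable $G$ by the tower property and $\mathbb{E}^{*}_{\alpha}[Z^{\delta,\alpha}_{T}\mid\mathcal{Y}^{\delta}_{t}]=\phi^{\delta,\alpha}_{t}[1]$, together with H\"older's inequality and Condition~\ref{A:Assumption2}, I would convert every $\mathbb{P}_{\alpha}$-estimate of a functional of the $\theta$-filter into a Wiener-measure estimate, i.e.\ into the regime under $\mathbb{P}^{*}_{\theta}$ in which the cited homogenization results hold. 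I will also use repeatedly that under $\mathbb{P}^{*}_{\theta}$ the fast process $X^{\delta}$ is independent of $Y^{\delta}$ and has the same law as under $\mathbb{P}_{\theta}$, so that $\mathbb{E}^{*}_{\theta}|g(X^{\delta}_{t})|=\mathbb{E}_{\theta}|g(X^{1}_{t/\delta})|$ and ergodicity applies.

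For part~(i) with $f\in C^{4}_{b}(\mathcal{X})$, I would subtract the Zakai equations \eqref{Eq:Zakai} and \eqref{Eq:avgZakai} and write $\phi^{\delta,\theta}_{t}[h_{\theta}f]=\bar h_{\theta}\,\phi^{\delta,\theta}_{t}[f]+\phi^{\delta,\theta}_{t}[(h_{\theta}-\bar h_{\theta})f]$; the singular $O(1/\delta)$ drift and the oscillatory term $\phi^{\delta,\theta}_{t}[(h_{\theta}-\bar h_{\theta})f]$ are then absorbed using the corrector $\chi_{\theta}$ that solves the Poisson equation $\mathcal{L}_{\theta}\chi_{\theta}=h_{\theta}-\bar h_{\theta}$ (a solution of at most polynomial growth exists under Condition~\ref{A:Assumption1} by \cite{PardouxVeretennikov2}), It\^o's formula applied to $\chi_{\theta}(X^{\delta}_{\cdot})$, and the ergodic bound $\int_{0}^{t}\mathbb{E}_{\theta}|h_{\theta}(X^{\delta}_{s})-\bar h_{\theta}|^{2}\,ds=\delta\int_{0}^{t/\delta}\mathbb{E}_{\theta}|h_{\theta}(X^{1}_{u})-\bar h_{\theta}|^{2}\,du=O(\delta)$. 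This is precisely the chain of estimates carried out in \cite{ImkellerSriPerkowskiYeong2012}; once their conclusion is obtained under $\mathbb{P}^{*}_{\theta}$, the decoupling of the previous paragraph delivers the stated convergence in $\mathbb{P}_{\alpha}$-probability, uniformly in $t\in[0,T]$.

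For part~(ii) I would write
\[
\pi^{\delta,\theta}_{t}[f]-\bar\pi^{\delta,\theta}_{t}[f]
=\frac{\bigl(\phi^{\delta,\theta}_{t}[f]-\bar\phi^{\delta,\theta}_{t}[f]\bigr)\bar\phi^{\delta,\theta}_{t}[1]-\bar\phi^{\delta,\theta}_{t}[f]\bigl(\phi^{\delta,\theta}_{t}[1]-\bar\phi^{\delta,\theta}_{t}[1]\bigr)}{\phi^{\delta,\theta}_{t}[1]\,\bar\phi^{\delta,\theta}_{t}[1]},
\]
so that Condition~\ref{A:Assumption2}, which furnishes uniform-in-$\delta$ negative moments of $\phi^{\delta,\theta}_{t}[1]$ and hence controls the denominator, reduces the claim to $L^{2}(\mathbb{P}_{\alpha})$-bounds on the numerator differences. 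To pass from bounded test functions to $f\in\mathcal{A}^{\theta}_{\eta}$, I would truncate: pick $f_{N}\in C^{4}_{b}(\mathcal{X})$ with $f_{N}=f$ on $\{|f|\leq N\}$ and $|f_{N}|\leq|f|$, split $f=f_{N}+(f-f_{N})$, handle $f_{N}$ by part~(i) and the bounded-convergence theorem (the difference is then $\leq 2(N+1)$ and tends to $0$ in $\mathbb{P}_{\alpha}$-probability), and bound the remainder using conditional Jensen by $\mathbb{E}_{\alpha}|\pi^{\delta,\theta}_{t}[f-f_{N}]|^{2}\leq\mathbb{E}_{\alpha}\,\pi^{\delta,\theta}_{t}[(f-f_{N})^{2}]$, which after the decoupling and one more application of H\"older's inequality is dominated by a constant multiple of $\sup_{t,\delta}\mathbb{E}_{\theta}\bigl[|f(X^{\delta}_{t})|^{m}\,\indicator{|f(X^{\delta}_{t})|>N}\bigr]\leq C\,N^{m-(2+\eta)}$. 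The hypothesis $\eta>2(p^{2}-1)$ is exactly what forces the exponent $m$ produced by the H\"older bookkeeping to satisfy $m<2+\eta$, so the remainder is $o(1)$ as $N\to\infty$, uniformly in $\delta$ and $t$; letting $\delta\downarrow 0$ and then $N\to\infty$ finishes part~(ii). The final assertion is immediate: $\bar\pi^{\delta,\theta}_{t}[f]=\mathbb{E}_{\theta}f(X^{\delta}_{t})=\mathbb{E}_{\theta}f(X^{1}_{t/\delta})$ is deterministic and converges to $\bar f_{\theta}$ by the exponential ergodicity of $X^{1}$ (\cite{PardouxVeretennikov2}) as $t/\delta\to\infty$ (and equals $\bar f_{\theta}$ identically when $X_{0}\sim\mu_{\theta}$).

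The main obstacle is the homogenization hidden in part~(i): controlling the singular $\tfrac{1}{\delta}$-drift of the Zakai equation uniformly in $\theta\in\Theta$ and $t\in[0,T]$, for which the corrector/Poisson-equation machinery and the $L^{p}$-estimates of \cite{ImkellerSriPerkowskiYeong2012,PardouxVeretennikov2} do the heavy lifting. The genuinely new ingredients, and the other places where care is needed, are the decoupling that divorces the data measure $\mathbb{P}_{\alpha}$ from the filter parameter $\theta$ (this is what licenses inference at a wrong parameter value) and the H\"older/uniform-integrability bookkeeping in part~(ii), where one must keep the tail-moment exponent of $f(X^{\delta}_{t})$ below $2+\eta$ throughout --- the sole reason for the threshold $\eta>2(p^{2}-1)$ in the definition of $\mathcal{A}^{\theta}_{\eta}$.
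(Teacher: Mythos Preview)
Your proposal is correct and follows essentially the same route as the paper: the decoupling observation that $Y^{\delta}$-functionals have the same law under every $\mathbb{P}^{*}_{\beta}$, the H\"older chain with Condition~\ref{A:Assumption2} to transfer estimates from $\mathbb{P}_{\alpha}$ back to $\mathbb{P}^{*}_{\theta}$, invocation of the $L^{p}$ homogenization results of \cite{ImkellerSriPerkowskiYeong2012} for bounded $f$, the truncation $f_{N}$ with the tail bound $|f-f_{N}|^{2p^{2}}\le n^{2p^{2}-(2+\eta)}|f|^{2+\eta}$ to pass to $f\in\mathcal{A}^{\theta}_{\eta}$, and ergodicity for the last statement. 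The only cosmetic difference is that for bounded $f$ in part~(ii) the paper applies the H\"older chain directly to $|\pi^{\delta,\theta}_{t}[f]-\bar\pi^{\delta,\theta}_{t}[f]|^{2}$ and cites Corollary~6.9 of \cite{ImkellerSriPerkowskiYeong2012}, rather than going through your fraction decomposition together with part~(i) and bounded convergence.
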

\begin{proof} The proof of this theorem is in Appendix \ref{A:FilterConvergence1}. \end{proof}

In statistical inference, a useful corollary of Theorem \ref{T:FilterConvergence1} is the convergence of likelihood functions:
\begin{corollary}\label{Lemma:likelihoodConvergence}Assume  Conditions \ref{A:Assumption1} and \ref{A:Assumption2}. For any $\alpha,\theta\in\Theta$ and each $t\geq0$, we have
\[\phi_t^{\delta,\theta}[1]- \bar\phi_t^{\delta,\theta}[1]\rightarrow 0 \qquad\hbox{in  } \mathbb{P}_{\alpha}\hbox{-probability as }\delta\rightarrow 0\ .\]
\end{corollary}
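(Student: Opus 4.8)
The plan is to obtain this as the special case $f\equiv 1$ of part (i) of Theorem \ref{T:FilterConvergence1}, so the proof is essentially just an invocation of that theorem. First I would note that the constant function $f\equiv 1$ belongs to $C^{4}_{b}(\mathcal X)$ (it is smooth and bounded, with all derivatives vanishing) and that $\mathcal L_\theta 1=0$, so that $\phi^{\delta,\theta}_t[1]$ and $\bar\phi^{\delta,\theta}_t[1]$ are well defined through \eqref{Eq:Zakai} and \eqref{Eq:avgZakai}. Theorem \ref{T:FilterConvergence1}(i), applied with this $f$, then gives $\lim_{\delta\downarrow 0}\mathbb P_\alpha\big(|\phi_t^{\delta,\theta}[1]-\bar\phi_t^{\delta,\theta}[1]|\ge\varepsilon\big)=0$ for every $\varepsilon>0$, uniformly in $t\in[0,T]$; specialising to a fixed $t$ yields exactly the claimed convergence in $\mathbb P_\alpha$-probability. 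All of the analytic work ---  the homogenisation of $Y^\delta$, the ergodic averaging of $h_\theta(X^\delta_\cdot)$, and the moment control provided by Condition \ref{A:Assumption2} --- has already been carried out in the proof of Theorem \ref{T:FilterConvergence1}, so nothing further is needed.

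If a self-contained argument were wanted, an alternative route is to difference the two linear stochastic equations \eqref{Eq:Zakai} and \eqref{Eq:avgZakai} at $f\equiv 1$. Writing $D_t^\delta\doteq\phi_t^{\delta,\theta}[1]-\bar\phi_t^{\delta,\theta}[1]$ and using $\phi_t^{\delta,\theta}[h_\theta]=\pi_t^{\delta,\theta}[h_\theta]\,\phi_t^{\delta,\theta}[1]$ from \eqref{Eq:KSformula}, one finds that $D_t^\delta$ solves a linear SDE driven by $Y^\delta$ whose coefficient is $\pi_t^{\delta,\theta}[h_\theta]$ (a term proportional to $D_t^\delta$) plus a forcing term $(\pi_t^{\delta,\theta}[h_\theta]-\bar h_\theta)\,\bar\phi_t^{\delta,\theta}[1]$. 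A stochastic Gronwall / Burkholder-Davis-Gundy estimate, together with Condition \ref{A:Assumption2} for the integrability of $\phi^{\delta,\theta}[1]$ and $\bar\phi^{\delta,\theta}[1]$, would reduce the claim to showing that the forcing term tends to zero, i.e. to the filter convergence $\pi_t^{\delta,\theta}[h_\theta]\to\bar h_\theta$. That is Theorem \ref{T:FilterConvergence1}(ii) applied to $f=h_\theta$, which however needs $h_\theta\in\mathcal A_\eta^\theta$ (in particular $C^4$ with enough moments); since the standing hypotheses only give $h_\theta\in C(\mathcal X)$, this route is more technical and less general than simply quoting part (i) with $f\equiv 1$.

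There is essentially no obstacle here: the only points to check are that $f\equiv 1\in C^4_b(\mathcal X)$ --- immediate --- and that the uniform-in-$t$ convergence in Theorem \ref{T:FilterConvergence1}(i) implies the pointwise-in-$t$ statement of the corollary, which is trivial. Accordingly I would record the corollary as a one-line consequence of Theorem \ref{T:FilterConvergence1}(i), with the remark that it is precisely the case of the test function $f\equiv 1$ (equivalently, $\psi_0^\theta$), which is why it underlies the use of the reduced likelihood for statistical inference in Section \ref{S:ParameterEstimation}.
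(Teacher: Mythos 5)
Your proposal is correct and coincides with the paper's intent: the corollary is stated without a separate proof precisely because it is the case $f\equiv 1$ of Theorem \ref{T:FilterConvergence1}(i), and $f\equiv 1\in C^4_b(\mathcal X)$ makes the application immediate. The additional self-contained route you sketch is not needed and, as you note yourself, would impose stronger hypotheses on $h_\theta$ than the corollary assumes.
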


We note that results similar to Theorem \ref{T:FilterConvergence1} appear elsewhere in the literature, e.g., \cite{KleptsinaLiptserSerebrovski1997,Ichihara2004, ParkSriSowers2008, ParkRozovskySowers2010,ParkSriSowers2011,ImkellerSriPerkowskiYeong2012}, but with slightly different assumptions and set up. The main difference is that Theorem \ref{T:FilterConvergence1}, when compared to the previous works, states the convergence result under the measure parameterized by the true parameter value (i.e. the measure under which the observations are made, where $\theta=\alpha$) with the filters converging for \textit{any} parameter value. In other words, we will `observe' the
filters converging to the reduced filter. Moreover, the convergence of the filters in Theorem \ref{T:FilterConvergence1} is for test functions that belong to the space $\mathcal{A}_{\eta}^{\theta}$, which can include unbounded functions such as the eigenfunctions of the OU processes in Example \ref{Ex:OU} and \ref{Ex:MultiDOU} (see Remark \ref{R:eigenfunctionsInA}). By assuming that $\psi_i^\theta\in\mathcal A_\eta^\theta$ for some $\eta>0$, we are able to prove the results in Subsection \ref{SS:PreparatoryCLT}.

%%%%%%%%%%%%%%%%%%%%%%%%%%%%%%%%%%%%%%%%%%%%%%%%%%

\subsection{Asymptotic Normality of Likelihood Function}\label{SS:PreparatoryCLT}
We  proceed to the statement and proof of the CLT for the log-likelihood function. In particular, we find that the difference in the original  log-likelihood
minus the log-likelihood of reduced dimension, divided by  $\sqrt{\delta}$, yields a quantity that is asymptotically normal. In proving the CLT, we  make extensive use of the discrete spectrum and eigenfunction basis. In this section we shall also assume the following:
\begin{condition}\label{A:Assumption3}
For any $i,j\in\mathbb{N}$ and any $\theta\in\Theta$, we assume that
\begin{enumerate}
\item{There exists $C_h>0$ independent of $\theta$ such that $\|h_{\theta}\|_\infty \leq C_h$,}
\item $\mathcal L_{\theta}$ has discrete spectrum with orthonormal basis functions (as prescribed in Section \ref{SS:SpectralDecomposition}),
\item{There exists $\eta>0$ such that $\psi_{i}^{\theta}\in\mathcal{A}_{\eta}^\theta$, for all $\theta\in\Theta$ and $i\in\mathbb{N}$,}
\item{$\pi^{\theta}_{0}[\psi_{i}^{\theta}]<\infty$ for all $\theta\in\Theta$ and $i\in\mathbb{N}$.}
\end{enumerate}
\end{condition}
It is worth noting that Condition \ref{A:Assumption3} subsumes Condition \ref{A:Assumption2} because it places a bound on $h_\theta$ (see Remark \ref{R:novikovBoundedH}). Moreover, the assumption that $\mathcal L_{\theta}$ has discrete spectrum with orthonormal basis functions is useful because the Zakai equation for the eigenfunctions $\psi_i^{\theta}$ simplifies to
\begin{equation}
\label{eq:spectralZakai}
d\phi_t^{\delta,\theta}[\psi_i^{\theta}] =-\frac{\lambda_i^{\theta}}{\delta} \phi_t^{\delta,\theta}[\psi_i^{\theta}]dt+\phi_t^{\delta,\theta}[h_{\theta} \psi_i^{\theta}]dY^{\delta}_t .
\end{equation}
Applying It\^o's lemma to $\frac{\phi_t^{\delta,\theta}[\psi_i^{\theta}]}{\phi_t^{\delta,\theta}[1]}$ we have the Kushner-Stratonovich equation
\begin{equation}
\label{eq:KS}
d\left(\frac{\phi_t^{\delta,\theta}[\psi_i^{\theta}]}{\phi_t^{\delta,\theta}[1]}\right)=-\frac{\lambda_i^{\theta}}{\delta}\frac{\phi_t^{\delta,\theta}[\psi_i^{\theta}]}{\phi_t^{\delta,\theta}[1]}dt+\underbrace{\left(\frac{\phi_t^{\delta,\theta}[h_{\theta}\psi_i^{\theta}]}{\phi_t^{\delta,\theta}[1]}-\frac{\phi_t^{\delta,\theta}[h_{\theta}]\phi_t^{\delta,\theta}[\psi_i^{\theta}]}{\left(\phi_t^{\delta,\theta}[1]\right)^2} \right) }_{=cov^{\delta,\theta}\left(h_{\theta}(X_t^\delta),\psi_i^{\theta}(X_t^\delta)|\mathcal{Y}^{\delta}_{t}\right)}d\nu_t^{\delta,\theta}
\end{equation}
where $d\nu_t^{\delta,\theta} = dY^{\delta}_t - \frac{\phi_t^{\delta,\theta}[h_{\theta}]}{\phi_t^{\delta,\theta}[1]}dt=dY^{\delta}_t -\mathbb E^{\delta,\theta}[h_{\theta}(X^{\delta}_t)|\mathcal{Y}^{\delta}_{t}]dt$ is the innovations Brownian motion under $\mathbb{P}_{\theta}$. By Duhamel's  principle the solution is
\begin{equation}
\label{eq:KSsoln}
\frac{\phi_t^{\delta,\theta}[\psi_i^{\theta}]}{\phi_t^{\delta,\theta}[1]} = e^{-\frac{\lambda_i^{\theta} t}{\delta}}\phi_0^{\theta}[\psi_i^{\theta}]+\int_0^t e^{-\frac{\lambda_i^{\theta}(t-s)}{\delta}}cov^{\delta,\theta}\left(h_{\theta}(X^{\delta}_s),\psi_i^{\theta}(X^{\delta}_s)\Big|\mathcal{Y}^{\delta}_{s}\right)d\nu_s^{\delta,\theta}.
\end{equation}
Equivalently, we can write
\begin{equation}
\label{eq:KSsoln2}
\pi_t^{\delta,\theta}[\psi_i^{\theta}] = e^{-\frac{\lambda_i^{\theta} t}{\delta}}\pi_0^{\theta}[\psi_i^{\theta}]+\int_0^t e^{-\frac{\lambda_i^{\theta}(t-s)}{\delta}}
\left(\pi_s^{\delta,\theta}[h_{\theta}\psi_i^{\theta}]-\pi_s^{\delta,\theta}[h_{\theta}]\pi_s^{\delta,\theta}[\psi_i^{\theta}]\right)
d\nu_s^{\delta,\theta}.
\end{equation}
Equations \eqref{eq:KSsoln} and \eqref{eq:KSsoln2} are the key identities used to prove the CLT. However, there are some ergodic properties of the filter that are required to do the proof. Appendix \ref{A:lemmasAndProps} has these results; Section \ref{SS:CLT} states and proves the CLT.

%%%%%%%%%%%%%%%%%%%%%%%%%%%%%%%%%%%%
\subsubsection{Statement of CLT and Proof}\label{SS:CLT}
In this section, we quantify the estimation error which occurs if the reduced log-likelihood is used in place of the full version. In particular, we establish that the error in the log-likelihood function will be normally distributed with standard deviation of order $O(\sqrt\delta)$.

By Lemma 3.9 in \cite{bainCrisan} we have
\[\log\left( \phi_t^{\delta,\theta}[1]\right) = \int_0^t \pi_s^{\delta,\theta}[h_{\theta}]dY^{\delta}_s -\frac 12\int_0^t\left| \pi_s^{\delta,\theta}[h_{\theta}]\right|^2ds\ .\]
Let us write $\tilde{h}_{\theta}(x)=h_{\theta}(x)-\bar{h}_{\theta}$ and notice that $\left<\tilde{h}_{\theta},1\right>_{\theta}=0$. Then we write
\begin{align}
\frac{1}{\sqrt\delta}\left(\log\left( \phi_t^{\delta,\theta}[1]\right)-\log\left( \bar\phi_t^{\delta,\theta}[1]\right)\right)&= \frac{1}{\sqrt\delta}\left(\int_0^t\left( \pi_s^{\delta,\theta}[h_{\theta}]-\bar h_{\theta}\right)dY_s^\delta -\frac 12\left(\int_0^t\left| \pi_s^{\delta,\theta}[h_{\theta}]\right|^2ds-\int_0^t\left| \bar h_{\theta}\right|^2ds\right)\right)\nonumber\\
&=J^{\delta}_{1}+ J^{\delta}_{2}\ ,\nonumber
\end{align}
where we have defined $J_1^\delta$ and $J_2^\delta$ as
\begin{align}
J^{\delta}_{1}&=\frac{1}{\sqrt\delta}\int_0^t\left( \pi_s^{\delta,\theta}[h_{\theta}]-\bar h_{\theta}\right)dY_s^\delta=\frac{1}{\sqrt\delta}\int_0^t \pi_s^{\delta,\theta}[\tilde{h}_{\theta}]d\nu_s^{\delta,\theta}+
\frac{1}{\sqrt\delta}\int_0^t\left| \pi_s^{\delta,\theta}[\tilde h_{\theta}]\right|^2ds+\frac{\bar{h}_{\theta}}{\sqrt\delta}\int_0^t \pi_s^{\delta,\theta}[\tilde h_{\theta}]ds\nonumber
\end{align}
and
\begin{align}
J^{\delta}_{2}&=-\frac{1}{2\sqrt{\delta}}\left(\int_0^t\left| \pi_s^{\delta,\theta}[h_{\theta}]\right|^2ds-\int_0^t\left| \bar h_{\theta}\right|^2ds\right)=-\frac{1}{2\sqrt{\delta}}\int_0^t\left| \pi_s^{\delta,\theta}[\tilde{h}_{\theta}]\right|^2ds-\frac{\bar h_{\theta}}{\sqrt{\delta}}\int_0^t  \pi_s^{\delta,\theta}[\tilde{h}_{\theta}] ds\ .\nonumber
\end{align}
Hence, we obtain the representation
\begin{align}
\nonumber
&\frac{1}{\sqrt\delta}\left(\log\left( \phi_t^{\delta,\theta}[1]\right)-\log\left( \bar\phi_t^{\delta,\theta}[1]\right)\right)\\
\label{eq:logLderiv}
&=\underbrace{\int_0^t\frac{1}{\sqrt\delta}\left( \pi_s^{\delta,\theta}[\tilde h_{\theta}]\right)d\nu_s^{\delta,\alpha}}_{(*)}+\underbrace{\int_0^t\frac{1}{\sqrt\delta}\left( \pi_s^{\delta,\theta}[\tilde h_{\theta}]\right)\left(\pi_s^{\delta,\theta}[h_{\theta}]-\pi_s^{\delta,\alpha}[h_{\alpha}]\right)ds}_{(**)}+\underbrace{\frac{1}{2\sqrt\delta}\int_0^t\left| \pi_s^{\delta,\theta}[\tilde h_{\theta}]\right|^2ds}_{(\dagger)}\ ,
\end{align}
where $\nu_t^{\delta,\alpha}$ is a $\mathbb P_\alpha$ Brownian motion (i.e. it is Brownian motion under the true parameter), but not for $\mathbb P_\theta$ with $\theta\neq \alpha$. Now recall that by Condition \ref{A:Assumption3}, for every $i\in\mathbb{N}$ we have $\psi_i^\theta\in\mathcal{A}_{\eta}^{\theta}$. This implies that there exists finite constants that may depend on $i, T$ and $\theta$ such that
\begin{equation}
\sup_{\delta\in(0,1),\rho\in[0,T]}\mathbb E_\theta\left[|\psi_i^\theta(X_\rho^\delta)|^2\right]\leq C(\psi_{i},T,\theta)\ ,
\label{Eq:BoundedTerm01}
\end{equation}
from which we define another constant
\begin{equation}
C_{i,j}(T,\theta)\doteq\left(\frac{\left|\pi^{\theta}_{0}[\psi_{i}^{\theta}]\pi^{\theta}_{0}[\psi_{j}^{\theta}]\right|}{\lambda_i^{\theta}+\lambda_j^{\theta}}+
  \left(C(\psi_{i},T,\theta)+C(\psi_{j},T,\theta)\right)\left(\frac{1}{\lambda_i^{\theta}}+\frac{1}{\lambda_j^{\theta}}\right)\right)\ .\label{Eq:BoundedTerm0}
\end{equation}
If the infinite sum of these constants converges, then we can prove the following CLT for the log-likelihood function:

\begin{theorem}  \label{thm:CLT} \textbf{(Likelihood CLT).} Assume Conditions \ref{A:Assumption1} and \ref{A:Assumption3}. Moreover, assume that there exists constants $C(\psi_{i},T,\theta)$ that satisfy (\ref{Eq:BoundedTerm01}) such that for all $\theta\in\Theta$
\[
\sum_{i,j=1}^{\infty}|\left<h_{\theta},\psi_i^{\theta}\right>_{\theta}\left<h_{\theta},\psi_j^{\theta}\right>_{\theta}|C_{i,j}(T,\theta)<\infty\ ,
\]
where $C_{i,j}(T,\theta)$ is given by (\ref{Eq:BoundedTerm0}). Denote by
$u^{2}_{\theta}(h_{\theta})\doteq \sum_{i,j=1}^{\infty}\frac{\left<h_{\theta},\psi_i^{\theta}\right>_{\theta}\left<h_{\theta},\psi_j^{\theta}\right>_{\theta}\pi_0^{\theta}[\psi_i^{\theta}]\pi_0^{\theta}[\psi_j^{\theta}]}{\lambda_i^{\theta} +\lambda_j^{\theta}}<\infty $
and
$v^{2}_{\theta}(h_{\theta})\doteq \sum_{i,j=1}^{\infty}\frac{\left|\left<h_{\theta},\psi_i^{\theta}\right>_{\theta}\left<h_{\theta},\psi_j^{\theta}\right>_{\theta}\right|^{2}}{\lambda_i^{\theta}+\lambda_j^{\theta}}$, with $v_\theta^2(h_\theta)<\infty$ by Parseval's identity (see Remark \ref{R:AssumptionOnVariance}). Then, and under $\mathbb P_{\alpha}$, and for any fixed $t\in(0,T]$ we have
\[\frac{1}{\sqrt\delta}\left(\log\left( \phi^{\delta,\theta}_{t}[1]\right)-\log\left( \bar\phi^{\delta,\theta}_{t}[1]\right)\right)\Rightarrow\mathcal W\left(u^{2}_{\theta}(h_{\theta})+t v^{2}_{\theta}(h_{\theta})\right)\qquad\hbox{as }\delta\rightarrow 0 \]
in distribution, where $\mathcal{W}\left(u^{2}_{\theta}(h_{\theta})+t v^{2}_{\theta}(h_{\theta})\right)$ is a normal random variable with mean zero and variance $u^{2}_{\theta}(h_{\theta})+t v^{2}_{\theta}(h_{\theta})$.
\end{theorem}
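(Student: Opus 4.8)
The plan is to work from the exact decomposition \eqref{eq:logLderiv},
\[
\frac{1}{\sqrt\delta}\left(\log\phi_t^{\delta,\theta}[1]-\log\bar\phi_t^{\delta,\theta}[1]\right)=(*)+(**)+(\dagger),
\]
and to show that the stochastic--integral term $(*)$ carries the whole Gaussian limit while $(**)$ and $(\dagger)$ vanish in $\mathbb P_\alpha$--probability, so that Slutsky's lemma concludes. The engine is the spectral identity \eqref{eq:KSsoln2}: expanding $\tilde h_\theta=\sum_{i\geq 1}\langle h_\theta,\psi_i^\theta\rangle_\theta\,\psi_i^\theta$ in $L^2(\mathcal X,\mu_\theta)$ (legitimate by Condition \ref{A:Assumption3}(ii), the $i=0$ term being absent since $\langle\tilde h_\theta,1\rangle_\theta=0$) gives $\pi_s^{\delta,\theta}[\tilde h_\theta]=a_s^\delta+m_s^\delta$ with
\[
a_s^\delta\doteq\sum_{i\geq 1}\langle h_\theta,\psi_i^\theta\rangle_\theta\,e^{-\lambda_i^\theta s/\delta}\,\pi_0^\theta[\psi_i^\theta],\qquad
m_s^\delta\doteq\sum_{i\geq 1}\langle h_\theta,\psi_i^\theta\rangle_\theta\int_0^s e^{-\lambda_i^\theta(s-r)/\delta}\,\beta_{i,r}^{\delta,\theta}\,d\nu_r^{\delta,\theta},
\]
where $\beta_{i,r}^{\delta,\theta}\doteq\pi_r^{\delta,\theta}[h_\theta\psi_i^\theta]-\pi_r^{\delta,\theta}[h_\theta]\,\pi_r^{\delta,\theta}[\psi_i^\theta]$. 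Because $e^{-\lambda_i^\theta(s-r)/\delta}$ is supported on an $O(\delta)$ window in $r$, the ergodic filter estimates of Appendix \ref{A:lemmasAndProps} yield $\sup_{s\leq t}\mathbb E_\alpha|m_s^\delta|^2=O(\delta)$ and $\int_0^t|a_s^\delta|\,ds=O(\delta)$; the interchange of the infinite spectral sums with the time integrals and with $\mathbb E_\alpha$ is exactly what the hypothesis $\sum_{i,j}|\langle h_\theta,\psi_i^\theta\rangle_\theta\langle h_\theta,\psi_j^\theta\rangle_\theta|\,C_{i,j}(T,\theta)<\infty$ together with the moment bounds \eqref{Eq:BoundedTerm01} is tailored to license.

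Given these estimates, $(\dagger)$ and $(**)$ are disposed of quickly. From $a^\delta,m^\delta$ and Cauchy--Schwarz one gets $\mathbb E_\alpha\int_0^t|\pi_s^{\delta,\theta}[\tilde h_\theta]|^2\,ds=O(\delta)$, hence $\mathbb E_\alpha|(\dagger)|=O(\sqrt\delta)\to 0$. For $(**)$, split $\pi_s^{\delta,\theta}[h_\theta]-\pi_s^{\delta,\alpha}[h_\alpha]=(\pi_s^{\delta,\theta}[\tilde h_\theta]-\pi_s^{\delta,\alpha}[\tilde h_\alpha])+(\bar h_\theta-\bar h_\alpha)$; pairing the first bracket with $\delta^{-1/2}\pi_s^{\delta,\theta}[\tilde h_\theta]$ and using Cauchy--Schwarz with the $O(\delta)$ $L^2$--bounds yields an $O(\sqrt\delta)$ contribution, while the second bracket contributes $\frac{\bar h_\theta-\bar h_\alpha}{\sqrt\delta}\int_0^t\pi_s^{\delta,\theta}[\tilde h_\theta]\,ds$, which I would bound by a stochastic Fubini in \eqref{eq:KSsoln2}: $\int_0^t\pi_s^{\delta,\theta}[\psi_i^\theta]\,ds$ equals a deterministic $O(\delta)$ term plus $\frac{\delta}{\lambda_i^\theta}\int_0^t(1-e^{-\lambda_i^\theta(t-r)/\delta})\,\beta_{i,r}^{\delta,\theta}\,d\nu_r^{\delta,\theta}$, of $L^2(\mathbb P_\alpha)$--norm $O(\delta)$, so after summing against $\langle h_\theta,\psi_i^\theta\rangle_\theta$ the whole term is $O(\sqrt\delta)\to0$.

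The heart of the matter is $(*)=(*)_a+(*)_m$ with $(*)_a\doteq\frac{1}{\sqrt\delta}\int_0^t a_s^\delta\,d\nu_s^{\delta,\alpha}$ and $(*)_m\doteq\frac{1}{\sqrt\delta}\int_0^t m_s^\delta\,d\nu_s^{\delta,\alpha}$. The integrand $a_s^\delta$ is deterministic (it depends only on the prior, the eigenvalues and the eigencoefficients) and $\nu^{\delta,\alpha}$ is a $\mathbb P_\alpha$--Brownian motion for $\mathcal Y^\delta$, so $(*)_a$ is a Wiener integral, hence exactly $\mathcal N\!\left(0,\tfrac1\delta\int_0^t(a_s^\delta)^2\,ds\right)$ under $\mathbb P_\alpha$; since $\tfrac1\delta\int_0^t e^{-(\lambda_i^\theta+\lambda_j^\theta)s/\delta}ds\to(\lambda_i^\theta+\lambda_j^\theta)^{-1}$ one gets $\tfrac1\delta\int_0^t(a_s^\delta)^2ds\to u_\theta^2(h_\theta)$, so $(*)_a\Rightarrow\mathcal N(0,u_\theta^2(h_\theta))$. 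Next, $M_\tau^\delta\doteq\frac1{\sqrt\delta}\int_0^\tau m_s^\delta\,d\nu_s^{\delta,\alpha}$, $\tau\in[0,t]$, is a continuous $L^2(\mathbb P_\alpha)$--martingale with bracket $\langle M^\delta\rangle_\tau=\tfrac1\delta\int_0^\tau|m_s^\delta|^2ds$. The key convergence $\beta_{i,r}^{\delta,\theta}\to\langle h_\theta,\psi_i^\theta\rangle_\theta$ follows from Theorem \ref{T:FilterConvergence1}(ii) applied to the eigenfunction expansion of $h_\theta\psi_i^\theta\in L^2(\mathcal X,\mu_\theta)$ and to $\psi_i^\theta$ and $h_\theta$, with the $L^{2+\eta}$ control in $\mathcal A_\eta^\theta$ supplying uniform integrability; combined with the ergodic--averaging estimates of Appendix \ref{A:lemmasAndProps} one then shows $\langle M^\delta\rangle_s\to s\,v_\theta^2(h_\theta)$ in $\mathbb P_\alpha$--probability for every $s\in[0,t]$, a deterministic and continuous limit, whence the martingale central limit theorem for continuous martingales (the Lindeberg condition being vacuous) gives $(*)_m=M_t^\delta\Rightarrow\mathcal N(0,t\,v_\theta^2(h_\theta))$. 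Finally $\langle(*)_a,(*)_m\rangle_t=\tfrac1\delta\int_0^t a_s^\delta m_s^\delta\,ds$ satisfies $\mathbb E_\alpha|\langle(*)_a,(*)_m\rangle_t|\leq\tfrac1\delta\int_0^t|a_s^\delta|\,(\mathbb E_\alpha|m_s^\delta|^2)^{1/2}ds=O(\delta^{-1})\,O(\delta)\,O(\delta^{1/2})\to 0$, so $(*)_a$ and $(*)_m$ are asymptotically independent and $(*)=(*)_a+(*)_m\Rightarrow\mathcal N(0,u_\theta^2(h_\theta)+t\,v_\theta^2(h_\theta))$, which is the assertion.

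I expect the main obstacle to be the bracket convergence $\langle M^\delta\rangle_s\to s\,v_\theta^2(h_\theta)$. It requires: (i) a uniform tail control on the spectral series so that $\sum_i$ can be exchanged with $\int_0^s$ and with $\mathbb E_\alpha$, which is precisely the role of the summability of $C_{i,j}(T,\theta)$ together with the moment bounds $C(\psi_i,T,\theta)$; (ii) a quantitative version of $\beta_{i,r}^{\delta,\theta}\to\langle h_\theta,\psi_i^\theta\rangle_\theta$, in a time--averaged $L^1(\mathbb P_\alpha)$ sense with uniform integrability, strong enough to survive multiplication by the $O(1)$ factor $\tfrac1\delta\int_0^t\!\int_0^s e^{-(\lambda_i^\theta+\lambda_j^\theta)(s-r)/\delta}\,dr\,ds$; and (iii) the fact that under $\mathbb P_\alpha$ the process $\nu^{\delta,\theta}$ is not a martingale but carries the bounded drift $\pi_s^{\delta,\alpha}[h_\alpha]-\pi_s^{\delta,\theta}[h_\theta]$, so each stochastic--integral estimate must absorb an $O(1)$ drift correction---harmless for the order counting but demanding care when pinning down the exact limit. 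These are exactly the ergodic filter estimates collected in Appendix \ref{A:lemmasAndProps}.
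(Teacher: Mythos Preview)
Your overall architecture matches the paper's: the same decomposition \eqref{eq:logLderiv}, the same spectral expansion of $\pi_s^{\delta,\theta}[\tilde h_\theta]$ via \eqref{eq:KSsoln2}, and the same mechanism for disposing of $(\dagger)$ and $(**)$. The treatment of those two terms is essentially the paper's Lemmas \ref{L:UsedForTightness} and \ref{L:MismatchTerm}, and your stochastic Fubini for $\int_0^t\pi_s^{\delta,\theta}[\psi_i^\theta]\,ds$ is the right idea.

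The gap is in the bracket convergence $\langle M^\delta\rangle_s\to s\,v_\theta^2(h_\theta)$ \emph{in probability}. You invoke ``the ergodic--averaging estimates of Appendix \ref{A:lemmasAndProps}'', but those lemmas (\ref{L:UsedForTightnessPsi}, \ref{L:UsedForTightness}) are \emph{expectation} bounds: they give $\sup_\delta\mathbb E_\alpha\bigl[\tfrac1\delta\int_0^T|\pi_s^{\delta,\theta}[\tilde h_\theta]|^2\,ds\bigr]<\infty$ and the limit of $\tfrac1\delta\mathbb E_\alpha[\pi_s^{\delta,\theta}[\psi_i^\theta]\pi_s^{\delta,\theta}[\psi_j^\theta]]$, not a pathwise law of large numbers for $\tfrac1\delta\int_0^t|m_s^\delta|^2\,ds$. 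Filter convergence $\beta_{i,r}^{\delta,\theta}\to\langle h_\theta,\psi_i^\theta\rangle_\theta$ and the drift correction (your (ii),(iii)) only reduce $m_s^\delta$ to $\sum_i\langle h_\theta,\psi_i^\theta\rangle_\theta^2\sqrt\delta\,\Xi_s^{\delta,i}$ with $\Xi_s^{\delta,i}=\tfrac{1}{\sqrt\delta}\int_0^s e^{-\lambda_i^\theta(s-r)/\delta}\,d\nu_r^{\delta,\alpha}$; you still need $\int_0^t\Xi_s^{\delta,i}\Xi_s^{\delta,j}\,ds\to t/(\lambda_i^\theta+\lambda_j^\theta)$ in probability. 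The paper gets this not from filter estimates but by recognizing $(\Xi^{\delta,i},\Xi^{\delta,j})$ as a jointly Gaussian, jointly ergodic Ornstein--Uhlenbeck pair and applying the ergodic theorem to it (equations \eqref{Eq:OUSDE1}--\eqref{Eq:OUSDE2} and the $J^{2,\delta}$ computation). That is the missing idea in your sketch.

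A secondary looseness: deducing ``asymptotic independence'' of $(*)_a$ and $(*)_m$ from the cross--bracket tending to zero is not automatic, because the limiting bracket of $(*)_a$ is $u_\theta^2(h_\theta)\indicator{s>0}$, discontinuous at $s=0$, so the functional martingale CLT does not apply to the pair. The paper sidesteps this by treating $R^{1,\delta}+R^{4,\delta}$ together: it uses the DDS time--change to write the sum as $\mathcal W(\hbox{bracket})$ and then shows the \emph{total} bracket converges in probability, for each fixed $t>0$, to the deterministic constant $u_\theta^2(h_\theta)+t\,v_\theta^2(h_\theta)$.
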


If $X_0$ starts in its invariant distribution, then $C(\psi_{i},T,\theta) = 1$ and $\pi^{\theta}_{0}[\psi_{i}^{\theta}]=0$ for all $i\geq 0$ and $C_{i,j}(T,\theta) = \frac{2}{\lambda_i^{\theta}}+\frac{2}{\lambda_j^{\theta}}$ for all $i,j\geq 0$, and we have the following corollary from Theorem \ref{thm:CLT}:
\begin{corollary}\textbf{(Likelihood CLT for Paths).}\label{C:CLT}
Assume Conditions \ref{A:Assumption1} and \ref{A:Assumption3}. Moreover, assume that  for all $\theta\in\Theta$
\[
\sum_{i,j=1}^{\infty}|\left<h_{\theta},\psi_i^{\theta}\right>_{\theta}\left<h_{\theta},\psi_j^{\theta}\right>_{\theta}|\left(\frac{1}{\lambda_i^{\theta}}+\frac{1}{\lambda_j^{\theta}}\right)<\infty\ .
\]
If $X_{0}$ is distributed according to the invariant measure $\mu_\theta$ (i.e. $\pi_0^\theta[f] = \bar f_\theta$ for all $f\in\mathcal A_\eta^\theta$ and any $\theta\in\Theta$), then under $\mathbb P_\alpha$ we have
\[\frac{1}{\sqrt\delta}\left(\log\left( \phi^{\delta,\theta}_{\cdot}[1]\right)-\log\left( \bar\phi^{\delta,\theta}_{\cdot}[1]\right)\right)\Rightarrow\sqrt{v^{2}_{\theta}(h_{\theta})}\mathcal W\left(\cdot\right)\qquad\hbox{as }\delta\rightarrow 0 \]
in distribution on $C([0,T],\mathbb R)$, where $\mathcal W$ is a Brownian motion, and $v_\theta^2(h_\theta)$ is as defined in Theorem \ref{thm:CLT}.
\end{corollary}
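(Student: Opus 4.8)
The corollary will follow from Theorem \ref{thm:CLT} by two additions: observing that the invariant initialisation kills the $u_\theta^2$ term, and upgrading the one-dimensional convergence to convergence of the whole path in $C([0,T],\mathbb R)$. The plan is to isolate, out of the decomposition \eqref{eq:logLderiv}, a continuous $\mathbb P_\alpha$-martingale whose predictable bracket converges to a deterministic linear function, and to show that the remaining two terms of \eqref{eq:logLderiv} vanish uniformly in $t$. Concretely, write $\frac{1}{\sqrt\delta}(\log\phi_t^{\delta,\theta}[1]-\log\bar\phi_t^{\delta,\theta}[1])=M_t^{\delta}+(**)_t+(\dagger)_t$, where $(**)$ and $(\dagger)$ are the two $ds$-terms of \eqref{eq:logLderiv} and $M_t^{\delta}$ is the stochastic-integral term $(*)=\frac{1}{\sqrt\delta}\int_0^t\pi_s^{\delta,\theta}[\tilde h_\theta]\,d\nu_s^{\delta,\alpha}$. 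Since $\nu^{\delta,\alpha}$ is a $\mathbb P_\alpha$-Brownian motion for $(\mathcal Y_t^\delta)$, $\pi_s^{\delta,\theta}[\tilde h_\theta]$ is $\mathcal Y_s^\delta$-measurable, and $|\pi_s^{\delta,\theta}[\tilde h_\theta]|\le 2C_h$ by Condition \ref{A:Assumption3}(i), each $M^{\delta}$ is a continuous square-integrable $\mathbb P_\alpha$-martingale with $M_0^{\delta}=0$ and $\langle M^{\delta}\rangle_t=\frac{1}{\delta}\int_0^t|\pi_s^{\delta,\theta}[\tilde h_\theta]|^2\,ds$. Finally, because $X_0\sim\mu_\theta$ one has $\pi_0^{\theta}[\psi_i^{\theta}]=\langle\psi_i^{\theta},\psi_0^{\theta}\rangle_\theta=0$ for $i\ge1$ by \eqref{Eq:OrthonormalBasis}, so the transient terms $e^{-\lambda_i^{\theta}s/\delta}\pi_0^{\theta}[\psi_i^{\theta}]$ disappear from \eqref{eq:KSsoln2}; thus $u_\theta^2(h_\theta)=0$, the summability hypothesis of the corollary coincides with that of Theorem \ref{thm:CLT} (by the observation stated just before the corollary, with $C_{i,j}(T,\theta)=\tfrac{2}{\lambda_i^\theta}+\tfrac{2}{\lambda_j^\theta}$), and the fixed-time limit is the centered Gaussian with variance $t\,v_\theta^2(h_\theta)$, matching the claimed Brownian limit.

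I would first dispose of the remainders, showing $(**)\to0$ and $(\dagger)\to0$ in $C([0,T],\mathbb R)$ in $\mathbb P_\alpha$-probability. The term $(\dagger)_t=\frac{1}{2\sqrt\delta}\int_0^t|\pi_s^{\delta,\theta}[\tilde h_\theta]|^2\,ds$ is nondecreasing in $t$, so $\sup_{0\le t\le T}(\dagger)_t=(\dagger)_T$, and $\mathbb E_\alpha(\dagger)_T=O(\sqrt\delta)$ once one knows $\mathbb E_\alpha|\pi_s^{\delta,\theta}[\tilde h_\theta]|^2=O(\delta)$ uniformly in $s$; the latter follows from \eqref{eq:KSsoln2} (transient killed, spectral gap $\lambda_1^\theta>0$) after transferring the estimate from $\mathbb P_\theta$, under which $\nu^{\delta,\theta}$ is Brownian, to $\mathbb P_\alpha$ via the $L^q$-integrability of $Z^{\delta,\theta}$ and $(Z^{\delta,\theta})^{-1}$ in Condition \ref{A:Assumption2} (which Condition \ref{A:Assumption3} subsumes), and this is exactly the estimate used in the proof of Theorem \ref{thm:CLT}. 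The term $(**)_t=\pm\frac{1}{\sqrt\delta}\int_0^t\pi_s^{\delta,\theta}[\tilde h_\theta]\big(\pi_s^{\delta,\theta}[h_\theta]-\pi_s^{\delta,\alpha}[h_\alpha]\big)\,ds$ is not monotone, but its integrand is bounded by $2C_h|\pi_s^{\delta,\theta}[\tilde h_\theta]|/\sqrt\delta$, so Cauchy--Schwarz gives, for $0\le t'\le t\le T$,
\[
\left|(**)_t-(**)_{t'}\right|\ \le\ 2C_h\,|t-t'|^{1/2}\left(\frac{1}{\delta}\int_0^T\left|\pi_s^{\delta,\theta}[\tilde h_\theta]\right|^2ds\right)^{1/2}\ =\ 2C_h\,|t-t'|^{1/2}\left(\langle M^{\delta}\rangle_T\right)^{1/2}\ .
\]
Since $\langle M^{\delta}\rangle_T=O_p(1)$ (it converges to $T\,v_\theta^2(h_\theta)$, see below), the family $\{(**)\}_\delta$ is tight in $C([0,T],\mathbb R)$ with a uniform Hölder-$\tfrac12$ modulus, and since $(**)_t\to0$ in $\mathbb P_\alpha$-probability for each fixed $t$ (part of the proof of Theorem \ref{thm:CLT}), it follows that $(**)\to0$ in $C([0,T],\mathbb R)$.

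It then remains to prove $M^{\delta}\Rightarrow\sqrt{v_\theta^2(h_\theta)}\,\mathcal W$ in $C([0,T],\mathbb R)$, $\mathcal W$ a standard Brownian motion. Since each $M^{\delta}$ is a continuous martingale, a standard functional martingale central limit theorem for continuous martingales — the process-level counterpart of the fixed-time CLT argument used to prove Theorem \ref{thm:CLT}, with no Lindeberg/big-jump condition needed — applies once one verifies $\langle M^{\delta}\rangle_t=\frac{1}{\delta}\int_0^t|\pi_s^{\delta,\theta}[\tilde h_\theta]|^2ds\to t\,v_\theta^2(h_\theta)$ in $\mathbb P_\alpha$-probability for each fixed $t$, the limit being deterministic, continuous, and nondecreasing. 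This pointwise bracket convergence is precisely the $u_\theta^2=0$ specialization of the limit $\langle M^{\delta}\rangle_t\to u_\theta^2(h_\theta)+t\,v_\theta^2(h_\theta)$ underlying Theorem \ref{thm:CLT}, obtained by combining the spectral expansion $\pi_s^{\delta,\theta}[\tilde h_\theta]=\sum_{i\ge1}\langle h_\theta,\psi_i^\theta\rangle_\theta\,\pi_s^{\delta,\theta}[\psi_i^\theta]$ of \eqref{eq:KSsoln2}, the ergodic averaging $\pi_s^{\delta,\theta}[h_\theta\psi_i^\theta]-\pi_s^{\delta,\theta}[h_\theta]\pi_s^{\delta,\theta}[\psi_i^\theta]\to\langle h_\theta,\psi_i^\theta\rangle_\theta$ from Appendix \ref{A:lemmasAndProps}, the fast-scale averaging $\frac{1}{\delta}\int_0^t e^{-(\lambda_i^\theta+\lambda_j^\theta)(t-s)/\delta}ds\to(\lambda_i^\theta+\lambda_j^\theta)^{-1}$, and the summability hypothesis. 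Combining the functional martingale CLT for $M^{\delta}$ with $(**)+(\dagger)\to0$ in $C([0,T],\mathbb R)$ and the converging-together (Slutsky) lemma in $C([0,T],\mathbb R)$ yields the conclusion.

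The step I expect to be the real work is the bracket convergence $\langle M^{\delta}\rangle_t\to t\,v_\theta^2(h_\theta)$: this is where the spectral expansion, the ergodic averaging of the conditional covariances, and the summability assumption all have to be combined, and, crucially, it has to be carried out under $\mathbb P_\alpha$ rather than $\mathbb P_\theta$ — which forces one to push the $L^q$-bounds of Condition \ref{A:Assumption2} through the Radon--Nikodym change of measure. The uniform-in-$t$ control of $(**)$ is a secondary point, but it does require the tightness argument above and not a crude bound, since $\frac{1}{\sqrt\delta}\int_0^T|\pi_s^{\delta,\theta}[\tilde h_\theta]|ds$ is only $O_p(1)$ — not $o_p(1)$ — so the needed decay is recovered only by pairing equicontinuity with fixed-$t$ convergence to zero.
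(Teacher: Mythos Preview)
Your proposal is correct and follows essentially the same route as the paper: decompose via \eqref{eq:logLderiv}, show the $(\dagger)$ and $(**)$ terms vanish uniformly in $t$, and apply the functional martingale CLT (Ethier--Kurtz) to the continuous martingale $M^\delta=I^\delta$ once its bracket converges pointwise to $t\,v_\theta^2(h_\theta)$; the invariant initialisation kills $\pi_0^\theta[\psi_i^\theta]$, hence $u_\theta^2(h_\theta)=0$ and $J^{1,\delta}=J^{3,\delta}=0$, leaving only the $J^{2,\delta}$ limit from the proof of Theorem~\ref{thm:CLT}.

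One small difference worth noting: your tightness-plus-fixed-$t$ argument for $(**)$ is sound but unnecessary, because Lemma~\ref{L:MismatchTerm} (invoked in the proof of Theorem~\ref{thm:CLT}) already gives $\mathbb E_\alpha\sup_{t\in[0,T]}|(**)_t|\to 0$ directly; the paper simply cites this. Also, for the bracket convergence the paper does not compute $\langle I^\delta\rangle_t$ directly from the spectral expansion of $|\pi_s^{\delta,\theta}[\tilde h_\theta]|^2$ as you sketch, but instead infers it from the $R$-decomposition: $R^{2,\delta},R^{3,\delta}\to 0$ in $L^2$ uniformly (Lemmas~\ref{L:R2convergence}, \ref{L:R3convergence}) and the bracket of $R^{4,\delta}$ is handled via the ergodicity of the auxiliary OU processes $\Xi^{\delta,i}$. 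Your direct approach is equivalent in content and arguably cleaner, but note that to make it rigorous you would still need the dominated-convergence control that the $R$-lemmas provide.
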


Before continuing with the proofs of the Theorem \ref{thm:CLT} and Corollary \ref{C:CLT}, we make some remarks related to the conditions that appear in the statement of the CLT.

\begin{remark}\label{R:AssumptionOnVariance}
The orthonormal basis of eigenfunctions that was assumed in Condition \ref{A:Assumption3} is enough to ensure that the variance $v_\theta^2(h_\theta)<\infty$. Indeed, by Parseval's identity we have
\begin{align*}v^{2}_{\theta}(h_{\theta})&= \sum_{i,j=1}^{\infty}\frac{\left|\left<h_{\theta},\psi_i^{\theta}\right>_{\theta}\left<h_{\theta},\psi_j^{\theta}\right>_{\theta}\right|^{2}}{\lambda_i^{\theta}+\lambda_j^{\theta}}\leq \frac{1}{2\lambda_1^\theta}\sum_{i,j=1}^{\infty}\left|\left<h_{\theta},\psi_i^{\theta}\right>_{\theta}\left<h_{\theta},\psi_j^{\theta}\right>_{\theta}\right|^{2}\\
& = \frac{1}{2\lambda_1^\theta}\left(\sum_{i=1}^{\infty}\left|\left<h_{\theta},\psi_i^{\theta}\right>_{\theta}\right|^{2}\right)^2=\frac{1}{2\lambda_1^\theta}\left(\int_{\mathcal{X}} |h_\theta(x)|^2\mu_\theta(dx)\right)^2 = \frac{\left\|h_{\theta}\right\|^{4}_{L^{2}(\mathcal{X},\mu_{\theta})}}{2\lambda_1^\theta}<\frac{C_h^4}{2\lambda_1^\theta}<\infty \ ,
\end{align*}
where $C_h$ is the constant from Condition \ref{A:Assumption3}. Finiteness of $u_\theta^2(h_\theta)$ follows from equation \eqref{Eq:uthetaLimit} in the proof of Theorem \ref{thm:CLT}.
\end{remark}

\begin{remark}\textbf{(Absolutely Summable $\mathbf{h_\theta}$).}
The function $h_\theta(x)$ is said to be an \textit{absolutely summable} function if
\[\sum_{i=0}^\infty|\left<h_\theta,\psi_i^\theta\right>_\theta|<\infty\ .\]
Absolute summability is sufficient for Corollary \ref{C:CLT} to hold. Indeed, notice that
\[
\sum_{i,j=1}^{\infty}|\left<h_{\theta},\psi_i^{\theta}\right>_{\theta}\left<h_{\theta},\psi_j^{\theta}\right>_{\theta}|\left(\frac{1}{\lambda_i^{\theta}}+\frac{1}{\lambda_j^{\theta}}\right)\leq \frac{2}{\lambda_1^{\theta}} \left(\sum_{i=1}^{\infty}|\left<h_{\theta},\psi_i^{\theta}\right>_{\theta}|\right)^{2}.
\]
A similar treatment applies to the more general summability constraint that appears in Theorem \ref{thm:CLT}. For more on functions whose eigen-coefficients decay fast enough to ensure absolute convergence, see the conditions/examples given in \cite{boyd2000,boyd1984}.
\end{remark}

\begin{remark}\textbf{(Converging Initial Distributions).}
Corollary \ref{C:CLT} could be generalized to the case where the initial distribution depends on $\delta$ and converges to the invariant distribution. That is, assuming a priori the limit
\[\pi_0^{\delta,\theta}[f]\rightarrow \bar f_\theta\qquad\hbox{as $\delta\rightarrow 0$, $\forall \theta\in\Theta$}\ ,\forall f\in\mathcal A_\eta^\theta\ ,\]
then one expects that the same path-wise limit remains as stated in the corollary. However, generalization of the proofs in this paper will require verification that the initial filters $\pi_0^{\delta,\theta}$ satisfy equation \eqref{Eq:BoundedTerm01} and allow for the limit to pass into the sum in equation \eqref{Eq:uthetaLimit}.
\end{remark}

\begin{remark}
We could also combine Theorem \ref{thm:CLT} and Corollary \ref{C:CLT} by writing
\[\frac{1}{\sqrt\delta}\left(\log\left( \phi^{\delta,\theta}_{\cdot}[1]\right)-\log\left( \bar\phi^{\delta,\theta}_{\cdot}[1]\right)-\sqrt \delta R_\cdot^{1,\delta}\right)\Rightarrow\sqrt{v^{2}_{\theta}(h_{\theta})}\mathcal W\left(\cdot\right)\qquad\hbox{as }\delta\rightarrow 0 \]
in $C([0,T],\mathbb R)$, where $R_t^{1,\delta}$ is given by equation \eqref{Eq:ForLastTerm} in the proof of Theorem \ref{thm:CLT}.
\end{remark}

\begin{proof} [Proof of Theorem \ref{thm:CLT}] The proof of this theorem involves showing that $(\dagger)$ and $(**)$ from equation \eqref{eq:logLderiv} converge to zero in probability uniformly in $t\in[0,T]$, and then showing that $(*)$ converges weakly to the appropriate normal distribution. Then, the result follows by Slutzky's theorem (see \cite{Billingsley}).

First we consider the term $(\dagger)$. By Lemma \ref{L:UsedForTightness} we have that there exists a constant $C<\infty$ such that
\[
\sup_{\delta\in(0,1)}\mathbb{E}_{\alpha}\sup_{t\in[0,T]}\int_{0}^{t}\left[\frac{1}{\delta}\left|\pi_s^{\delta,\theta}[\tilde h_{\theta}]\right|^{2}\right]ds
\leq\sup_{\delta\in(0,1)}\mathbb{E}_{\alpha}\int_{0}^{T}\left[\frac{1}{\delta}\left|\pi_s^{\delta,\theta}[\tilde h_{\theta}]\right|^{2}\right]ds<C
\]
Therefore, the conclusion
\[
\lim_{\delta\downarrow 0}\mathbb{E}_{\alpha}\frac{1}{2\sqrt\delta}\sup_{t\in[0,T]}\int_0^t\left| \pi_s^{\delta,\theta}[\tilde h_{\theta}]\right|^2ds=0
\]
follows, implying the claimed convergence of the term  $(\dagger)$ in $\mathbb{P}_{\alpha}$-probability, uniformly in $t\in[0,T]$. Convergence to zero in $\mathbb{P}_{\alpha}$-probability of the $(**)$ term follows by Lemma \ref{L:MismatchTerm}.

Now we turn our attention toward $(*)$, and define the integrated process,
\begin{align*}
I_{t}^{\delta} &\doteq\int_0^t\frac{1}{\sqrt\delta}\left( \pi_s^{\delta,\theta}[\tilde h_{\theta}]\right)d\nu_s^{\delta,\alpha}\ ,
\end{align*}
which is a $\mathbb P_\alpha$ martingale. Since $h_{\theta}$ is bounded, we clearly have that $\tilde{h}_{\theta}\in L^{2}(\mathcal{X},\mu_{\theta})$ and hence, we have the representation
\[
\tilde{h}_{\theta}(x)=\sum_{i=0}^{\infty}\left<\tilde{h}_{\theta},\psi_i^{\theta}\right>_{\theta} \psi_i^{\theta}(x)=\sum_{i=1}^{\infty}\left<h_{\theta},\psi_i^{\theta}\right>_{\theta} \psi_i^{\theta}(x).
\]
Thus, we get
\begin{equation}
\frac{1}{\sqrt\delta} \pi_s^{\delta,\theta}[\tilde h_{\theta}]= \sum_{i=1}^{\infty}\left<h_{\theta},\psi_i^{\theta}\right>_{\theta} \frac{1}{\sqrt\delta} \pi_s^{\delta,\theta}[\psi_i^{\theta}]
\end{equation}
From this and equation (\ref{eq:KSsoln2}), it follows that
\begin{align}
\frac{1}{\sqrt\delta}\pi_s^{\delta,\theta}[\tilde h_{\theta}]&=
\frac{1}{\sqrt\delta}\sum_{i=1}^{\infty}\left<h_{\theta},\psi_i^{\theta}\right>_{\theta}e^{-\frac{\lambda_i^{\theta} s}{\delta}}\pi_0^{\theta}[\psi_i^{\theta}]\nonumber\\
&\hspace{0.2cm}+\frac{1}{\sqrt\delta}\int_0^s \sum_{i=1}^{\infty}\left<h_{\theta},\psi_i^{\theta}\right>_{\theta} e^{-\frac{\lambda_i^{\theta}(s-\rho)}{\delta}}\left(\pi_\rho^{\delta,\theta}[h_{\theta}\psi_i^{\theta}]-\left<h_{\theta},\psi_i^{\theta}\right>_{\theta}
-\pi_\rho^{\delta,\theta}[h_{\theta}]\pi_\rho^{\delta,\theta}[\psi_i^{\theta}]
\right)d\nu_\rho^{\delta,\theta}\nonumber\\
&\hspace{0.2cm}+\sum_{i=1}^{\infty}\left<h_{\theta},\psi_i^{\theta}\right>_{\theta}\frac{\left<h_{\theta},\psi_i^{\theta}\right>_{\theta}}{\sqrt\delta}\int_0^s e^{-\frac{\lambda_i^{\theta}(s-\rho)}{\delta}}\left(\pi_\rho^{\delta,\theta}[h_{\theta}]-\pi_\rho^{\delta,\alpha}[h_{\alpha}]\right)d\rho\nonumber\\
&\hspace{0.2cm}+\sum_{i=1}^{\infty}\left<h_{\theta},\psi_i^{\theta}\right>_{\theta}\frac{\left<h_{\theta},\psi_i^{\theta}\right>_{\theta}}{\sqrt\delta}\int_0^s e^{-\frac{\lambda_i^{\theta}(s-\rho)}{\delta}}d\nu_\rho^{\delta,\alpha}\label{Eq:ForLastTerm0}
\end{align}

Hence, we have
\begin{align}
I^{\delta}_{t}&=
\sum_{i=1}^{\infty}\left<h_{\theta},\psi_i^{\theta}\right>_{\theta}\pi_0^{\theta}[\psi_i^{\theta}] \frac{1}{\sqrt\delta}\int_{0}^{t}e^{-\frac{\lambda_i^{\theta} s}{\delta}}d\nu_s^{\delta,\alpha}\nonumber\\
&\hspace{0.2cm}+ \sum_{i=1}^{\infty}\left<h_{\theta},\psi_i^{\theta}\right>_{\theta}  \frac{1}{\sqrt\delta}\int_{0}^{t}\left[\int_0^s e^{-\frac{\lambda_i^{\theta}(s-\rho)}{\delta}}\left(\pi_\rho^{\delta,\theta}[h_{\theta}\psi_i^{\theta}]-\left<h_{\theta},\psi_i^{\theta}\right>_{\theta}
-\pi_\rho^{\delta,\theta}[h_{\theta}]\pi_\rho^{\delta,\theta}[\psi_i^{\theta}]
\right)d\nu_\rho^{\delta,\theta}\right]d\nu_s^{\delta,\alpha}\nonumber\\
&\hspace{0.2cm}+\sum_{i=1}^{\infty}\left<h_{\theta},\psi_i^{\theta}\right>_{\theta}\frac{\left<h_{\theta},\psi_i^{\theta}\right>_{\theta}}{\sqrt\delta}\int_{0}^{t}\left[\int_0^s e^{-\frac{\lambda_i^{\theta}(s-\rho)}{\delta}}\left(\pi_\rho^{\delta,\theta}[h_{\theta}]-\pi_\rho^{\delta,\alpha}[h_{\alpha}]\right)d\rho\right]d\nu_s^{\delta,\alpha}\nonumber\\
&\hspace{0.2cm}+\sum_{i=1}^{\infty}\left<h_{\theta},\psi_i^{\theta}\right>_{\theta}\frac{\left<h_{\theta},\psi_i^{\theta}\right>_{\theta}}{\sqrt\delta}\int_0^t \left[\int_0^s e^{-\frac{\lambda_i^{\theta}(s-\rho)}{\delta}}d\nu_\rho^{\delta,\alpha}\right]d\nu_s^{\delta,\alpha} \nonumber\\
&=R^{1,\delta}_{t}+R^{2,\delta}_{t}+R^{3,\delta}_{t}+R^{4,\delta}_{t}\label{Eq:ForLastTerm}
\end{align}
where $R^{j,\delta}_{t}$ for $j=1,2,3,4$ are defined by the four lines in (\ref{Eq:ForLastTerm}). We treat each of the $R^{j,\delta}_{t}$ terms separately. By Lemmas \ref{L:R2convergence} and \ref{L:R3convergence}, we have that
\begin{equation*}
\lim_{\delta\downarrow 0}\left\{\mathbb{E}_{\alpha}\sup_{t\in[0,T]}\left|R^{2,\delta}_{t}\right|^{2}+\mathbb{E}_{\alpha}\sup_{t\in[0,T]}\left|R^{3,\delta}_{t}\right|^{2}\right\}=0.
\end{equation*}
Thus, we have established that uniformly in $t\in[0,T]$
\begin{equation}
I^{\delta}_{t}-(R^{1,\delta}_{t}+R^{4,\delta}_{t})\rightarrow 0, \text{ in } \mathbb{P}_{\alpha}\text{ probability as }\delta\downarrow 0\label{Eq:EquivalentWeakLimit}
\end{equation}

It remains to treat the first and the last term, i.e., the term $R^{1,\delta}_{t}$ and the term $R^{4,\delta}_{t}$. Recall that
\begin{align}
R^{1,\delta}_{t}+R^{4,\delta}_{t}&=\int_{0}^{t}\left( \sum_{i=1}^{\infty}\left<h_{\theta},\psi_i^{\theta}\right>_{\theta} \frac{1}{\sqrt\delta}e^{-\frac{\lambda_i^{\theta} s}{\delta}}\pi_0^{\theta}[\psi_i^{\theta}]+\sum_{i=1}^{\infty}\left<h_{\theta},\psi_i^{\theta}\right>_{\theta}^{2}\frac{1}{\sqrt\delta} \int_0^s e^{-\frac{\lambda_i^{\theta}(s-\rho)}{\delta}}d\nu_\rho^{\delta,\alpha}\right)d\nu_s^{\delta,\alpha}.
\end{align}
The solution to the linear SDE
\begin{equation}
\Xi^{\delta,i}_{t}=-\frac{\lambda_{i}^{\theta}}{\delta}\int_{0}^{t}\Xi^{\delta,i}_{s}ds+\frac{1}{\sqrt{\delta}}\nu_t^{\delta,\alpha}\label{Eq:OUSDE1}
\end{equation}
is simply
\begin{equation}
\Xi^{\delta,i}_{t}=\frac{1}{\sqrt\delta}\int_0^t e^{-\frac{\lambda_i^{\theta}(t-s)}{\delta}}d\nu_s^{\delta,\alpha}\label{Eq:OUSDE2}
\end{equation}
So, by the martingale representation theorem, there is an appropriate Wiener process $\mathcal{W}$ such that we have in distribution (see Theorem 4.6 on page 174 \cite{KaratzasShreve})
\begin{align}
&R^{1,\delta}_{t}+R^{4,\delta}_{t}=\int_{0}^{t}\left( \sum_{i=1}^{\infty}\left<h_{\theta},\psi_i^{\theta}\right>_{\theta} \frac{1}{\sqrt\delta}e^{-\frac{\lambda_i^{\theta} s}{\delta}}\pi_0^{\theta}[\psi_i^{\theta}]+\sum_{i=1}^{\infty}\left<h_{\theta},\psi_i^{\theta}\right>_{\theta}^{2}\Xi^{\delta,i}_{s}\right)d\nu_s^{\delta,\alpha}\nonumber\\
&=\mathcal{W}\left(\int_{0}^{t}\left( \sum_{i=1}^{\infty}\left<h_{\theta},\psi_i^{\theta}\right>_{\theta} \frac{1}{\sqrt\delta}e^{-\frac{\lambda_i^{\theta} s}{\delta}}\pi_0^{\theta}[\psi_i^{\theta}]+\sum_{i=1}^{\infty}\left<h_{\theta},\psi_i^{\theta}\right>_{\theta}^{2}\Xi^{\delta,i}_{s}\right)^{2}ds\right)\nonumber\\
&=\mathcal{W}\left(\int_{0}^{t}\left( \sum_{i,j=1}^{\infty}\left<h_{\theta},\psi_i^{\theta}\right>_{\theta}\left<h_{\theta},\psi_j^{\theta}\right>_{\theta} \frac{1}{\delta}e^{-\frac{\left(\lambda_i^{\theta} +\lambda_j^{\theta}\right) s}{\delta}}\pi_0^{\theta}[\psi_i^{\theta}]\pi_0^{\theta}[\psi_j^{\theta}]+
\sum_{i,j=1}^{\infty}\left|\left<h_{\theta},\psi_i^{\theta}\right>_{\theta}\left<h_{\theta},\psi_j^{\theta}\right>_{\theta}\right|^{2}\Xi^{\delta,i}_{s}\Xi^{\delta,j}_{s}\right.\right.\nonumber\\
&\hspace{1cm}\left.\left.+
2\sum_{i,j=1}^{\infty}\left<h_{\theta},\psi_i^{\theta}\right>_{\theta}\left|\left<h_{\theta},\psi_j^{\theta}\right>_{\theta}\right|^{2}\frac{1}{\sqrt\delta}e^{-\frac{\lambda_i^{\theta} s}{\delta}}\pi_0^{\theta}[\psi_i^{\theta}]\Xi^{\delta,j}_{s}\right)ds\right)\nonumber\\
&=\mathcal{W}\left( \sum_{i,j=1}^{\infty}\frac{\left<h_{\theta},\psi_i^{\theta}\right>_{\theta}\left<h_{\theta},\psi_j^{\theta}\right>_{\theta}}{\lambda_i^{\theta} +\lambda_j^{\theta}} \pi_0^{\theta}[\psi_i^{\theta}]\pi_0^{\theta}[\psi_j^{\theta}]\left(1-e^{-\frac{\left(\lambda_i^{\theta}+\lambda_j^{\theta}\right) t}{\delta}}\right)+
\sum_{i,j=1}^{\infty}\left|\left<h_{\theta},\psi_i^{\theta}\right>_{\theta}\left<h_{\theta},\psi_j^{\theta}\right>_{\theta}\right|^{2}\int_{0}^{t}\Xi^{\delta,i}_{s}\Xi^{\delta,j}_{s}ds\right.\nonumber\\
&\hspace{1cm}\left.+
2\sum_{i,j=1}^{\infty}\left<h_{\theta},\psi_i^{\theta}\right>_{\theta}\left|\left<h_{\theta},\psi_j^{\theta}\right>_{\theta}\right|^{2}\pi_0^{\theta}[\psi_i^{\theta}]\frac{1}{\sqrt\delta}\int_{0}^{t}e^{-\frac{\lambda_i^{\theta} s}{\delta}}\Xi^{\delta,j}_{s}ds\right)\nonumber\\
&=\mathcal{W}\left( J^{1,\delta}_{t}+J^{2,\delta}_{t}+J^{3,\delta}_{t}\right)\ ,
\label{Eq:sum3Js}
\end{align}
where $J^{\ell,\delta}_{t}$ is the $\ell^{th}$ term in the variance of $\mathcal{W}(\cdot)$. So in order to find where $R^{1,\delta}_{t}+R^{4,\delta}_{t}$ converges to in distribution, we need to find the limit in probability of $J^{1,\delta}_{t}+J^{2,\delta}_{t}+J^{3,\delta}_{t}$. For each fixed  $t\in(0,T]$ we have
\begin{align}
\lim_{\delta\downarrow 0}J^{1,\delta}_{t}&=\lim_{\delta\downarrow 0}\sum_{i,j=1}^{\infty}\frac{\left<h_{\theta},\psi_i^{\theta}\right>_{\theta}\left<h_{\theta},\psi_j^{\theta}\right>_{\theta}}{\lambda_i^{\theta} +\lambda_j^{\theta}} \pi_0^{\theta}[\psi_i^{\theta}]\pi_0^{\theta}[\psi_j^{\theta}]\left(1-e^{-\frac{\left(\lambda_i^{\theta}+\lambda_j^{\theta}\right) t}{\delta}}\right)\nonumber\\
&=\sum_{i,j=1}^{\infty}\frac{\left<h_{\theta},\psi_i^{\theta}\right>_{\theta}\left<h_{\theta},\psi_j^{\theta}\right>_{\theta}}{\lambda_i^{\theta} +\lambda_j^{\theta}} \pi_0^{\theta}[\psi_i^{\theta}]\pi_0^{\theta}[\psi_j^{\theta}]=u^{2}_{\theta}(h_{\theta})\ .
\label{Eq:uthetaLimit}
\end{align}

For $J^{2,\delta}_{t}$ we use ergodicity of the pair $(\Xi^{\delta,i}_{t}, \Xi^{\delta,j}_{t})$. Clearly, for any $i\geq 1$, $\Xi^{\delta,i}_{t}$ is ergodic (it is a one-dimensional Ornstein-Uhlenbeck process). Also, one can check the Fokker-Planck equation for the pair $(\Xi^{\delta,i}_{t}, \Xi^{\delta,j}_{t})$ to see that for $\lambda_{i}^{\theta}\neq \lambda_{j}^{\theta}$, $(\Xi^{\delta,i}_{t}, \Xi^{\delta,j}_{t})=_d(\Xi^{1,i}_{t/\delta}, \Xi^{1,j}_{t/\delta})$ is jointly Gaussian and ergodic, and for every $t\in[0,T]$ converges as $\delta\downarrow 0$ in distribution to  a pair of jointly Gaussian random variables $(Z_{i},Z_{j})$ with mean zero and invertible covariance matrix. Thus, by the ergodic theorem we have for every $t\geq 0$

\begin{equation}
\lim_{\delta\downarrow 0}\mathbb{E}\left|\int_{0}^{t}\Xi^{\delta,i}_{s}\Xi^{\delta,j}_{s}ds- t\beta^{i,j}\right|=\lim_{\delta\downarrow 0}\mathbb{E}\left|\delta\int_{0}^{t/\delta}\Xi^{i,1}_{s}\Xi^{j,1}_{s}ds- t\beta^{i,j}\right|=0
\end{equation}
where $\beta^{i,j}=\mathbb{E}[Z_{i}Z_{j}]=\lim_{\delta\downarrow 0}\mathbb{E}\left[\Xi^{\delta,i}_{t}\Xi^{\delta,j}_{t}\right]=\frac{1}{\lambda_i^{\theta}+\lambda_j^{\theta}}$. Since by assumption we have $v^{2}_{\theta}(h_{\theta})<\infty$, we have for every $t\geq 0$
\begin{equation}
\lim_{\delta\downarrow 0}J^{2,\delta}_{t}=t ~v^{2}_{\theta}(h_{\theta}), \quad \text{in probability as }\delta\downarrow 0\ .
\end{equation}
For similar reasons, we also obtain that for every $t\geq 0$
\begin{equation}
\lim_{\delta\downarrow 0}J^{3,\delta}_{t}=0, \quad \text{in probability as }\delta\downarrow 0\ .
\label{Eq:J3limit}
\end{equation}
Hence, we get that for every fixed $t\in(0,T]$
\begin{equation}
R^{1,\delta}_{t}+R^{4,\delta}_{t}\Rightarrow  \mathcal{W}\left(u^{2}_{\theta}(h_{\theta})\indicator{t>0}+ t v^{2}_{\theta}(h_{\theta})\right)
\end{equation}
as $\delta\rightarrow 0$, which then implies that for every fixed $t\in(0,T]$
\begin{equation}
I^{\delta}_{t}\Rightarrow  \mathcal{W}\left(u^{2}_{\theta}(h_{\theta})+ t v^{2}_{\theta}(h_{\theta})\right)
\end{equation}
as $\delta\rightarrow 0$.
\end{proof}

\begin{proof}[Proof of Corollary \ref{C:CLT}]
The proof of the CLT for paths requires identification of the weak limit of $I_\cdot^{\delta}$, which we do using the martingale central limit theorem that is stated in Theorem 1.4 on page 339 of \cite{EthierKurtz1986}. In particular, the process $I_t^\delta$ is a martingale and takes values in the space $C([0,T];\mathbb R)$ with probability one, so it follows that
\begin{equation}
\label{eq:condForMartinCLT}\lim_{\delta\downarrow0}\mathbb E_\alpha\left[\sup_{t\in[0,T]}\left|I_t^\delta-I_{t-}^\delta\right|\right] = 0\ .
\end{equation}
Given \eqref{eq:condForMartinCLT}, if the quadratic variation of $I^\delta_{t}$ converges to a constant multiple of $t$ for each $t\in[0,T]$, then the martingale CLT says that $I_\cdot^\delta$ converges weakly to a Brownian motion multiplied by the limiting quadratic variation.

Convergence of the quadratic variation was shown in the proof of Theorem \ref{thm:CLT} by showing that terms $J_t^{1,\delta}$, $J_t^{2,\delta}$, and $J_t^{3,\delta}$ converge in probability as $\delta\rightarrow 0$. Indeed, if $X_{0}$ follows the invariant distribution, then $\pi_0^{\theta}[\psi_i^{\theta}]=0$ and $C(\psi_{i},T,\theta)=1$ for all $i\in\mathbb{N}$,. This means that
\[
C_{i,j}(T,\theta)=\frac{2}{\lambda_i^{\theta}}+\frac{2}{\lambda_j^{\theta}}
\]
and that the terms $J^{1,\delta}_{t}=J^{3,\delta}_{t}=0$ in the proof of Theorem \ref{thm:CLT}. Hence, the quadratic variation of $I_t^\delta$ converges to $tv_\theta(h_\theta)$ in $\mathbb P_\alpha$-probability for all $t\in [0,T]$, and so we get that in distribution
\begin{equation}
I^{\delta}_{\cdot}\Rightarrow  \sqrt{v^{2}_{\theta}(h_{\theta})}\mathcal{W}(\cdot)\qquad\hbox{under $\mathbb P_\alpha$}\ .
\end{equation}

The remaining terms $(**)$ and $(\dagger)$ from equation \eqref{eq:logLderiv} were shown in the proof of Theorem \ref{thm:CLT} to go to zero in $\mathbb P_\alpha$-probability
uniformly for all $t\in[0,T]$, and therefore they also (both) converge pathwise to zero in probability. Hence, all three terms in equation \eqref{eq:logLderiv} converge pathwise,
two of which in probability to zero, and the other weakly to a $\sqrt{v_\theta^2(h_\theta)}\mathcal W(\cdot)$. Therefore, by Slutzky's theorem the sum of all three terms converges
weakly to $\sqrt{v_\theta^2(h_\theta)}\mathcal W(\cdot)$.
\end{proof}

%%%%%%%%%%%%%%%%%%%

\section{On Statistical Inference}\label{S:ParameterEstimation}
In Subsection \ref{SS:filterLimit}, and in particular in Corollary \ref{Lemma:likelihoodConvergence}, we proved that the likelihood function $\phi^{\delta,\theta}_{T}[1]$ is close in
probability to the reduced likelihood $\bar{\phi}^{\delta,\theta}_{T}[1]$ when $\delta$ is small. In this section, we use these results to do statistical inference for the unknown true parameter $\alpha\in\Theta$  based on the MLE of the log-likelihood function.

Corollary \ref{Lemma:likelihoodConvergence} suggests that for parameter estimation, we can approximate the log-likelihood
\begin{equation}
\rho_T^\delta(\theta)=\log \phi^{\delta,\theta}_{T}[1]=\log\mathbb{E}_{\theta}^{*}\left[ Z_T^{\delta,\theta} \Big |\mathcal{Y}^{\delta}_T\right]\label{Eq:LogLikelihood}
\end{equation}
by the `reduced' log-likelihood
\begin{equation}
\bar{\rho}_T^\delta(\theta)=\log \bar{\phi}^{\delta,\theta}_{T}[1]=\log \mathbb{E}_{\theta}^{*}\left[ \bar{Z}_T^{\delta,\theta} \Big |\mathcal{Y}^{\delta}_T\right]=\bar h_{\theta} Y_T^\delta-\frac 12|\bar h_{\theta}|^2 T.\label{Eq:IntermediateLikelihood}
\end{equation}
Clearly, $\bar{\rho}_T^\delta(\theta)$ is of reduced dimension and easier to work with, as long as one can compute or approximate the invariant measure of the fast dynamics and thus compute or approximate $\bar h_{\theta}$.
Based on the full log-likelihood (\ref{Eq:LogLikelihood}), one would need to compute $\rho_T^\delta(\theta)$ and thus rely on methods such as particle filters or sequential Monte Carlo (e.g., Chapter 9 of \cite{bainCrisan}). However, such methods can be computational expensive due to high-dimensionality issues.

With this mind, we prove that the MLE based on (\ref{Eq:IntermediateLikelihood}) is in fact, under the appropriate identifiability condition, asymptotically consistent when the time horizon is large enough.

\begin{condition}\label{A:Assumption4}
\begin{enumerate}
\item The mapping $\bar h_\theta$ from $\Theta\mapsto \mathbb R^m$ is a one-to-one function of $\theta$.
\item
There are constants $C>0$, $p\geq 1$ and $q>1$, such that for any $\theta_{1},\theta_{2}\in\Theta$,
\[
|\bar{h}_{\theta_{1}}-\bar{h}_{\theta_{2}}|^{2p}\leq C|\theta_{1}-\theta_{2}|^{q}.
\]
\end{enumerate}
\end{condition}
Recall the definition of MLE from equation \eqref{Eq:MLE1}, and let us equivalently define the reduced estimator as
\begin{equation}
\bar{\theta}^{\delta}_{T}\doteq\argmax_{\theta\in\Theta}\bar{\rho}_T^\delta(\theta).\label{Eq:ThetaMLE}
\end{equation}
Continuity of $\bar{\rho}_T^\delta(\cdot)$ that is ensured by Condition \ref{A:Assumption4}, together with compactness of $\Theta$, imply that the corresponding maximizer exists almost surely.

Next, we prove consistency of the reduced log-likelihood.
\begin{theorem}\label{T:ConsisitencyReducedLikelihood}
Assume Conditions \ref{A:Assumption1} and \ref{A:Assumption4}. Let $\alpha$ be the true parameter value. Let us denote by $\bar{\Theta}^{\delta}_{T}$ the equivalence class of maximizers of $\bar{\rho}_T^\delta(\theta)$. The maximum likelihood estimator based on (\ref{Eq:IntermediateLikelihood}), i.e., any $\bar{\theta}^{\delta}_{T}\in \bar{\Theta}^{\delta}_{T}$, is strongly consistent as
first $\delta\downarrow 0$ and then $T\rightarrow\infty$, i.e., for any $\varepsilon>0$
\[
\lim_{T\rightarrow\infty}\lim_{\delta\downarrow 0}\mathbb{P}_{\alpha}\left(\left|\bar{\theta}^{\delta}_{T}-\alpha\right|>\varepsilon\right)=0.
\]
\end{theorem}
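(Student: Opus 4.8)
The plan is to identify a deterministic limiting contrast function for the reduced log-likelihood, show that under the identifiability hypothesis it is uniquely maximized at the true value $\alpha$, and then bound the two sources of error (the homogenization error and the Brownian fluctuation) separately, in the prescribed order of limits.

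First I would fix the true value $\alpha$ and rewrite $\bar\rho_T^\delta$ using the homogenized observation. Since $\overline Y_T=\bar h_\alpha T+W_T$ by \eqref{Eq:LimitingModel}, setting $R_T^\delta\doteq Y_T^\delta-\overline Y_T$ yields the exact identity
\[
\bar\rho_T^\delta(\theta)=\bar h_\theta\cdot Y_T^\delta-\tfrac12|\bar h_\theta|^2T=T\,g_\alpha(\theta)+\bar h_\theta\cdot W_T+\bar h_\theta\cdot R_T^\delta,
\]
where $g_\alpha(\theta)\doteq\bar h_\theta\cdot\bar h_\alpha-\tfrac12|\bar h_\theta|^2=\tfrac12|\bar h_\alpha|^2-\tfrac12|\bar h_\theta-\bar h_\alpha|^2$. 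Continuity of $\theta\mapsto\bar h_\theta$ (ensured by Condition \ref{A:Assumption4}) makes $g_\alpha$ continuous on the compact set $\Theta$, so $M\doteq\sup_{\theta,\theta'\in\Theta}|\bar h_\theta-\bar h_{\theta'}|<\infty$; moreover $g_\alpha(\theta)\le g_\alpha(\alpha)$ with equality only when $\bar h_\theta=\bar h_\alpha$, i.e.\ only at $\theta=\alpha$ by the one-to-one assumption Condition \ref{A:Assumption4}(i). Hence for every $\varepsilon>0$, if the set $\{\theta\in\Theta:|\theta-\alpha|\ge\varepsilon\}$ is nonempty it is compact and $\theta\mapsto|\bar h_\theta-\bar h_\alpha|^2$ attains a minimum on it at some $\theta^\ast\neq\alpha$, so $\kappa_\varepsilon\doteq\tfrac12|\bar h_{\theta^\ast}-\bar h_\alpha|^2>0$ (if the set is empty there is nothing to prove).

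Second, I would convert the maximality of $\bar\theta_T^\delta$ into a tail estimate. Any $\bar\theta_T^\delta\in\bar\Theta_T^\delta$ satisfies $\bar\rho_T^\delta(\bar\theta_T^\delta)\ge\bar\rho_T^\delta(\alpha)$, so on the event $\{|\bar\theta_T^\delta-\alpha|\ge\varepsilon\}$ the identity above together with $g_\alpha(\bar\theta_T^\delta)-g_\alpha(\alpha)\le-\kappa_\varepsilon$ gives
\[
0\le\bar\rho_T^\delta(\bar\theta_T^\delta)-\bar\rho_T^\delta(\alpha)\le-T\kappa_\varepsilon+M\big(|W_T|+|R_T^\delta|\big),
\]
whence $|W_T|+|R_T^\delta|\ge T\kappa_\varepsilon/M$ on that event, and therefore
\[
\mathbb{P}_\alpha\big(|\bar\theta_T^\delta-\alpha|\ge\varepsilon\big)\le\mathbb{P}_\alpha\big(|W_T|\ge\tfrac{T\kappa_\varepsilon}{2M}\big)+\mathbb{P}_\alpha\big(|R_T^\delta|\ge\tfrac{T\kappa_\varepsilon}{2M}\big).
\]
With $T$ fixed, the second term tends to $0$ as $\delta\downarrow0$ because $R_T^\delta=Y_T^\delta-\overline Y_T\to0$ in $\mathbb{P}_\alpha$-probability by \eqref{Eq:MeanSquareConvergence}. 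Afterwards, letting $T\to\infty$, the first term tends to $0$ by Chebyshev's inequality applied to $W$, since $\mathbb{E}_\alpha|W_T|^2=nT$ while the threshold grows linearly in $T$. This is exactly $\lim_{T\to\infty}\lim_{\delta\downarrow0}\mathbb{P}_\alpha(|\bar\theta_T^\delta-\alpha|>\varepsilon)=0$.

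Two points deserve care rather than presenting a genuine obstacle. The heart of the matter is the strict positivity of $\kappa_\varepsilon$: this is precisely where identifiability Condition \ref{A:Assumption4}(i) is used, and without it $g_\alpha$ could attain its maximum on a set strictly larger than $\{\alpha\}$ and the estimator would not be consistent. Secondly, since the argmax $\bar\Theta_T^\delta$ need not be a singleton, one should check that an element $\bar\theta_T^\delta$ can be chosen in a $\mathcal Y_T^\delta$-measurable way so that the probabilities above make sense; this follows from continuity of $\theta\mapsto\bar\rho_T^\delta(\theta)$ (Condition \ref{A:Assumption4}) together with a standard measurable-maximum theorem, after which the argument applies to any such selection. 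Finally, the stated order of limits is genuinely used: the $\delta\downarrow0$ limit removes the homogenization error $R_T^\delta$ at fixed $T$, and only then does the $\sqrt{T}$-scale of $W_T$ relative to the linear-in-$T$ separation of $g_\alpha$ finish the argument; Condition \ref{A:Assumption4}(ii) plays no role here beyond guaranteeing continuity and boundedness of $\bar h_\theta$.
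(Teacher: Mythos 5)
Your proof is correct, but it follows a genuinely different route from the paper's. The paper substitutes $Y^{\delta}_{T}=\int_{0}^{T}h_{\alpha}(X^{\delta}_{s})ds+W_{T}$ into $\bar{\rho}_T^\delta$, proves a moment bound $\mathbb{E}_{\alpha}|\bar{\rho}_T^\delta(\theta_{1},\alpha)-\bar{\rho}_T^\delta(\theta_{2},\alpha)|^{2p}\leq C|\theta_{1}-\theta_{2}|^{q}$ (this is where Condition \ref{A:Assumption4}(ii) is genuinely needed, as a tightness criterion), combines it with the ergodic theorem and Theorem 12.3 of \cite{Billingsley} to get weak convergence of the random field $\theta\mapsto\bar{\rho}_T^\delta(\theta,\alpha)$ to $\bar{\rho}_T(\theta,\alpha)$, and then passes the argmax-comparison probability through this weak limit before sending $T\to\infty$, in the style of Theorem 2.25 of \cite{Kutoyants}. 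You instead bypass the functional limit theorem entirely: the exact decomposition $\bar\rho_T^\delta(\theta)=Tg_\alpha(\theta)+\bar h_\theta\cdot W_T+\bar h_\theta\cdot R_T^\delta$ with $g_\alpha(\theta)-g_\alpha(\alpha)=-\tfrac12|\bar h_\theta-\bar h_\alpha|^2$, the compactness/identifiability gap $\kappa_\varepsilon>0$, and the elementary inequality $\bar\rho_T^\delta(\bar\theta_T^\delta)\ge\bar\rho_T^\delta(\alpha)$ reduce everything to a union bound on $|W_T|$ and $|R_T^\delta|$, handled respectively by Chebyshev and by \eqref{Eq:MeanSquareConvergence}. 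This is more elementary (no tightness in $C(\Theta)$, no continuity of the argmax functional, and Condition \ref{A:Assumption4}(ii) is used only to get continuity of $\theta\mapsto\bar h_\theta$), it makes the order of limits completely transparent, and it is in principle quantitative since $\mathbb{P}_\alpha(|W_T|\ge T\kappa_\varepsilon/2M)$ decays exponentially in $T$; what the paper's approach buys is a template that generalizes to situations where the contrast function does not admit such a clean closed form and one must genuinely work with the limiting random field. Your two side remarks (that identifiability is exactly where $\kappa_\varepsilon>0$ comes from, and that a measurable selection from $\bar\Theta_T^\delta$ exists) are both appropriate and the argument applies to any such selection, matching the paper's ``any $\bar\theta_T^\delta\in\bar\Theta_T^\delta$'' formulation.
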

\begin{proof}
Let us denote
\[
\bar{\rho}_T^\delta(\theta,\alpha)=\bar h_{\theta} \int_{0}^{T}h_{\alpha}(X^{\delta}_{s})ds +\bar h_{\theta} W_{T}-\frac 12|\bar h_{\theta}|^2T.
\]
Then, we have
\begin{align}
\mathbb{E}_{\alpha}\left|\bar{\rho}_T^\delta(\theta_{1},\alpha)-\bar{\rho}_T^\delta(\theta_{2},\alpha) \right|^{2p}&\leq C |\bar{h}_{\theta_{1}}-\bar{h}_{\theta_{2}}|^{2p}
\left(1+\mathbb{E}_{\alpha}\int_{0}^{T}|h_{\alpha}(X^{\delta}_{s})|^{2p}ds \right),\nonumber\\
&\leq C |\theta_{1}-\theta_{2}|^{q}\nonumber
\end{align}
where we used Condition \ref{A:Assumption4}. The constant $C$ might change from line to line, but we do not indicate this in the notation.
Next, the ergodic theorem guarantees that the finite dimensional distributions of $\bar{\rho}^{\delta}_{T}(\cdot,\alpha)$ converge with probability $1$, as $\delta\downarrow 0$, to those of
\[
\bar{\rho}_T(\theta,\alpha)=\bar h_{\theta} \bar h_{\alpha}T +\bar h_{\theta} W_{T}-\frac 12|\bar h_{\theta}|^2T= -\frac{1}{2}|\bar h_{\theta} -\bar h_{\alpha}|^{2}T+\frac{1}{2}|\bar h_{\alpha}|^2T+ \bar{h}_{\theta}W_{T}.
\]
Therefore, by Theorem 12.3 in \cite{Billingsley}, we have weak convergence of the measure $\bar{\rho}^{\delta}_{T}(\cdot,\alpha)$ to that of $\bar{\rho}_{T}(\cdot,\alpha)$. Hence, we have obtained (in a similar manner to Theorem 2.25 on page 161 of \cite{Kutoyants})
\begin{align}
\lim_{\delta\downarrow 0}\mathbb{P}_{\alpha}\left(\left|\bar{\theta}^{\delta}_{T}-\alpha\right|>\varepsilon\right)&=
\lim_{\delta\downarrow 0}\mathbb{P}_{\alpha}\left(\sup_{\left|\theta-\alpha\right|>\varepsilon}\frac{1}{T}\bar{\rho}_{T}^\delta(\theta,\alpha)>\sup_{\left|\theta-\alpha\right|\leq\varepsilon}\frac{1}{T}\bar{\rho}_{T}^\delta(\theta,\alpha)\right)\nonumber\\
&=
\mathbb{P}_{\alpha}\left(\sup_{\left|\theta-\alpha\right|>\varepsilon}\frac{1}{T}\bar{\rho}_{T}(\theta,\alpha)>\sup_{\left|\theta-\alpha\right|\leq\varepsilon}\frac{1}{T}\bar{\rho}_{T}(\theta,\alpha)\right)\nonumber
\end{align}
Hence, if we now define $\bar{\bar{\rho}}(\theta,\alpha)=-\frac{1}{2}|\bar h_{\theta} -\bar h_{\alpha}|^{2}+\frac{1}{2}|\bar h_{\alpha}|^2$, we then get
\begin{align}
\lim_{T\rightarrow\infty}\lim_{\delta\downarrow 0}\mathbb{P}_{\alpha}\left(\left|\bar{\theta}^{\delta}_{T}-\alpha\right|>\varepsilon\right)&=
\indicator{\sup_{\left|\theta-\alpha\right|>\varepsilon}\bar{\bar{\rho}}(\theta,\alpha)>\sup_{\left|\theta-\alpha\right|\leq\varepsilon}\bar{\bar{\rho}}(\theta,\alpha)}=0\ ,\nonumber
\end{align}
where the last computation used the fact that $\bar{\bar{\rho}}(\theta,\alpha)$ has a unique maximum at $\theta=\alpha$, which follows from part i) of Condition \ref{A:Assumption4}. With this, we conclude the proof of the theorem.
\end{proof}

Solving the equation $\frac{\partial}{\partial \theta}\bar{\rho}^{\delta}_{T}( \theta)=0$ for $\theta\in\Theta$, we define $\tilde{\theta}^{\delta}_{T}$ to be the solution (if it exists) to
\begin{align}
 \bar{h}_{\theta}=\frac 1TY^{\delta}_{T}.\label{Eq:EquationForReducedMLE}
\end{align}
It is clear that (\ref{Eq:ThetaMLE}) and (\ref{Eq:EquationForReducedMLE}) are not equivalent; (\ref{Eq:ThetaMLE}) contains all local minima and local maxima of $\bar{\rho}^{\delta}_{T}(\theta)$ which may be more than one. Also equation \eqref{Eq:EquationForReducedMLE} may not even have a solution in $\Theta$ with positive probability. For example, letting $\tilde{\theta}^{\delta}_{T}$ be a solution to (\ref{Eq:EquationForReducedMLE}) and assuming $\theta\in(\theta_\ell,\theta_u)$, then
\[
\bar{\theta}^{\delta}_{T}=\tilde{\theta}^{\delta}_{T}\indicator{\{\tilde{\theta}^{\delta}_{T}\in(\theta_{\ell},\theta_{u})\}}+\theta_{\ell}\indicator{\tilde{\theta}^{\delta}_{T}\leq \theta_{\ell}\}}+\theta_{u}\indicator{\{\tilde{\theta}^{\delta}_{T}\geq \theta_{u}}.
\]
By Theorem \ref{T:ConsisitencyReducedLikelihood}, and based on smoothness of $\bar{h}_{\theta}$ as a function of $\theta$, asymptotic normality of the MLE corresponding to the reduced log-likelihood holds.

\begin{theorem}\label{T:CLTReducedLikelihood}
Assume Conditions \ref{A:Assumption1}, \ref{A:Assumption4} and that $\dot{\bar{h}}_{\theta}\doteq \frac{\partial \bar{h}_{\theta}}{\partial \theta}$ is continuous and for every $\theta\in\mathbb{R}^{d}$ the matrix $\dot{\bar{h}}_{\theta}^{*}\dot{\bar{h}}_{\theta}$ is positive definite. The maximum likelihood estimator based on (\ref{Eq:IntermediateLikelihood}) is asymptotically normal
under $\mathbb{P}_{\alpha}$, i.e.
\begin{equation}
\label{E:CLTReducedLikelihood}
\sqrt{T}\left(\bar{\theta}^{\delta}_{T}-\alpha\right)\Rightarrow N\left(0,\left(\dot{\bar{h}}^{*}_{\alpha}\dot{\bar{h}}_{\alpha}\right)^{-1}\right)\qquad\hbox{first as $\delta\downarrow 0$ and then $T\rightarrow\infty$.}
\end{equation}
\end{theorem}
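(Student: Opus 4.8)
The plan is to prove \eqref{E:CLTReducedLikelihood} in two stages that respect the order of the iterated limit. In the first stage I freeze $T$ and let $\delta\downarrow0$, which reduces the estimation problem to the homogenized observation model $d\bar Y_t=\bar h_\theta\,dt+dW_t$ on $[0,T]$; in the second stage I let $T\to\infty$ and appeal to the large-time asymptotics of the MLE for that model. For the first stage, recall that under $\mathbb P_\alpha$ one has $\bar\rho_T^\delta(\theta)=\bar h_\theta\,Y_T^\delta-\tfrac12|\bar h_\theta|^2T$ with $Y_T^\delta=\int_0^T h_\alpha(X_s^\delta)\,ds+W_T$. By the ergodic theorem (applied exactly as in the proof of Theorem~\ref{T:ConsisitencyReducedLikelihood}, via $X_s^\delta\stackrel{d}{=}X^1_{s/\delta}$), $\int_0^T h_\alpha(X_s^\delta)\,ds\to\bar h_\alpha T$ in $\mathbb P_\alpha$-probability as $\delta\downarrow0$, hence $Y_T^\delta\to\bar Y_T\doteq\bar h_\alpha T+W_T$; since $\theta\mapsto\bar h_\theta$ is continuous on the compact set $\Theta$ this yields $\sup_{\theta\in\Theta}|\bar\rho_T^\delta(\theta)-\bar\rho_T(\theta)|\to0$ in probability, where $\bar\rho_T(\theta)\doteq\bar h_\theta\bar Y_T-\tfrac12|\bar h_\theta|^2T$ is precisely the log-likelihood of the limiting model.

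Next I upgrade this to convergence of the maximizers. Theorem~\ref{T:ConsisitencyReducedLikelihood} gives $\bar\theta_T^\delta\to\alpha$ in the iterated limit, and on a fixed neighborhood of $\alpha$ the map $\theta\mapsto\bar\rho_T(\theta)$ is a.s.\ strictly concave for $T$ large: its Hessian at $\alpha$ equals $\ddot{\bar h}_\alpha[W_T]-T\dot{\bar h}_\alpha^*\dot{\bar h}_\alpha$, which is negative definite for $T$ large since $|W_T|/T\to0$ a.s.\ and $\dot{\bar h}_\alpha^*\dot{\bar h}_\alpha$ is positive definite (here I use $\alpha\in\mathrm{int}\,\Theta$). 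Hence $\bar\rho_T$ has an a.s.\ unique, well-separated maximizer $\bar\theta_T$, and the uniform convergence of $\bar\rho_T^\delta(\cdot)$ implies $\bar\theta_T^\delta\to\bar\theta_T$ in $\mathbb P_\alpha$-probability as $\delta\downarrow0$; multiplying by the deterministic $\sqrt T$ gives $\sqrt T(\bar\theta_T^\delta-\alpha)\to\sqrt T(\bar\theta_T-\alpha)$ in probability for each fixed $T$. It therefore suffices to identify $\lim_{T\to\infty}$ of the law of $\sqrt T(\bar\theta_T-\alpha)$, i.e.\ the classical large-time asymptotics of the MLE for $d\bar Y_t=\bar h_\theta\,dt+dW_t$.

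For the second stage I run the standard M-estimation expansion for the limiting model; alternatively this step can be quoted from the classical theory (e.g.\ \cite{Kutoyants}), since the limiting model has constant diffusion and state-independent drift $\bar h_\theta$, with per-unit-time Fisher information $\dot{\bar h}_\alpha^*\dot{\bar h}_\alpha$. The maximizer $\bar\theta_T$ is interior for $T$ large (by consistency) and solves $\dot{\bar h}_{\bar\theta_T}^*(\bar Y_T-T\bar h_{\bar\theta_T})=0$; using $\bar Y_T-T\bar h_\alpha=W_T$ and the Taylor expansions $\bar h_{\bar\theta_T}=\bar h_\alpha+\dot{\bar h}_\alpha(\bar\theta_T-\alpha)+O(|\bar\theta_T-\alpha|^2)$, $\dot{\bar h}_{\bar\theta_T}=\dot{\bar h}_\alpha+O(|\bar\theta_T-\alpha|)$, the first-order condition rearranges to
\[
T\,\dot{\bar h}_\alpha^*\dot{\bar h}_\alpha\,(\bar\theta_T-\alpha)=\dot{\bar h}_\alpha^*W_T+o_P\big(T|\bar\theta_T-\alpha|\big)+O_P\big(T|\bar\theta_T-\alpha|^2\big),
\]
where $|W_T|/T\to0$ a.s.\ was used. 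Inverting the positive-definite matrix $\dot{\bar h}_\alpha^*\dot{\bar h}_\alpha$ gives first $|\bar\theta_T-\alpha|=O_P(T^{-1/2})$, and then $\sqrt T(\bar\theta_T-\alpha)=(\dot{\bar h}_\alpha^*\dot{\bar h}_\alpha)^{-1}\dot{\bar h}_\alpha^*\,W_T/\sqrt T+o_P(1)$. Since $W_T/\sqrt T\sim N(0,I_n)$ exactly for every $T$, the leading term has law $N\big(0,(\dot{\bar h}_\alpha^*\dot{\bar h}_\alpha)^{-1}\dot{\bar h}_\alpha^*\dot{\bar h}_\alpha(\dot{\bar h}_\alpha^*\dot{\bar h}_\alpha)^{-1}\big)=N\big(0,(\dot{\bar h}_\alpha^*\dot{\bar h}_\alpha)^{-1}\big)$, and Slutsky's theorem yields $\sqrt T(\bar\theta_T-\alpha)\Rightarrow N\big(0,(\dot{\bar h}_\alpha^*\dot{\bar h}_\alpha)^{-1}\big)$ as $T\to\infty$. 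Combining with the first stage gives \eqref{E:CLTReducedLikelihood}.

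The step I expect to be the main obstacle is making the passage $\bar\theta_T^\delta\to\bar\theta_T$ in the first stage fully rigorous: it needs both the uniform-in-$\theta$ convergence of the random fields $\bar\rho_T^\delta(\cdot)$ and the a.s.\ well-separatedness of the limiting maximizer. In the over-identified case $m>d$, $\bar\theta_T$ is the point of the image $\{\bar h_\theta:\theta\in\Theta\}$ closest to $Y_T^\delta/T$, so one must check this nearest-point projection is a.s.\ uniquely attained near $\bar h_\alpha$; this holds because positive definiteness of $\dot{\bar h}_\alpha^*\dot{\bar h}_\alpha$ makes the image a $C^1$ immersed submanifold admitting a tubular neighborhood, and $Y_T^\delta/T$ concentrates near $\bar h_\alpha$ for $T$ large, so with high probability it lies in that neighborhood and off the set of points with non-unique projection. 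The accompanying bookkeeping — the bootstrap bound $|\bar\theta_T-\alpha|=O_P(T^{-1/2})$ and the verification that $\alpha$ is interior to $\Theta$ (so the first-order condition holds) — is routine but must be stated.
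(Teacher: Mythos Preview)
Your argument is correct and reaches the same endpoint as the paper, but the route is genuinely different. The paper never introduces the intermediate estimator $\bar\theta_T$ for the homogenized model. Instead it works directly with $\bar\theta_T^\delta$: writing the first-order condition $\bar h_{\bar\theta_T^\delta}=Y_T^\delta/T$, applying the mean value theorem to get $\bar h_\alpha+\dot{\bar h}_{\alpha^*}(\bar\theta_T^\delta-\alpha)=Y_T^\delta/T$ with $|\alpha^*-\alpha|\le|\bar\theta_T^\delta-\alpha|$, and then substituting $Y_T^\delta=\int_0^T h_\alpha(X_s^\delta)\,ds+W_T$ to obtain the explicit decomposition
\[
\sqrt T(\bar\theta_T^\delta-\alpha)=\sqrt T\Big[\tfrac{\delta}{T}\int_0^{T/\delta}h_\alpha(X_s^1)\,ds-\bar h_\alpha\Big](\dot{\bar h}_{\alpha^*})^{-1}+\Big[\tfrac{1}{\sqrt T}W_T\Big](\dot{\bar h}_{\alpha^*})^{-1}.
\]
The ergodic theorem kills the first bracket as $\delta\downarrow0$, consistency (Theorem~\ref{T:ConsisitencyReducedLikelihood}) gives $\dot{\bar h}_{\alpha^*}\to\dot{\bar h}_\alpha$, and Slutsky finishes. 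This is shorter because it bypasses the argmax-continuity step entirely: there is no need to establish well-separatedness of a limiting maximizer or to invoke uniform convergence of the criterion.

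What your approach buys is a cleaner treatment of the over-identified case $n>d$. The paper's proof tacitly writes $(\dot{\bar h}_{\alpha^*})^{-1}$ and uses the score equation in the form $\bar h_\theta=Y_T^\delta/T$, which is only literally correct when $n=d$; your first-order condition $\dot{\bar h}_{\bar\theta_T}^*(\bar Y_T-T\bar h_{\bar\theta_T})=0$ and the tubular-neighborhood remark address the general case explicitly. The cost is the extra bookkeeping you flagged: the well-separatedness of $\bar\theta_T$ holds only on an event of probability tending to $1$ as $T\to\infty$, so the passage $\bar\theta_T^\delta\to\bar\theta_T$ must be stated modulo that event, which is harmless for the iterated limit but needs to be said.
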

\begin{proof} The proof is similar to that of Proposition 1.34 of \cite{Kutoyants}, even though there are no multiscale effects there. Below, we present the proof, emphasizing the differences due to the multiscale aspect of the present problem. %  We recall the main steps here for completeness in the 1-dimensional case, i.e., when $n=d=1$.
Based on (\ref{Eq:EquationForReducedMLE}) for $\theta=\bar{\theta}^{\delta}_{T}$ we write
\[
\bar{h}_{\alpha}+\left(\bar{\theta}^{\delta}_{T}-\alpha\right)\dot{\bar{h}}_{\alpha^{*}}=\frac{1}{T}Y^{\delta}_{T}
\]
where $|\alpha^{*}-\alpha|\leq |\bar{\theta}^{\delta}_{T}-\alpha|$. Rearranging the latter expression we get
\begin{align}
\sqrt{T}\left(\bar{\theta}^{\delta}_{T}-\alpha\right)&=\sqrt{T}\left[\frac{1}{T}Y^{\delta}_{T}-\bar{h}_{\alpha}\right]\left(\dot{\bar{h}}_{\alpha^{*}}\right)^{-1} \nonumber\ .\
%&=\frac{1}{\dot{\bar{h}}_{\alpha}}\sqrt{T}\left[\frac{1}{T}Y^{\delta}_{T}-\bar{h}_{\alpha}\right](1+o(1)).\nonumber
\end{align}
Now under the measure  $\mathbb{P}_{\alpha}$, we have that $Y^{\delta}_{T}=\int_{0}^{T}h_{\alpha}(X^{\delta}_{s})ds+W_{T}$. Hence, we can continue the latter expression as
\begin{align}
\sqrt{T}\left(\bar{\theta}^{\delta}_{T}-\alpha\right)
&=\sqrt{T}\left[\frac{1}{T}\int_{0}^{T}h_{\alpha}(X^{\delta}_{s})ds-\bar{h}_{\alpha}\right]\left(\dot{\bar{h}}_{\alpha^{*}}\right)^{-1} + \left[\frac{1}{\sqrt{T}}W_{T}\right]\left(\dot{\bar{h}}_{\alpha^{*}}\right)^{-1} \nonumber\\
%&=\frac{1}{\dot{\bar{h}}_{\alpha^*}}\sqrt{T}\left[\frac{1}{T}\int_{0}^{T}h_{\alpha}(X^{1}_{s/\delta})ds-\bar{h}_{\alpha}\right]+ \left[\frac{\frac{1}{\sqrt{T}}W_{T}}{\dot{\bar{h}}_{\alpha^*}}\right]\nonumber\\
&=\sqrt{T}\left[\frac{1}{T/\delta}\int_{0}^{T/\delta}h_{\alpha}(X^{1}_{s})ds-\bar{h}_{\alpha}\right]\left(\dot{\bar{h}}_{\alpha^{*}}\right)^{-1} + \left[\frac{1}{\sqrt{T}}W_{T}\right]\left(\dot{\bar{h}}_{\alpha^{*}}\right)^{-1} \nonumber\\
.\label{Eq:CLT_reduced1}
\end{align}
where we also used that $X^{\delta}_{\cdot}=X^{1}_{\cdot/\delta}$ in distribution. By taking $\delta\downarrow 0$ we have by the $L^{1}$ ergodic theorem  that
\[
\lim_{\delta\downarrow 0}\mathbb{E}\left|\frac{1}{T/\delta}\int_{0}^{T/\delta}h_{\alpha}(X^{1}_{s})ds-\bar{h}_{\alpha}\right|=0\qquad\hbox{for any $T\in(0,\infty)$.}
\]

Since $|\alpha^{*}-\alpha|\leq |\bar{\theta}^{\delta}_{T}-\alpha|$ we can apply the consistency of Theorem \ref{T:ConsisitencyReducedLikelihood} to get
\[\lim_{T\rightarrow\infty}\lim_{\delta\downarrow 0}\mathbb{P}_{\alpha}\left(\left|\alpha^*-\alpha\right|>\varepsilon\right)\leq \lim_{T\rightarrow\infty}\lim_{\delta\downarrow 0}\mathbb{P}_{\alpha}\left(\left|\bar\theta_T^\delta-\alpha\right|>\varepsilon\right)=0\qquad\hbox{for any $\varepsilon>0$,}\]
and hence by continuity we have $\dot{\bar{h}}_{\alpha^*}\rightarrow\dot{\bar{h}}_{\alpha}$ in probability as $\delta\downarrow 0$ and then $T\rightarrow\infty$.  Therefore,  by the positive definiteness of $\dot{\bar{h}}_{\alpha}$ we have the limit
\[
\sqrt{T}\left[\frac{1}{T/\delta}\int_{0}^{T/\delta}h_{\alpha}(X^{1}_{s})ds-\bar{h}_{\alpha}\right]\left(\dot{\bar{h}}_{\alpha^{*}}\right)^{-1} \rightarrow 0\qquad\hbox{in probability as $\delta\downarrow0$ and then $T\rightarrow\infty$.}
\]
For similar reasons, Slutsky's theorem implies
\[
\left[\frac{1}{\sqrt{T}}W_{T} \right]\left(\dot{\bar{h}}_{\alpha^{*}}\right)^{-1} \Rightarrow N\left(0,\left(\dot{\bar{h}}^{*}_{\alpha}\dot{\bar{h}}_{\alpha}\right)^{-1}\right)\qquad\hbox{first as $\delta\downarrow 0$ and then $T\rightarrow\infty$.}
\]
Finally, using Slutsky's theorem on the combined expression in (\ref{Eq:CLT_reduced1}) yields the statement of the theorem.
\end{proof}

\section{Simulation Example}\label{SS:simExample}
In this section, we present a simulation example, illustrating the theoretical findings. As an example, we consider the parameter space $\Theta\subset\mathbb R$, and take the true parameter value to be $\alpha=1$. We consider the model
\begin{align}
\nonumber
dX_t^\delta&=\frac 1\delta\left(\theta-X_t^\delta\right)dt+\sqrt\frac{2}{\delta}dB_t\\
dY_t^\delta&=\max(X_t^\delta,\theta)dt+dW_t\ ,
\label{E:simModel}
\end{align}
for $t\leq T= 5$ and $\delta = .01$. By Theorem 2.9 of \cite{KaratzasShreve}, there exists a unique strong solutions to the SDE for $(X^\delta,Y^\delta)$. For the purposes of the numerical example, we assume that the initial distribution of $X$ is its invariant law. If we run the system 2,000 times and each time compute $\bar \theta_t^\delta$, we get the histogram shown in Figure \ref{fig:barThetaHistFit}. For these trials, the MLE has empirical error of $.3180$, which is close to the $\frac{1}{\sqrt T}=.3162$ that is the standard error predicted by equation \eqref{E:CLTReducedLikelihood} in the CLT of Theorem \ref{T:CLTReducedLikelihood} with $\bar h_{\theta} = \theta+\frac{1}{2\sqrt\pi}$ and $\dot{\bar h}_{\theta} = 1$.

To show the effect of Theorem \ref{thm:CLT}, we compare the full log-likelihood to the reduced log-likelihood. The generator of the Ornstein-Uhlenbeck process in \eqref{E:simModel} has a discrete set of eigenvalues such that $\lambda_i^{\theta} = -i$ for $i=0,1,2,3,\dots$ for any $\theta\in\mathbb R$, and admits an orthonormal basis that is given (up to a normalizing constant) by the Hermite polynomials:
\[\psi_i^{\theta}(x) =\frac{1}{C_i}H_i(x-\theta) =  \frac{(-1)^i}{C_i}e^{\frac{(x-\theta)^2}{2}}\frac{d^i}{dx^i}e^{-\frac{(x-\theta)^2}{2}}\]
where $\psi$ is an eigenfunction as defined in Section \ref{SS:SpectralDecomposition}, and $H_i$ is the $ith$ (probabilist) Hermite polynomial (see \cite{Abramowitz}) and $C_i\doteq\sqrt{i!}$ is a normalizing constant. The eigen-coefficients of the function $h_{\theta}(x) = \max(x,\theta)$ are computed as follows:
\begin{align*}
\left<h_\theta,\psi_i^\theta\right>_\theta &= \frac{1}{\sqrt{2\pi}C_i}\int_{-\infty}^\infty \max(x,\theta)H_i(x-\theta)e^{-\frac{(x-\theta)^2}{2}}dx\\
&=\frac{1}{\sqrt{2\pi}C_i}\int_{-\infty}^\infty \left(\theta+\max(x-\theta,0)\right)H_i(x-\theta)e^{-\frac{(x-\theta)^2}{2}}dx\\
&=\frac{1}{\sqrt{2\pi}C_i}\int_{-\infty}^\infty \left(\theta+\max(u,0)\right)H_i(u)e^{-\frac{u^2}{2}}dx\\
&=\theta\cdot\indicator{i=0}+\frac{1}{\sqrt{2\pi}C_i}\int_0^\infty uH_i(u)e^{-\frac{u^2}{2}}dx\ ,
\end{align*}
and so only the zero order term depends on $\theta$ (the last computation used (\ref{Eq:OrthonormalBasis})). The eigen-coefficients are given in Table \ref{T:eigenCoefficients}. There is relatively fast decay among these coefficients, and hence, the limiting variance function $v^2(\theta)$ from Theorem \ref{thm:CLT} can be well-approximated by the first 15 to 20 basis elements.

The simulations and the analysis that follow demonstrate two things:
\begin{itemize}
 \item On one hand, $\rho^{\delta}_{t}(\theta)$ needs to be approximated based on methods such as Monte Carlo. As $\delta$ gets smaller  one needs more samples in order to compute $\rho^{\delta}_{t}(\theta)$ accurately .
\item On the other hand, the computation of $\bar{\rho}^{\delta}_{t}(\theta)$ is straightforward with no Monte Carlo errors. Theorem \ref{thm:CLT} quantifies the deviation of
$\bar{\rho}^{\delta}_{t}(\theta)$ from $\rho^{\delta}_{t}(\theta)$.
\end{itemize}

\begin{table}
\centering
\begin{tabular}{|c|cc cc cc c |c|}
\multicolumn{9}{c}{Eigen-Coefficients for $h_{\theta}(x) = \max(x,\theta)$.}\\
\hline
$\theta$&$i=0$&$i=1$&$i=2$&$i=3$&$i=4$&$i=5$&$i=6$&$v^2(h_\theta)$\\
\hline
.5&0.8989&    0.5000 &   0.2821  &       0&   -0.0814    &     0  &  0.0446& .04723\\
1&1.3989 &   0.5000 &   0.2821  &       0 &  -0.0814   &      0   & 0.0446 & .04723\\
1.5&1.8989&    0.5000&    0.2821  &       0&   -0.0814&         0 &   0.0446  & .04723\\
\hline
\end{tabular}
\vspace{.2cm}
\caption{The eigen-coefficients for $h_{\theta}(x) = \max(x,\theta)$ using the (normalized) Hermite polynomials. The limiting variance as predicted by Theorem \ref{thm:CLT} is given the last column, and is well-approximated by the first 15 to 20 basis elements. For this example, changes in $\theta$ only affect the first eigenmode.}
\label{T:eigenCoefficients}
\end{table}

To compute ${\rho}^{\delta}_{t}(\theta)$ we use Sequential Monte Carlo (SMC). Namely,
 we take independent samples $(X^{\delta,\ell})_{\ell=1}^N$ for some $N<\infty$ where each $X^{\delta,\ell}=_dX^\delta$, and our full log-likelihood is approximated as
\[\rho_t^\delta(\theta)\approx  \log\left(\frac 1N \sum_{\ell=1}^Ne^{-\frac 12\int_0^th_{\theta}^2(X_s^{\delta,\ell})ds+\int_0^th_{\theta}(X_s^{\delta,\ell})dY_s^\delta}\right)\ .\]
Estimation using SMC samples will have error that is of order $1/\sqrt N$, and with an asymptotically normal distribution (see \cite{delMoral2001,CMR2005})
\[\sqrt N\left(\log\left(\frac 1N \sum_{\ell=1}^Ne^{-\frac 12\int_0^th_{\theta}^2(X_s^{\delta,\ell})ds+\int_0^th_{\theta}(X_s^{\delta,\ell})dY_s^\delta}\right)-\rho_t^\delta(\theta) \right)\Rightarrow \mathcal Z(Y^\delta)\]
as $N\rightarrow \infty$, where $\mathcal Z(Y^\delta)$ is a normal random variable whose variance depends on the data $Y^\delta$.

In Figure \ref{fig:histFitsCLT} we see the histograms and fitted normal distributions obtained by looking at $\frac{1}{\sqrt t}\left(\rho_t^\delta(\theta)-\bar\rho_t^\delta(\theta)\right)$.
The solid red line is the density suggested by the CLT of Theorem \ref{thm:CLT}, namely a normal density with mean zero and variance $\delta v^{2}(\theta)$, and the dashed green line is a Gaussian density with mean zero and the empirical standard deviation. The Kolmogorov-Smirnoff test does not reject any of the empirical histogram fits to the green line (at the 99.9\%  confidence level), and the test rejects the histogram fits to the red lines for low confidence values and for different parameters. Heuristically, the difference in these standard errors should be $O(1/\sqrt N)$, \begin{align*}
\hbox{empirical standard error}&=\sqrt{var\left(\log\left(\frac 1N \sum_{\ell=1}^Ne^{-\frac 12\int_0^th_{\theta}^2(X_s^{\delta,\ell})ds+\int_0^th_{\theta}(X_s^{\delta,\ell})dY_s^\delta}\right)-\bar \rho_t^\delta(\theta)\right)}\\
&\leq \sqrt{var\left(\log\left(\frac 1N \sum_{\ell=1}^Ne^{-\frac 12\int_0^th_{\theta}^2(X_s^{\delta,\ell})ds+\int_0^th_{\theta}(X_s^{\delta,\ell})dY_s^\delta}\right)-\rho_t^\delta(\theta)\right)}\\
&\hspace{1cm}+\sqrt{var\left(\rho_t^\delta(\theta)-\bar\rho_t^\delta(\theta)\right)}\\
&\simeq O\left(\frac{1}{\sqrt N}\right)+\sqrt{\delta v^2(\theta)}\ .
\end{align*}
Indeed, from Table \ref{T:CLTvar} we see that the difference between the standard error of the CLT of Theorem \ref{thm:CLT} and the empirical standard error is of order $1/\sqrt N$, which indicates the strong possibility that the aforementioned error due to approximation via SMC is significant when estimating the log-likelihood.
\begin{table}
\centering
\begin{tabular}{|c|c|c|c|}
\multicolumn{4}{c}{Statistics for Simulations of  $\frac{1}{\sqrt t}\left(\rho_t^\delta(\theta)-\bar\rho_t^\delta(\theta)\right)$ with $\delta=.01$.}\\
\hline
$\theta$&$\sqrt{\delta v^2(\theta)}$&empirical std-err.&$\hbox{empirical std-err.}-\sqrt{\delta v^2(\theta)}$\\
\hline
.5&.02174&.0346&.0128\\
1&.02174&.0322&.0105\\
1.5&.02174&.0354&.0137\\
\hline
\end{tabular}
\vspace{.2cm}
\caption{For the model in \eqref{E:simModel}, 300 simulations of the quantity $\frac{1}{\sqrt t}\left(\rho_t^\delta(\theta)-\bar\rho_t^\delta(\theta)\right)$ computed with $N=2,000$, this table shows the standard error predicted by Theorem \ref{thm:CLT}, the empirical standard error, and the difference between the two. It turns out that $\frac{1}{\sqrt N} = \frac{1}{\sqrt{2,000}}=.0224$ which is of the same order as the entries in the 4th column, and so we conclude that the green line in Figure \ref{fig:histFitsCLT} has extra variance that is due to the SMC sampling error.}
\label{T:CLTvar}
\end{table}

Figure \ref{fig:histFitsCLT} indicates the following: not only is the reduced estimate of the log-likelihood close to the full likelihood,
but it might be a better estimate than a Monte Carlo approximation of the full log-likelihood. The enlarged Monte Carlo error in the computation of $\rho^{\delta}_{t}(\theta)$
can be seen in Figure \ref{fig:histFitsCLTlowDelta}, which is the same experiment, except with $\delta = .001$ (i.e. the same number of particles at $N=2,000$). In Figure \ref{fig:histFitsCLTlowDelta} it is important to notice how the Monte Carlo error is a significantly greater proportion of the total empirical error. If we want Figure \ref{fig:histFitsCLTlowDelta} to look similar to Figure \ref{fig:histFitsCLT}, then we would need to increase  $N$ by a factor of 10. Such an increase in the number of particles would significantly increase the computation time. Hence, the reduced filter outperforms the direct Monte Carlo filter for $\delta\ll1$, which is a motivation for this paper.

\begin{figure}[htbp] %  figure placement: here, top, bottom, or page
   \centering
   \includegraphics[height=3in, width=6in]{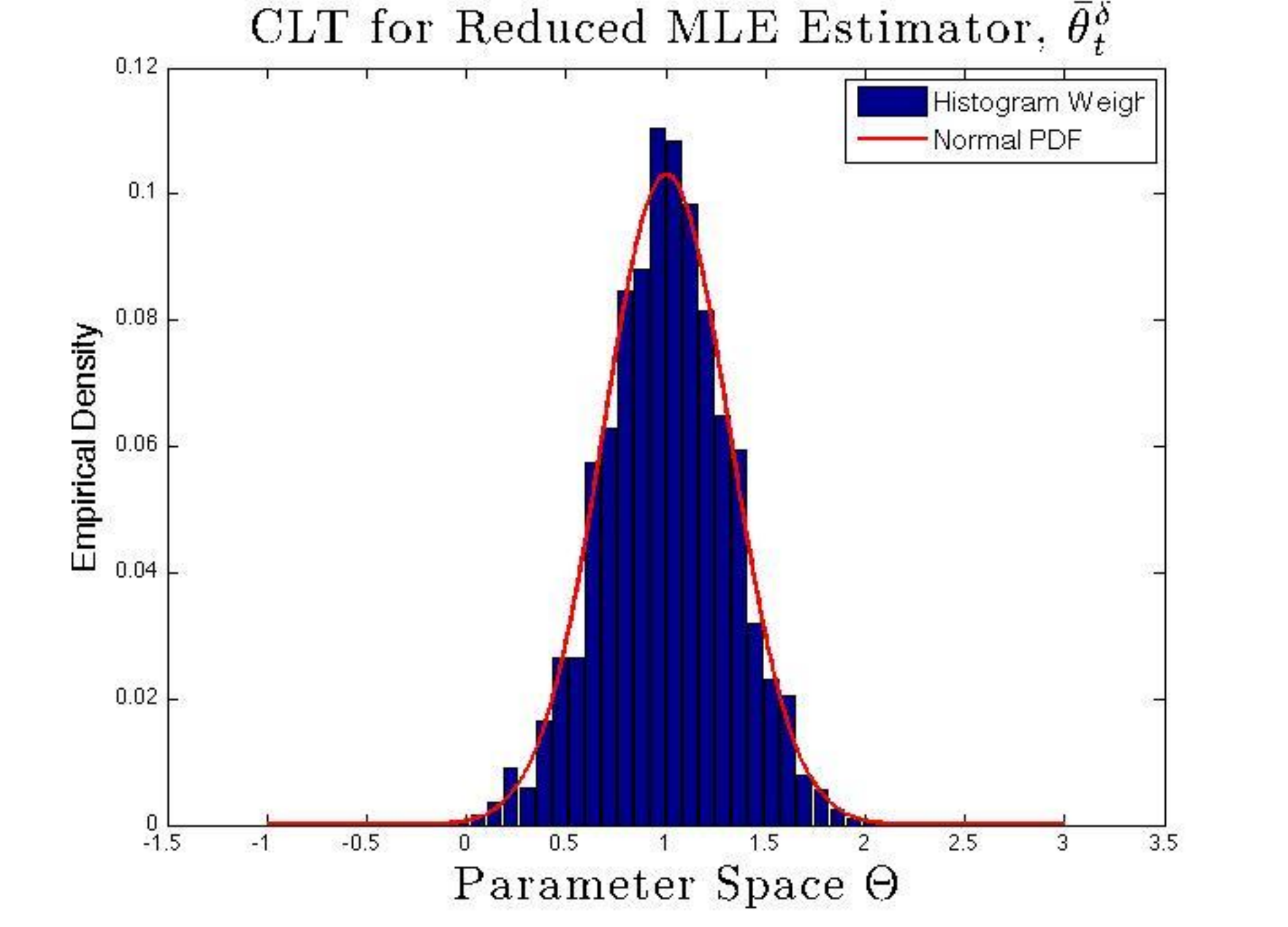}
  \caption{The empirical distribution of the reduced estimator $\bar\theta_t^\delta$, for which the asymptotic distribution is close to Gaussian. We run the system 2,000 times and each time compute $\bar \theta_t^\delta$. For these trials, the MLE has empirical error of $.3180$, which is close to the $\frac{1}{\sqrt T}=.3162$ that is the standard error predicted by equation \eqref{E:CLTReducedLikelihood} in the CLT of Theorem \ref{T:CLTReducedLikelihood} with $\bar h_{\theta} = \theta+\frac{1}{2\sqrt\pi}$ and $\dot{\bar h}_{\theta} = 1$}
   \label{fig:barThetaHistFit}
\end{figure}

\begin{figure}[htbp] %  figure placement: here, top, bottom, or page
   \centering
   \includegraphics[height=3in,width=6.5in]{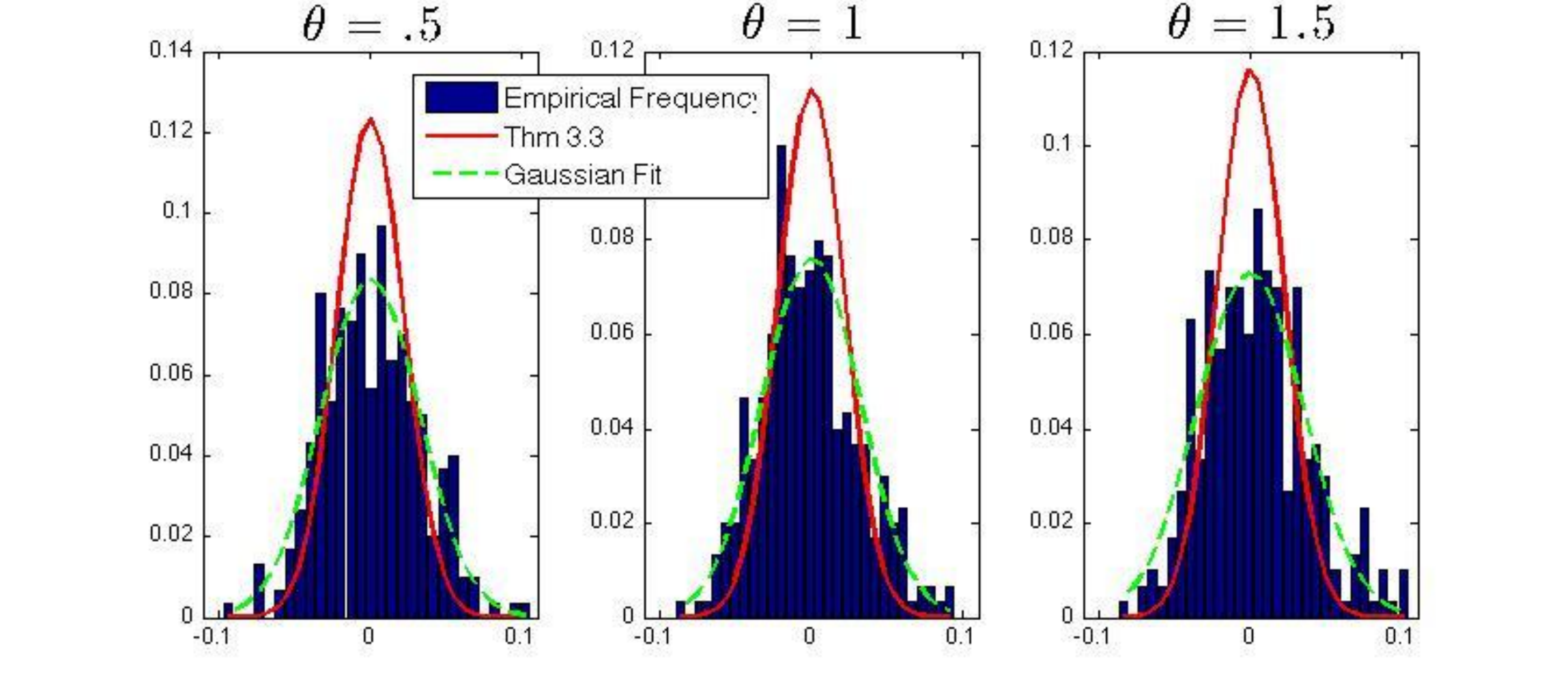}
  \caption{Histograms of the quantity $\frac{1}{\sqrt t}\left(\rho_t^\delta(\theta)-\bar\rho_t^\delta(\theta)\right)$ for $\theta=.5,1,1.5$ with the true parameter being $\alpha=1$. The solid red line is the limiting Gaussian distribution of Theorem \ref{thm:CLT}, and the dashed green line is a Gaussian fit to the histogram. The green line has a slightly greater standard deviation because $\rho_t^\delta(\theta)$ needs to be approximated with Monte Carlo sampling and a discrete time scheme, and hence, the empirical distribution has some additional variance. However, the Kolmogorov-Smirnoff test does not reject the hypothesis that the histogram is a Gaussian distribution.}
   \label{fig:histFitsCLT}
\end{figure}

\begin{figure}[htbp] %  figure placement: here, top, bottom, or page
   \centering
   \includegraphics[height=3in,width=6.5in]{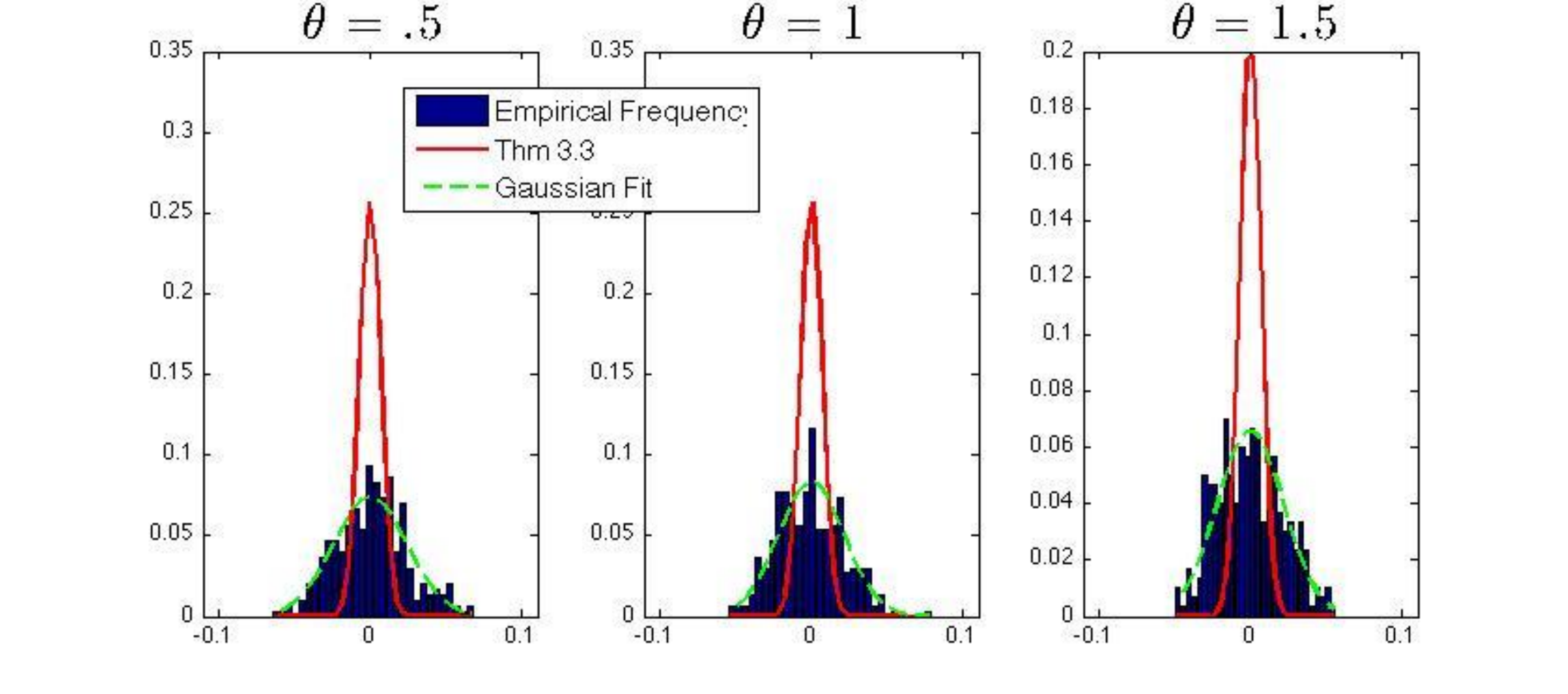}
  \caption{The same histograms as those in Figure \ref{fig:histFitsCLT}, except for the experiment run with $\delta=.001$. Notice how the Monte Carlo error is a greater proportion of the total variance. This illustrates how for $\delta $ small, the reduced likelihood can be more accurate than a Monte Carlo approximation.}
   \label{fig:histFitsCLTlowDelta}
\end{figure}

%%%%%%%%%%%%%%%%%%%%%%%%%%%%%%%%%%%%%%%%%%%%%%%%%%%%%%%%%%%%%%%%%%%%%%%%%%%%%%%%%%%%%%%
\section{Conclusions \& Future Work}\label{S:Conclusions}
This paper studies parameter estimation with partially observed diffusions of models with multiple time scales. This problem is primarily an application of ergodic theory to nonlinear filtering. We prove convergence in probability of the nonlinear filter and of the conditional (on the observations) log-likelihood. Furthermore, we prove a central limit theorem for the log-likelihood. These results justify the use of a log-likelihood of reduced dimension for the purposes of parameter estimation, which is simpler to implement and has faster runtime in computations. Consistency and asymptotic normality for the MLE of the reduced log-likelihood is also obtained, and simulation studies are presented to show how the reduced log-likelihood can outperform a direct Monte Carlo filter when $\delta \ll1$.

It is plausible that some of the results presented in this paper can be generalized. For instance, it is possible that the CLT can be proven with the removal of the assumption of $h_\theta$ being bounded, which is also supported by the simulation example of Section \ref{SS:simExample}. Regarding the generalization of Theorem \ref{thm:CLT}, it may be possible to prove a version of the theorem using generalized spectral theory rather than assuming a discrete spectrum with orthonormal eigenfunctions, but modifications to the techniques developed in this paper will be needed.

\appendix

\section{Proof of Theorem \ref{T:FilterConvergence1}}
\label{A:FilterConvergence1}

The proof of Theorem \ref{T:FilterConvergence1} follows by the results of \cite{ImkellerSriPerkowskiYeong2012}, see also \cite{ParkSriSowers2008, ParkRozovskySowers2010,ParkSriSowers2011,ImkellerSriPerkowskiYeong2012} after we adjust for the parameter mismatch. In particular, the main difference that Theorem \ref{T:FilterConvergence1} has when compared to the previous works is that
under the measure parameterized  by the true parameter value (i.e. the measure under which the observations are made) the filters will converge for \textit{any} parameter value.  Moreover, we also need to prove that the convergence of the filters is for test functions in the space the space $\mathcal{A}_{\eta}^{\theta}$, whereas the results in \cite{ImkellerSriPerkowskiYeong2012} use bounded and smooth test functions.

%%%%%%%%%%%%%%%%%%%%%%%%%%%%%
\begin{lemma}
 \label{L:FilterConvergence4}
Let us consider $f\in C^{4}_{b}(\mathcal X)$ and assume  Conditions \ref{A:Assumption1} and \ref{A:Assumption2}. For any $\theta,\alpha\in\Theta$, we have uniformly in $t\in[0,T]$
\[
\mathbb E_{\alpha}\left|\phi_t^{\delta,\theta}[f]-\bar{\phi}_t^{\delta,\theta}[f]\right|\rightarrow 0\qquad\hbox{as }\delta\rightarrow0\ .
\]

\end{lemma}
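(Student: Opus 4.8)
The plan is to deduce the statement from the already-established homogenization of the (unnormalized) filter in the \emph{matching-parameter} regime, transferring it to the measure $\mathbb P_\alpha$ by a change of measure that exploits the robustness of the filtering equations. Concretely, by uniqueness of the solution of the Zakai equation \eqref{Eq:Zakai} (and of \eqref{Eq:avgZakai}), each of $\phi^{\delta,\theta}_t[f]$, $\bar\phi^{\delta,\theta}_t[f]$ and the likelihood $\phi^{\delta,\alpha}_t[1]$ is, for fixed $\delta$, a deterministic measurable functional of the observation path $(Y^\delta_s)_{s\le t}$ alone; denote by $G^\delta_t$ and $L^\delta_t$ the functionals representing $\phi^{\delta,\theta}_t[f]-\bar\phi^{\delta,\theta}_t[f]$ and $\phi^{\delta,\alpha}_t[1]$. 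This robustness is the only place where the two parameters $\theta\ne\alpha$ genuinely interact.

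First I would perform the change of measure. Because $Z^{\delta,\alpha}$ is a $\mathbb P^*_\alpha$-martingale for the full filtration $(\mathcal F_t)$, conditioning on the sub-filtration $\mathcal Y^\delta_t$ shows that the restriction of $d\mathbb P_\alpha/d\mathbb P^*_\alpha$ to $\mathcal Y^\delta_t$ equals $\mathbb E^*_\alpha[Z^{\delta,\alpha}_t\mid\mathcal Y^\delta_t]=\phi^{\delta,\alpha}_t[1]=L^\delta_t$; hence $\mathbb E_\alpha|G^\delta_t|=\mathbb E^*_\alpha[L^\delta_t\,|G^\delta_t|]$. Now $L^\delta_t|G^\delta_t|$ is a functional of $(Y^\delta_s)_{s\le t}$, and under every reference measure $\mathbb P^*_{\theta'}$ ($\theta'\in\Theta$) the process $Y^\delta$ is a standard Brownian motion, so $\mathbb E^*_\alpha[L^\delta_t|G^\delta_t|]=\mathbb E^*_\theta[L^\delta_t|G^\delta_t|]$; under $\mathbb P^*_\theta$ the functional $G^\delta_t$ now represents the matching-parameter filter difference $\phi^{\delta,\theta}_t[f]-\bar\phi^{\delta,\theta}_t[f]$ driven by the $\mathbb P^*_\theta$-Brownian motion $Y^\delta$. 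Applying H\"older with the conjugate exponents $p,q$ of Condition \ref{A:Assumption2}, and using conditional Jensen together with the $\mathbb E^*_{\theta'}|Z^{\delta,\theta'}_t|^q$-bound of Condition \ref{A:Assumption2} (which is insensitive to replacing $\alpha$ by $\theta$ since $L^\delta_t$ is a functional of the Brownian path $Y^\delta$), one obtains, uniformly in $t\in[0,T]$,
\[
\mathbb E_\alpha\bigl|\phi^{\delta,\theta}_t[f]-\bar\phi^{\delta,\theta}_t[f]\bigr|\ \le\ C\,\Bigl(\mathbb E^*_\theta\bigl|\phi^{\delta,\theta}_t[f]-\bar\phi^{\delta,\theta}_t[f]\bigr|^p\Bigr)^{1/p}.
\]
It therefore remains to bound the right-hand side, which is a statement only about the matching-parameter filter.

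For that last step I would invoke the homogenization of the Zakai equation. For $f\in C^4_b(\mathcal X)$ this is essentially the content of \cite{ImkellerSriPerkowskiYeong2012} (see also \cite{ParkSriSowers2008,ParkRozovskySowers2010,ParkSriSowers2011}): one solves the Poisson equation $\mathcal L_\theta\chi_\theta=\bar f_\theta-f$, whose solution has at most polynomial growth by the estimates of \cite{PardouxVeretennikov2}, uses it as a corrector to rewrite $\phi^{\delta,\theta}_t[f]-\bar\phi^{\delta,\theta}_t[f]$ (equivalently, represents $\phi^{\delta,\theta}_t[f]$ through a backward/dual SDE), and controls the remainder by It\^o's formula and a Gronwall estimate using Condition \ref{A:Assumption1}; one more change of measure (from $\mathbb P^*_\theta$ back to $\mathbb P_\theta$, controlled by the $\mathbb E_\theta|Z^{\delta,\theta}_t|^{-q}$-bound of Condition \ref{A:Assumption2}) lets one quote the version of the result that appears in those references. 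The upshot is $\sup_{t\in[0,T]}\mathbb E^*_\theta|\phi^{\delta,\theta}_t[f]-\bar\phi^{\delta,\theta}_t[f]|^p\to0$ as $\delta\downarrow0$, which combined with the display above finishes the proof.

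I expect the main technical obstacle to be the uniform-in-$(t,\delta)$ bookkeeping of moments needed to make the H\"older steps close: one must track how many moments of the likelihood $\phi^{\delta,\cdot}_t[1]$, of the corrector $\chi_\theta(X^\delta_t)$, and of the exponential weights are available from Conditions \ref{A:Assumption1} and \ref{A:Assumption2}, and upgrade the in-probability (or low-order) convergence of \cite{ImkellerSriPerkowskiYeong2012} to $L^p$-convergence with the conjugate exponent $p$ while remaining uniform in $t\le T$. This is routine but delicate, and it is precisely why the sharper result for the (possibly unbounded) eigenfunctions is handled separately through the test-function class $\mathcal A^\theta_\eta$ rather than here.
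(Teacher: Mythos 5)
Your proposal is correct and follows essentially the same route as the paper: a change of measure to the reference measure via $Z^{\delta,\alpha}_t$ (equivalently $\phi^{\delta,\alpha}_t[1]$), H\"older with the exponents of Condition \ref{A:Assumption2}, the observation that the filters are measurable functionals of the path $Y^\delta$, which is Brownian under every reference measure $\mathbb P^*_{\theta'}$, so that the mismatched-parameter expectation can be replaced by the matching-parameter one, and a final H\"older back to $\mathbb P_\theta$ so that Lemma 6.6 of \cite{ImkellerSriPerkowskiYeong2012} applies. The only cosmetic difference is that the paper applies H\"older under $\mathbb P^*_\alpha$ before switching reference measures and quotes the $L^{p^2}$-convergence of the cited lemma directly rather than re-sketching the corrector argument.
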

\begin{proof}
By H\"{o}lder inequality, for $p,q<0$ with $\frac 1p+\frac 1q=1$ we have
\begin{align}
\mathbb E_{\alpha}\left|\phi_t^{\delta,\theta}[f]-\bar{\phi}_t^{\delta,\theta}[f]\right|&=
\mathbb E^{*}_{\alpha}\left[Z^{\delta,\alpha}_{t}\left|\phi_t^{\delta,\theta}[f]-\bar{\phi}_t^{\delta,\theta}[f]\right|\right]\nonumber\\
&\leq
 \left(\mathbb E^{*}_{\alpha}\left|Z^{\delta,\alpha}_{t}\right|^{q}\right)^{1/q}\left(\mathbb E^{*}_{\alpha}\left|\phi_t^{\delta,\theta}[f]-\bar{\phi}_t^{\delta,\theta}[f]\right|^{p}\right)^{1/p}\nonumber\\
&=
 \left(\mathbb E^{*}_{\alpha}\left|Z^{\delta,\alpha}_{t}\right|^{q}\right)^{1/q}\left(\mathbb E^{*}_{\theta}\left|\phi_t^{\delta,\theta}[f]-\bar{\phi}_t^{\delta,\theta}[f]\right|^{p}\right)^{1/p}\nonumber\\
 &\leq
 \left(\mathbb E^{*}_{\alpha}\left|Z^{\delta,\alpha}_{t}\right|^{q}\right)^{1/p} \left(\mathbb E_{\theta}\left|Z^{\delta,\theta}_{t}\right|^{-q}\right)^{1/(pq)}\left(\mathbb E_{\theta}\left|\phi_t^{\delta,\theta}[f]-\bar{\phi}_t^{\delta,\theta}[f]\right|^{p^{2}}\right)^{1/p^{2}}\nonumber
\end{align}
which goes to zero as $\delta\downarrow 0$ by Condition \ref{A:Assumption2} and Lemma 6.6 in \cite{ImkellerSriPerkowskiYeong2012}. The third line,  i.e., $\mathbb E_{\alpha}^*\left|\phi_t^{\delta,\theta}[f]-\bar{\phi}_t^{\delta,\theta}[f]\right|^{p}=\mathbb E_{\theta}^*\left|\phi_t^{\delta,\theta}[f]-\bar{\phi}_t^{\delta,\theta}[f]\right|^{p}$, follows because both $\phi_t^{\delta,\theta}$ and $\bar\phi_t^{\delta,\theta}$ are functionals of $Y^\delta_{\cdot}$ (and no other random variable), and $Y^\delta$ is a Brownian motion under both measures $\mathbb P_{\alpha}^*$ and $\mathbb{P}_{\theta}^*$. This concludes the proof of the lemma.
\end{proof}

%%%%%%%%%%%%%%%%%%%%%%%%%%%%%%

%%%%%%%%%%%%%%%%%%%%%%%%%%%%%%
We conclude with the proof of Theorem \ref{T:FilterConvergence1}.

\begin{proof}[Proof of Theorem \ref{T:FilterConvergence1}]
Lemma \ref{L:FilterConvergence4} implies convergence in probability:
\[\mathbb P_{\alpha}\left(\left|\phi_t^{\delta,\theta}[f]-\bar\phi_t^{\delta,\theta}[f]\right|>\eps\right)\leq \frac{1}{\eps}\mathbb E_{\alpha}\left|\phi_t^{\delta,\theta}[f]-\bar\phi_t^{\delta,\theta}[f]\right|\rightarrow 0\qquad\forall\eps>0\]
for bounded $f$. Let us now prove the second part of the theorem. We prove it first for $f\in C^{4}_{b}(\mathcal{X})$. Then, we prove it under the assumption that there exists $\eta>0$ such that $f\in\mathcal{A}_{\eta}^{\theta}$. So, let us assume that $f\in C^{4}_{b}(\mathcal{X})$. It is clear that by ergodicity we have
\[\lim_{\delta\downarrow 0}\left|\bar \pi_t^{\delta,\theta}[f]-\bar \pi_t^{\theta}[f]\right|=0\hspace{.5cm}\hbox{in $\mathbb P_\alpha$ probability}\ ,\]
so it remains to prove that
\[\lim_{\delta\downarrow 0}\mathbb E_{\alpha}\left(\pi_t^{\delta,\theta}[f]-\bar \pi_t^{\delta, \theta}[f]\right)^2=0\ . \]
For this purpose, H\"{o}lder inequality gives

\begin{align}
\mathbb E_{\alpha}\left|\pi_t^{\delta,\theta}[f]-\bar{\pi}_t^{\delta,\theta}[f]\right|^{2}&=
\mathbb E^{*}_{\alpha}\left[Z^{\delta,\alpha}_{t}\left|\pi_t^{\delta,\theta}[f]-\bar{\pi}_t^{\delta,\theta}[f]\right|^{2}\right]\nonumber\\
&\leq
 \left(\mathbb E^{*}_{\alpha}\left|Z^{\delta,\alpha}_{t}\right|^{q}\right)^{1/q}\left(\mathbb E^{*}_{\alpha}\left|\pi_t^{\delta,\theta}[f]-\bar{\pi}_t^{\delta,\theta}[f]\right|^{2p}\right)^{1/p}\nonumber\\
&\leq
 \left(\mathbb E^{*}_{\alpha}\left|Z^{\delta,\alpha}_{t}\right|^{q}\right)^{1/q}\left(\mathbb E^{*}_{\theta}\left|\pi_t^{\delta,\theta}[f]-\bar{\pi}_t^{\delta,\theta}[f]\right|^{2p}\right)^{1/p}\nonumber\\
 &\leq
 \left(\mathbb E^{*}_{\alpha}\left|Z^{\delta,\alpha}_{t}\right|^{q}\right)^{1/q} \left(\mathbb E_{\theta}\left|Z^{\delta,\theta}_{t}\right|^{-q}\right)^{1/(pq)}\left(\mathbb E_{\theta}\left|\pi_t^{\delta,\theta}[f]-\bar{\pi}_t^{\delta,\theta}[f]\right|^{2p^{2}}\right)^{1/p^{2}}\nonumber
\end{align}
which goes to zero as $\delta\downarrow 0$ by Condition \ref{A:Assumption2} and Corollary 6.9 in \cite{ImkellerSriPerkowskiYeong2012}. The third line,  i.e., $\mathbb E_{\alpha}^*\left|\phi_t^{\delta,\theta}[f]-\bar{\phi}_t^{\delta,\theta}[f]\right|^{2p}=\mathbb E_{\theta}^*\left|\phi_t^{\delta,\theta}[f]-\bar{\phi}_t^{\delta,\theta}[f]\right|^{2p}$, follows because both $\phi_t^{\delta,\theta}$ and $\bar\phi_t^{\delta,\theta}$ are functionals of $Y^\delta_{\cdot}$ (and no other random variable), and $Y^\delta$ is a Brownian motion under both measures $\mathbb P_{\alpha}^*$ and $\mathbb{P}_{\theta}^*$. This completes the proof for $f\in C^{4}_{b}(\mathcal{X})$.

Let us complete the proof of the theorem by assuming that there exists an $\eta>0$ such that $f\in\mathcal{A}_{\eta}^\theta$. For $n\in\mathbb{N}$,   define
\[
u_{n}(x)=
\begin{cases}
x &,|x|\leq n\\
n \textrm{ sign}(x)&,|x|>n
\end{cases}
\]
and set $f_{n}(x)=u_{n}(f(x))$. Analogously define
\[
\pi^{\delta,\theta}_{t}[f_{n}]\doteq\mathbb E_{\theta}\left[f_{n}(X_t^\delta)\Big|\mathcal Y_t^\delta\right],\quad \bar{f}_{n,\theta}=\int_{\mathcal{X}}f_{n}(x)\mu_{\theta}(dx).
\]
Since $f_{n}$ is bounded, we already know that $\lim_{\delta\downarrow 0}\mathbb E_{\alpha}\left|\pi_t^{\delta,\theta}[f_{n}]-\bar \pi_t^{\theta}[f_{n}]\right|^2=0$.
So, it is enough to prove that
\[
\lim_{n\rightarrow\infty}\limsup_{\delta\downarrow 0}\mathbb E_{\alpha}\left|\pi_t^{\delta,\theta}[f]-\pi_t^{\delta,\theta}[f_{n}]\right|^2=0
\]
and
\[
\lim_{n\rightarrow\infty}\left|\bar \pi_{t}^{\theta}[f]-\bar \pi_t^{\theta}[f_{n}]\right|^2=0\ .
\]
Both of these statements follow from the observation
\[
|f(x)-f_{n}(x)|^{2p^{2}}\leq |f(x)|^{2p^{2}}\indicator{|f(x)|>n}\leq |f(x)|^{2+\eta}\indicator{|f(x)|>n}\leq n^{-\eta}|f(x)|^{2+\eta}
\]
In particular, we have
\begin{align}
&\lim_{n\rightarrow\infty}\limsup_{\delta\downarrow 0}\mathbb E_{\alpha}\left(\pi_t^{\delta,\theta}[f]-\pi_t^{\delta,\theta}[f_{n}]\right)^2=
\lim_{n\rightarrow\infty}\limsup_{\delta\downarrow 0}\mathbb E_{\alpha}\left(\mathbb{E}_{\theta}\left[f(X^{\delta}_{t})-f_{n}(X^{\delta}_{t})\Big| \mathcal{Y}^{\delta}_{t}\right]\right)^2\nonumber\\
&\qquad\leq
\lim_{n\rightarrow\infty}\limsup_{\delta\downarrow 0}\mathbb E_{\alpha}\mathbb{E}_{\theta}\left[\left|f(X^{\delta}_{t})-f_{n}(X^{\delta}_{t})\right|^2\Big| \mathcal{Y}^{\delta}_{t}\right]\nonumber\\
&\qquad\leq
\lim_{n\rightarrow\infty}\limsup_{\delta\downarrow 0}\left(\mathbb E_{\alpha}^*\left|Z_t^{\delta,\alpha}\right|^{q}\right)^{1/q}\left(\mathbb E_{\theta}\left|Z_t^{\delta,\theta}\right|^{-q}\right)^{1/(pq)}\left(\mathbb E_{\theta}\left|f(X_t^\delta)- f_n(X_t^\delta)\right|^{2p^{2}}\right)^{1/p^{2}}\nonumber\\
&\qquad\leq
2\lim_{n\rightarrow\infty}\limsup_{\delta\downarrow 0}n^{-\eta/p^{2}}\left(\mathbb E_{\alpha}^*\left|Z_t^{\delta,\alpha}\right|^{q}\right)^{1/q}\left(\mathbb E_{\theta}\left|Z_t^{\delta,\theta}\right|^{-q}\right)^{1/(pq)}\left( \mathbb  E_{\theta}\left|f(X^{\delta}_{t})\right|^{2+\eta}\right)^{1/p^{2}}\nonumber\\
&\qquad=0
\end{align}
and clearly $\lim_{n\rightarrow\infty}\left(\bar \pi_{t}^{\theta}[f]-\bar \pi_t^{\theta}[f_{n}]\right)^2=\lim_{n\rightarrow\infty}\left(\bar f_{\theta}-\bar f_{n,\theta}\right)^2=0$. This concludes the proof of the theorem.
\end{proof}

%%%%%%%%%%%%%%%%%%%%%%%%%%%%%%%%%%%%
\section{Some Convergence Results for the Posterior Expectation of the Eigenfunctions}\label{A:lemmasAndProps}
In this subsection, we collect a number of results associated with the asymtpotic behavior of the posterior for the eigenfunctions and their correlation as $\delta\downarrow 0$. Recall that $\alpha$ denotes the true parameter value.

\begin{lemma}
\label{L:parameterMismatch}
 Suppose $h_\theta$ is uniformly bounded over $\theta$ by a constant $C_h<\infty$ such that $\sup_{\theta\in\Theta}\|h_\theta\|_\infty\leq C_h$. Then there exists another constant $C_\mu<\infty$ such that
\[\sup_{\alpha,\theta\in\Theta}\mathbb E_\theta^*\left[\left(\frac{\phi_t^{\delta,\alpha}[1]}{\phi_t^{\delta,\theta}[1]}\right)^2\right]\leq C_\mu\ ,\]
and for any $f\in \mathcal A_\theta^\eta$ with $f\geq 0$ we have
\[\mathbb E_\alpha \pi_t^{\delta,\theta}[f]\leq  \frac{\mathbb E_\theta\left[f(X_t^\delta)\right]}{2}\left(e^{tC_h^2 }+C_\mu\right)\]
for any $\theta,\alpha\in\Theta$ and for any $t\in[0,T]$.
\end{lemma}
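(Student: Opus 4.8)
The plan is to prove the two bounds in sequence, the first feeding into the second. Three facts are used throughout. \textbf{(a)} $\phi_t^{\delta,\theta}[1]$, $\phi_t^{\delta,\alpha}[1]$ and $\phi_t^{\delta,\theta}[f]$ are deterministic functionals of the observation path $(Y_s^\delta)_{s\le t}$, which has the same law --- standard $n$-dimensional Wiener measure on $[0,t]$ --- under both $\mathbb{P}_\alpha^*$ and $\mathbb{P}_\theta^*$; this is the device already exploited in Lemma \ref{L:FilterConvergence4}, and it lets us freely swap $\mathbb{E}_\alpha^*$ for $\mathbb{E}_\theta^*$ on any such functional. \textbf{(b)} Conditional Jensen applied to $\phi_t^{\delta,\theta}[1]=\mathbb{E}_\theta^*[Z_t^{\delta,\theta}\mid\mathcal Y_t^\delta]$ (and to $\phi_t^{\delta,\alpha}[1]$). \textbf{(c)} The elementary moment bound: for progressively measurable $\mathbb{R}^n$-valued $g$ with $|g|\le C_h$ and any real $r$, $\mathbb{E}^*\exp\{r\int_0^t g_s\cdot dY_s-\frac r2\int_0^t|g_s|^2ds\}\le\exp\{\frac{r(r-1)}{2}C_h^2 t\}$, obtained by completing the square and using that $\mathcal E_t(r\int_0^\cdot g\,dY)$ is a true martingale by Novikov's criterion --- here boundedness of $h_\theta$ is essential.

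For the first bound, put $L^\alpha\doteq\phi_t^{\delta,\alpha}[1]$ and $L^\theta\doteq\phi_t^{\delta,\theta}[1]$. By Cauchy--Schwarz, $\mathbb{E}_\theta^*[(L^\alpha/L^\theta)^2]\le(\mathbb{E}_\theta^*[(L^\alpha)^4])^{1/2}(\mathbb{E}_\theta^*[(L^\theta)^{-4}])^{1/2}$. Using (a) to replace $\mathbb{E}_\theta^*$ by $\mathbb{E}_\alpha^*$ in the first factor, then (b) and (c) with $r=4$, gives $\mathbb{E}_\alpha^*[(L^\alpha)^4]\le\mathbb{E}_\alpha^*[(Z_t^{\delta,\alpha})^4]\le e^{6C_h^2 t}$; applying (b) to the convex function $u\mapsto u^{-4}$ and then (c) with $r=-4$ gives $\mathbb{E}_\theta^*[(L^\theta)^{-4}]\le\mathbb{E}_\theta^*[(Z_t^{\delta,\theta})^{-4}]\le e^{10C_h^2 t}$. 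Hence $\mathbb{E}_\theta^*[(L^\alpha/L^\theta)^2]\le e^{8C_h^2 t}\le e^{8C_h^2 T}$, uniformly in $\alpha,\theta$.

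For the second bound, since $d\mathbb{P}_\alpha/d\mathbb{P}_\alpha^*=Z^{\delta,\alpha}$, $Z^{\delta,\alpha}$ is a $\mathbb{P}_\alpha^*$-martingale, and $\pi_t^{\delta,\theta}[f]$ is $\mathcal Y_t^\delta$-measurable, we get $\mathbb{E}_\alpha\pi_t^{\delta,\theta}[f]=\mathbb{E}_\alpha^*[\phi_t^{\delta,\alpha}[1]\,\pi_t^{\delta,\theta}[f]]=\mathbb{E}_\alpha^*[AF]$ with $A\doteq L^\alpha/L^\theta\ge0$ and $F\doteq\phi_t^{\delta,\theta}[f]\ge0$ (here $f\ge0$); by (a) this equals $\mathbb{E}_\theta^*[AF]$. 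The inequality $AF\le\frac12 F(1+A^2)$, valid because $F\ge0$, splits this into two pieces. The first, $\frac12\mathbb{E}_\theta^*[F]=\frac12\mathbb{E}_\theta^*[Z_t^{\delta,\theta}f(X_t^\delta)]=\frac12\mathbb{E}_\theta[f(X_t^\delta)]$, is dominated by $\frac12 e^{tC_h^2}\mathbb{E}_\theta[f(X_t^\delta)]$. For the second piece, $\frac12\mathbb{E}_\theta^*[A^2F]$, the tower property ($A^2$ is $\mathcal Y_t^\delta$-measurable) gives $\mathbb{E}_\theta^*[A^2F]=\mathbb{E}_\theta^*[A^2 Z_t^{\delta,\theta}f(X_t^\delta)]$; since $Y^\delta\perp X^\delta$ under $\mathbb{P}_\theta^*$, conditioning on the whole path $X^\delta$ and pulling out the $X$-measurable, nonnegative factor $f(X_t^\delta)$ yields
\[
\mathbb{E}_\theta^*[A^2 Z_t^{\delta,\theta}f(X_t^\delta)]\le\Big(\sup_{x}\mathbb{E}_\theta^*\!\left[A^2 Z_t^{\delta,\theta}\,\big|\,X^\delta=x\right]\Big)\,\mathbb{E}_\theta[f(X_t^\delta)].
\]
Conditionally on $X^\delta=x$, $\mathbb{E}_\theta^*[A^2 Z_t^{\delta,\theta}\mid X^\delta=x]$ is a Wiener-measure integral of the observation functional $A^2$ against the Girsanov density $\exp\{\int_0^t h_\theta(x_s)\cdot dY_s-\frac12\int_0^t|h_\theta(x_s)|^2ds\}$; Cauchy--Schwarz, (c) with $r=2$ for the density, and a bound on $\mathbb{E}^*[A^4]$ obtained by Cauchy--Schwarz and (c) exactly as in the first part, give a bound of the form $e^{cC_h^2 T}$, uniform in $x,\alpha,\theta$. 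Taking $C_\mu$ to be the larger of $e^{8C_h^2 T}$ and this constant yields both claims with a single $C_\mu$ of the stated form.

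The main obstacle is the cross term $\mathbb{E}_\theta^*[A^2F]$: the target bound is \emph{linear} in $\mathbb{E}_\theta[f(X_t^\delta)]$, so a direct Cauchy--Schwarz of $A^2$ against $F$ is too lossy (it would produce $\mathbb{E}_\theta[f(X_t^\delta)^2]$, hence require $f^2\in\mathcal A_\eta^\theta$). The resolution is to decouple the observation-path functional $A^2=(\phi_t^{\delta,\alpha}[1]/\phi_t^{\delta,\theta}[1])^2$ from $f(X_t^\delta)$ by conditioning on $X^\delta$, and then to control the conditional second moment of the observation likelihood ratio uniformly over $X^\delta$-paths via Girsanov together with the exponential moment estimates --- all of which depend on $\|h_\theta\|_\infty\le C_h$.
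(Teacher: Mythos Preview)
Your argument is correct and uses the same ingredients as the paper: the measure swap $\mathbb E_\alpha^*\leftrightarrow\mathbb E_\theta^*$ on observation functionals, conditional Jensen on $\phi_t^{\delta,\theta}[1]=\mathbb E_\theta^*[Z_t^{\delta,\theta}\mid\mathcal Y_t^\delta]$, exponential-martingale moment bounds from $\|h_\theta\|_\infty\le C_h$, and the independence of $X^\delta$ and $Y^\delta$ under $\mathbb P_\theta^*$.

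The organization differs slightly. For the first bound the paper uses the additive Cauchy inequality $ab\le(a^2+b^2)/2$ on the ratio, whereas you use the multiplicative Cauchy--Schwarz; your route in fact gives the sharper constant $e^{8C_h^2T}$ (by AM--GM this is no larger than the paper's $\tfrac12(e^{6C_h^2T}+e^{10C_h^2T})$, and incidentally the paper's exponent $12$ in the negative-moment term is a slip---your $10$ is correct for $r=-4$). For the second bound the paper conditions on $(X_s^\delta)_{s\le t}$ \emph{first} and then splits $Z_t^{\delta,\theta}\cdot A\le\tfrac12((Z_t^{\delta,\theta})^2+A^2)$; this gives the factor $e^{tC_h^2}$ directly from $\mathbb E_\theta^*[(Z_t^{\delta,\theta})^2\mid X^\delta]$ and the factor $C_\mu$ directly from the first part, using that $A^2$ is $\mathcal Y_t^\delta$-measurable and hence independent of $X^\delta$. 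Your split $A\le\tfrac12(1+A^2)$ first, condition later, forces an extra Cauchy--Schwarz on $\mathbb E_\theta^*[A^2 Z_t^{\delta,\theta}\mid X^\delta=x]$ and hence a larger $C_\mu$, but is equally valid; the paper's ordering is just a little cleaner and avoids having to take the maximum of two constants at the end.
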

\begin{proof} From the Cauchy inequality (i.e. $ab\leq a^{2}/2+b^{2}/2$ for all $a,b\in\mathbb R$), we have the following uniform bound:
\begin{align*}
\mathbb E_\theta^*\left[\left(\frac{\phi_t^{\delta,\alpha}[1]}{\phi_t^{\delta,\theta}[1]}\right)^2\right]&\leq \frac 12\mathbb E_\theta^*\left[\left(\phi_t^{\delta,\alpha}[1]\right)^4\right]+\frac 12\mathbb E_\theta^*\left[\left(\frac{1}{\phi_t^{\delta,\theta}[1]}\right)^4\right]\\
&\leq \frac 12\mathbb E_\theta^*\left[\left(Z_t^{\delta,\alpha}\right)^4\right]+\frac 12\mathbb E_\theta^*\left[\left(Z_t^{\delta,\theta}\right)^{-4}\right]\\
&= \frac 12\mathbb E_\theta^*\mathbb E_\theta^*\left[\left(Z_t^{\delta,\alpha}\right)^4\Big|(X_s^\delta)_{s\leq t}\right]+\frac 12\mathbb E_\theta^*\mathbb E_\theta^*\left[\left(Z_t^{\delta,\theta}\right)^{-4}\Big|(X_s^\delta)_{s\leq t}\right]\\
&= \frac 12\mathbb E_\theta^*\left[e^{6\int_0^t|h_\alpha(X_s^\delta)|^2ds}\right]+\frac{1}{2}\mathbb E_\theta^*\left[e^{12\int_0^t|h_\theta(X_s^\delta)|^2ds}\right]\\
&\leq \frac{e^{6TC_h^2}+e^{12 TC_h^2}}{2}\\
&<\infty\ .
\end{align*}
This proves the first statement of the lemma with the constant being $C_\mu \doteq\frac{1}{2}\left(e^{6TC_h^2}+e^{12TC_h^2}\right)$. To prove the lemma's second statement, we take any $f\in \mathcal A_\theta^\eta$  with $f\geq 0$, and proceed as follows:
\begin{align*}
\mathbb E_\alpha\pi_t^{\delta,\theta}[f]
&=\frac{\mathbb E_\alpha^*\left[Z_t^{\delta,\alpha}\pi_t^{\delta,\theta}[f]\right]}{\mathbb E_\alpha^*\left[Z_t^{\delta,\alpha}\right]}\\
&=\mathbb E_\alpha^*\left[Z_t^{\delta,\alpha}\pi_t^{\delta,\theta}[f]\right]=\mathbb E_\alpha^*\left[\mathbb E_\alpha^*\left[Z_t^{\delta,\alpha}\pi_t^{\delta,\theta}[f]\Big|\mathcal Y_t^\delta\right]\right]\\
&=\mathbb E_\alpha^*\left[\mathbb E_\alpha^*\left[Z_t^{\delta,\alpha}\Big|\mathcal Y_t^\delta\right]\pi_t^{\delta,\theta}[f]\right]=\mathbb E_\alpha^*\left[\phi_t^{\delta,\alpha}[1]\pi_t^{\delta,\theta}[f]\right]
\end{align*}
and because $(Y_t^\delta)_{t\leq T}$ is  Brownian motion under both $\mathbb P_\alpha^*$ and $\mathbb P_\theta^*$, we have that the last display continues as
\begin{align*}
=\mathbb E_\theta^*\left[\phi_t^{\delta,\alpha}[1]\pi_t^{\delta,\theta}[f]\right]&=
\mathbb E_\theta^*\left[\frac{\phi_t^{\delta,\alpha}[1]}{\phi_t^{\delta,\theta}[1]}\phi_t^{\delta,\theta}[f]\right]\\
&=\mathbb E_\theta^*\left[\frac{\phi_t^{\delta,\alpha}[1]}{\phi_t^{\delta,\theta}[1]}\mathbb E_\theta^*\left[Z_t^{\delta,\theta}f(X_t^\delta)\Big|\mathcal Y_t^\delta\right]\right]\\
&=\mathbb E_\theta^*\left[\mathbb E_\theta^*\left[\frac{\phi_t^{\delta,\alpha}[1]}{\phi_t^{\delta,\theta}[1]}Z_t^{\delta,\theta}f(X_t^\delta)\Big|\mathcal Y_t^\delta\right]\right]\\
&=\mathbb E_\theta^*\left[\frac{\phi_t^{\delta,\alpha}[1]}{\phi_t^{\delta,\theta}[1]}Z_t^{\delta,\theta}f(X_t^\delta)\right]\\
&=\mathbb E_\theta^*\left[f(X_t^\delta)\mathbb E_\theta^*\left[\frac{\phi_t^{\delta,\alpha}[1]}{\phi_t^{\delta,\theta}[1]}Z_t^{\delta,\theta}\Big|( X_s^\delta)_{s\leq t}\right]\right]\\
&\leq\frac 12\mathbb E_\theta^*\left[f(X_t^\delta)\mathbb E_\theta^*\left[\left(Z_t^{\delta,\theta}\right)^2+\left(\frac{\phi_t^{\delta,\alpha}[1]}{\phi_t^{\delta,\theta}[1]}\right)^2\Big|(X_s^\delta)_{s\leq t}\right]\right]\\
&=\frac 12\mathbb E_\theta^*\left[f(X_t^\delta)\exp\left(\int_0^t|h_\theta^\delta(X_s^\delta)|^2ds\right)\right]+\frac 12\mathbb E_\theta^*\left[f(X_t^\delta)\left(\frac{\phi_t^{\delta,\alpha}[1]}{\phi_t^{\delta,\theta}[1]}\right)^2\right]\\
&=\frac 12\mathbb E_\theta^*\left[f(X_t^\delta)\exp\left(\int_0^t|h_\theta^\delta(X_s^\delta)|^2ds\right)\right]+\frac 12\mathbb E_\theta^*\left[f(X_t^\delta)\right]\underbrace{\mathbb E_\theta^*\left[\left(\frac{\phi_t^{\delta,\alpha}[1]}{\phi_t^{\delta,\theta}[1]}\right)^2\right]}_{\leq C_\mu}\\
&\leq  \frac{\mathbb E_\theta^*\left[f(X_t^\delta)\right]}{2}\left(e^{tC_h^2 }+C_\mu\right)\ .
\end{align*}
This concludes the proof of the lemma.
\end{proof}

\begin{lemma}\label{L:UsedForTightnessPsi}
Assume Conditions \ref{A:Assumption1} and \ref{A:Assumption3}. For any $s\in(0,T]$, for any $\alpha,\theta\in\Theta$, we have that there exists a constant $C_{i,j}(T,\theta)<\infty$ that may depend on $i,j,T,\theta$ but does not depend on $\delta$ such that for $\delta$ small enough
\begin{align*}
&\sup_{i,j\geq 1}\frac{1}{\delta}\left|\mathbb{E}_{\alpha}\pi_s^{\delta,\theta}[\psi_{i}^{\theta}]\pi_s^{\delta,\theta}[\psi_{j}^{\theta}]\right|
\leq C_{i,j}(T,\theta)
\end{align*}
In addition, we also have that
\[
\lim_{\delta\downarrow 0}\frac{1}{\delta}\mathbb{E}_{\alpha}\left[\pi^{\delta,\theta}_{s}[\psi_{i}^{\theta}]\pi^{\delta,\theta}_{s}[\psi_{j}^{\theta}]\right]=\frac{\left<h_{\theta},\psi_i^{\theta}\right>_{\theta} \left<h_{\theta},\psi_j^{\theta}\right>_{\theta}}{\lambda_i^{\theta}+\lambda_j^{\theta}}.
\]
\end{lemma}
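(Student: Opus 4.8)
The backbone of the argument is the Duhamel identity \eqref{eq:KSsoln2}. Write $\pi_s^{\delta,\theta}[\psi_i^{\theta}]=A_i^{\delta}(s)+M_i^{\delta}(s)$, where $A_i^{\delta}(s)=e^{-\lambda_i^{\theta}s/\delta}\pi_0^{\theta}[\psi_i^{\theta}]$ is deterministic and $M_i^{\delta}(s)=\int_0^s e^{-\lambda_i^{\theta}(s-\rho)/\delta}g_i^{\delta}(\rho)\,d\nu_\rho^{\delta,\theta}$ with $g_i^{\delta}(\rho)=cov^{\delta,\theta}(h_{\theta}(X_\rho^{\delta}),\psi_i^{\theta}(X_\rho^{\delta})\mid\mathcal Y_\rho^{\delta})$. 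Multiplying the representations for the indices $i$ and $j$ and taking $\mathbb E_\alpha$ splits $\mathbb E_\alpha[\pi_s^{\delta,\theta}[\psi_i^{\theta}]\pi_s^{\delta,\theta}[\psi_j^{\theta}]]$ into the deterministic piece $A_i^{\delta}A_j^{\delta}$, two mixed pieces, and $\mathbb E_\alpha[M_i^{\delta}M_j^{\delta}]$. The plan is to show that after division by $\delta$ only the genuine martingale part of $\mathbb E_\alpha[M_i^{\delta}M_j^{\delta}]$ survives in the limit, and that every piece is bounded uniformly in $\delta$ by a constant of the form \eqref{Eq:BoundedTerm0}.

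Because $\nu^{\delta,\theta}$ is a Brownian motion under $\mathbb P_\theta$ but not under $\mathbb P_\alpha$, the next step is to re-express things through the $\mathbb P_\alpha$-innovations using $d\nu_\rho^{\delta,\theta}=d\nu_\rho^{\delta,\alpha}+(\pi_\rho^{\delta,\alpha}[h_\alpha]-\pi_\rho^{\delta,\theta}[h_\theta])\,d\rho$, so that $M_i^{\delta}=\widetilde M_i^{\delta}+D_i^{\delta}$ with $\widetilde M_i^{\delta}$ a square-integrable $\mathbb P_\alpha$-martingale and $D_i^{\delta}$ a finite-variation term whose integrand is dominated by $2C_h|g_i^{\delta}(\rho)|$ (Condition \ref{A:Assumption3}(i)). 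Jensen's inequality and boundedness of $h_\theta$ give $|g_i^{\delta}(\rho)|\le 2C_h(\pi_\rho^{\delta,\theta}[(\psi_i^{\theta})^2])^{1/2}$, and Lemma \ref{L:parameterMismatch} applied with $f=(\psi_i^{\theta})^2\ge 0$, together with \eqref{Eq:BoundedTerm01}, yields $\sup_{\delta\in(0,1),\rho\in[0,T]}\mathbb E_\alpha[(g_i^{\delta}(\rho))^2]\le C\,C(\psi_i,T,\theta)$. Feeding these bounds into the It\^o isometry, the Cauchy--Schwarz inequality, and the elementary estimates $\int_0^s e^{-c(s-\rho)/\delta}d\rho\le \delta/c$ and $\sup_{\delta>0}\delta^{-1}e^{-cs/\delta}=(ecs)^{-1}$, one verifies that $\delta^{-1}\mathbb E_\alpha[A_i^{\delta}A_j^{\delta}]$, $\delta^{-1}\mathbb E_\alpha[A_i^{\delta}M_j^{\delta}]$, $\delta^{-1}\mathbb E_\alpha[\widetilde M_i^{\delta}D_j^{\delta}]$ and $\delta^{-1}\mathbb E_\alpha[D_i^{\delta}D_j^{\delta}]$ all tend to $0$ as $\delta\downarrow 0$ and, for $\delta$ small (uniformly in $i,j$, since $\lambda_i^{\theta}+\lambda_j^{\theta}\ge 2\lambda_1^{\theta}$), are bounded by the two summands in \eqref{Eq:BoundedTerm0}.

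It remains to analyze $\delta^{-1}\mathbb E_\alpha[\widetilde M_i^{\delta}(s)\widetilde M_j^{\delta}(s)]$, which equals, via the martingale cross-variation, $\delta^{-1}\int_0^s e^{-(\lambda_i^{\theta}+\lambda_j^{\theta})(s-\rho)/\delta}\mathbb E_\alpha[g_i^{\delta}(\rho)g_j^{\delta}(\rho)]\,d\rho$. Combining the bound on $\mathbb E_\alpha[(g_i^{\delta})^2]$ with $\int_0^s e^{-(\lambda_i^{\theta}+\lambda_j^{\theta})(s-\rho)/\delta}d\rho\le \delta/(\lambda_i^{\theta}+\lambda_j^{\theta})$ gives precisely the second summand of \eqref{Eq:BoundedTerm0}, completing the uniform bound. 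For the limit, substitute $\rho=s-\delta u$ to obtain $\int_0^{s/\delta}e^{-(\lambda_i^{\theta}+\lambda_j^{\theta})u}\mathbb E_\alpha[g_i^{\delta}(s-\delta u)g_j^{\delta}(s-\delta u)]\,du$. For fixed $u>0$, Theorem \ref{T:FilterConvergence1}(ii) applied to $\psi_i^{\theta},\psi_j^{\theta},h_\theta\psi_i^{\theta},h_\theta\psi_j^{\theta}\in\mathcal A_\eta^{\theta}$ (Condition \ref{A:Assumption3}(iii)), together with the uniform $L^2$-bounds and boundedness of $\pi^{\delta,\theta}[h_\theta]$, gives $g_i^{\delta}(s-\delta u)\to\langle h_\theta,\psi_i^{\theta}\rangle_\theta$ in $L^2(\mathbb P_\alpha)$ --- the deterministic contribution $e^{-\lambda_i^{\theta}s/\delta}e^{\lambda_i^{\theta}u}\pi_0^{\theta}[\psi_i^{\theta}]\to 0$ because $s>0$, and $\bar\psi_i^{\theta}=\langle\psi_i^{\theta},1\rangle_\theta=0$ for $i\ge 1$ by \eqref{Eq:OrthonormalBasis} --- so $\mathbb E_\alpha[g_i^{\delta}(s-\delta u)g_j^{\delta}(s-\delta u)]\to\langle h_\theta,\psi_i^{\theta}\rangle_\theta\langle h_\theta,\psi_j^{\theta}\rangle_\theta$. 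Since the integrand is dominated by $C e^{-(\lambda_i^{\theta}+\lambda_j^{\theta})u}$ uniformly in $\delta$, dominated convergence yields the limit $\langle h_\theta,\psi_i^{\theta}\rangle_\theta\langle h_\theta,\psi_j^{\theta}\rangle_\theta/(\lambda_i^{\theta}+\lambda_j^{\theta})$, which with the previous step finishes the proof.

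The main obstacle is the reference-measure mismatch: expectations are under $\mathbb P_\alpha$ while the martingale structure of \eqref{eq:KSsoln2} is natural under $\mathbb P_\theta$, so the clean It\^o-isometry computation is not directly available, and one must isolate the true $\mathbb P_\alpha$-martingale $\widetilde M_i^{\delta}$ and show that the bounded drift correction $D_i^{\delta}$ contributes only at order $o(\delta)$. This is exactly where Lemma \ref{L:parameterMismatch} and the uniform bound $\|h_\theta\|_\infty\le C_h$ are indispensable, since they convert the otherwise intractable $\mathbb P_\alpha$-moments of products of filter functionals into $\mathbb P_\theta$-moments of $\psi_i^{\theta}(X^{\delta})$ that are controlled by \eqref{Eq:BoundedTerm01}. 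A secondary technical point is the interchange of limit and integral in the approximate-identity term, which is handled by the rescaling $\rho=s-\delta u$ together with the uniform-in-$t$ $L^2$-convergence of the filter in Theorem \ref{T:FilterConvergence1}.
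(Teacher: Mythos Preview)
Your proposal is correct and follows essentially the same route as the paper: both rely on the Duhamel representation \eqref{eq:KSsoln2}, handle the $\mathbb P_\theta$--$\mathbb P_\alpha$ mismatch by writing $d\nu^{\delta,\theta}=d\nu^{\delta,\alpha}+(\pi^{\delta,\alpha}[h_\alpha]-\pi^{\delta,\theta}[h_\theta])\,dt$, invoke Lemma~\ref{L:parameterMismatch} together with \eqref{Eq:BoundedTerm01} to control moments, and use Theorem~\ref{T:FilterConvergence1} for the limit.

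The only organizational difference is that the paper applies It\^o's product rule to $\pi_s^{\delta,\theta}[\psi_i^\theta]\pi_s^{\delta,\theta}[\psi_j^\theta]$ directly (using the Kushner--Stratonovich equation), takes $\mathbb E_\alpha$, and then solves the resulting linear ODE by Duhamel, arriving at the three-term formula \eqref{Eq:UsedForBoundPsi}; you instead decompose each $M_i^\delta$ into a $\mathbb P_\alpha$-martingale $\widetilde M_i^\delta$ plus a drift $D_i^\delta$ and expand the nine-fold product. Your treatment of the limit via the substitution $\rho=s-\delta u$ and dominated convergence is a bit more explicit than the paper's one-line ``add and subtract $\langle h_\theta,\psi_i^\theta\rangle_\theta$ and use Theorem~\ref{T:FilterConvergence1}'', but the content is the same.
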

\begin{proof}
Based on (\ref{eq:KSsoln2}), we can write
\begin{align}
&\frac{1}{\delta}\mathbb{E}_{\alpha}\left[\pi_s^{\delta,\theta}[\psi_{i}^{\theta}]\pi_s^{\delta,\theta}[\psi_{j}^{\theta}]\right]\nonumber\\
&=\frac{1}{\delta}e^{-\frac{\lambda_i^{\theta}+\lambda_j^{\theta}}{\delta}s}\pi^{\theta}_{0}[\psi_{i}^{\theta}]\pi^{\theta}_{0}[\psi_{j}^{\theta}]\nonumber\\
&\hspace{0.5cm}+\frac{1}{\delta}\int_{0}^{s}
\mathbb{E}_{\alpha}\left\{\left[e^{-\frac{\lambda_i^{\theta}}{\delta}(s-\rho)}\pi^{\delta,\theta}_{\rho}[\psi_{j}^{\theta}]\left(\pi^{\delta,\theta}_{\rho}[h_{\theta}\psi_{i}^{\theta}]-\pi^{\delta,\theta}_{\rho}[\psi_{i}^{\theta}]\pi^{\delta,\theta}_{\rho}[h_{\theta}]   \right)+\right.\right.\nonumber\\
&\hspace{1cm}\left.\left.+e^{-\frac{\lambda_j^{\theta}}{\delta}(s-\rho)}\pi^{\delta,\theta}_{\rho}[\psi_{i}^{\theta}]\left(\pi^{\delta,\theta}_{\rho}[h_{\theta}\psi_{j}^{\theta}]-\pi^{\delta,\theta}_{\rho}[\psi_{j}^{\theta}]\pi^{\delta,\theta}_{\rho}[h_{\theta}]   \right)\right]\left(\pi^{\delta,\theta}_{\rho}[h_{\theta}]-\pi^{\delta,\alpha}_{\rho}[h_{\alpha}]\right)\right\}d\rho\nonumber\\
&\hspace{0.5cm}+\frac{1}{\delta}\int_{0}^{s}e^{-\frac{\lambda_i^{\theta}+\lambda_j^{\theta}}{\delta}(s-\rho)}
\mathbb{E}_{\alpha}\left[\left(\pi^{\delta,\theta}_{\rho}[h_{\theta}\psi_{i}^{\theta}]-\pi^{\delta,\theta}_{\rho}[\psi_{i}^{\theta}]\pi^{\delta,\theta}_{\rho}[h_{\theta}]   \right)\left(\pi^{\delta,\theta}_{\rho}[h_{\theta}\psi_{j}^{\theta}]-\pi^{\delta,\theta}_{\rho}[\psi_{j}^{\theta}]\pi^{\delta,\theta}_{\rho}[h_{\theta}]   \right)\right]d\rho \ ,
\label{Eq:UsedForBoundPsi}
\end{align}
and then taking absolute values inside the integrals, applying the Cauchy inequality ($ab\leq a^{2}/2+b^{2}/2$ for all $a,b\in\mathbb R$) and applying Lemma \ref{L:parameterMismatch}, we have the following bound:
\begin{align}
&\frac{1}{\delta}\mathbb{E}_{\alpha}\left|\pi_s^{\delta,\theta}[\psi_{i}^{\theta}]\pi_s^{\delta,\theta}[\psi_{j}^{\theta}]\right|\nonumber\\
&\leq \frac{1}{\delta}e^{-\frac{\lambda_i^{\theta}+\lambda_j^{\theta}}{\delta}s}\left|\pi^{\theta}_{0}[\psi_{i}^{\theta}]\pi^{\theta}_{0}[\psi_{j}^{\theta}]\right|\nonumber\\
&\hspace{0.5cm}+\frac{1}{\delta}\int_{0}^{s}
\mathbb{E}_{\alpha}\left[\left(e^{-\frac{\lambda_i^{\theta}}{\delta}(s-\rho)}\left|\pi^{\delta,\theta}_{\rho}[\psi_{j}^{\theta}]\right|\left|\pi^{\delta,\theta}_{\rho}[h_{\theta}\psi_{i}^{\theta}]-\pi^{\delta,\theta}_{\rho}[\psi_{i}^{\theta}]\pi^{\delta,\theta}_{\rho}[h_{\theta}]   \right|+\right.\right.\nonumber\\
&\hspace{1cm}\left.\left.+e^{-\frac{\lambda_j^{\theta}}{\delta}(s-\rho)}\left|\pi^{\delta,\theta}_{\rho}[\psi_{i}^{\theta}]\right|\left|\pi^{\delta,\theta}_{\rho}[h_{\theta}\psi_{j}^{\theta}]-\pi^{\delta,\theta}_{\rho}[\psi_{j}^{\theta}]\pi^{\delta,\theta}_{\rho}[h_{\theta}]   \right|\right)\left|\pi^{\delta,\theta}_{\rho}[h_{\theta}]-\pi^{\delta,\alpha}_{\rho}[h_{\alpha}]\right|\right]d\rho\nonumber\\
&\hspace{0.5cm}+\frac{1}{\delta}\int_{0}^{s}e^{-\frac{\lambda_i^{\theta}+\lambda_j^{\theta}}{\delta}(s-\rho)}
\mathbb{E}_{\alpha}\left[\left|\pi^{\delta,\theta}_{\rho}[h_{\theta}\psi_{i}^{\theta}]-\pi^{\delta,\theta}_{\rho}[\psi_{i}^{\theta}]\pi^{\delta,\theta}_{\rho}[h_{\theta}]   \right|\left|\pi^{\delta,\theta}_{\rho}[h_{\theta}\psi_{j}^{\theta}]-\pi^{\delta,\theta}_{\rho}[\psi_{j}^{\theta}]\pi^{\delta,\theta}_{\rho}[h_{\theta}]   \right|\right]d\rho \nonumber\\
&\leq \frac{1}{\delta}e^{-\frac{\lambda_i^{\theta}+\lambda_j^{\theta}}{\delta}s}\left|\pi^{\theta}_{0}[\psi_{i}^{\theta}]\pi^{\theta}_{0}[\psi_{j}^{\theta}]\right|\nonumber\\
&\hspace{.5cm}+\frac{1}{2\delta}\int_{0}^{s}
\mathbb{E}_{\alpha}\left[e^{-\frac{\lambda_i^{\theta}}{\delta}(s-\rho)}\left[\left|\pi^{\delta,\theta}_{\rho}[\psi_{j}^{\theta}]\right|^2+\left|\pi^{\delta,\theta}_{\rho}[h_{\theta}\psi_{i}^{\theta}]-\pi^{\delta,\theta}_{\rho}[\psi_{i}^{\theta}]\pi^{\delta,\theta}_{\rho}[h_{\theta}]   \right|^2+\right.\right.\nonumber\\
&\hspace{1cm}\left.\left.+e^{-\frac{\lambda_j^{\theta}}{\delta}(s-\rho)}\left|\pi^{\delta,\theta}_{\rho}[\psi_{i}^{\theta}]\right|^2+\left|\pi^{\delta,\theta}_{\rho}[h_{\theta}\psi_{j}^{\theta}]-\pi^{\delta,\theta}_{\rho}[\psi_{j}^{\theta}]\pi^{\delta,\theta}_{\rho}[h_{\theta}]   \right|^2\right]2C_h\right]d\rho\nonumber\\
&\hspace{0.5cm}+\frac{1}{2\delta}\int_{0}^{s}e^{-\frac{\lambda_i^{\theta}+\lambda_j^{\theta}}{\delta}(s-\rho)}
\mathbb{E}_{\alpha}\left[\left|\pi^{\delta,\theta}_{\rho}[h_{\theta}\psi_{i}^{\theta}]-\pi^{\delta,\theta}_{\rho}[\psi_{i}^{\theta}]\pi^{\delta,\theta}_{\rho}[h_{\theta}]   \right|^2+\left|\pi^{\delta,\theta}_{\rho}[h_{\theta}\psi_{j}^{\theta}]-\pi^{\delta,\theta}_{\rho}[\psi_{j}^{\theta}]\pi^{\delta,\theta}_{\rho}[h_{\theta}]   \right|^2\right]d\rho \nonumber\\
&\leq C_{0}\left(\frac{1}{\lambda_i^{\theta}+\lambda_j^{\theta}}\frac{\lambda_i^{\theta}+\lambda_j^{\theta}}{\delta}e^{-\frac{\lambda_i^{\theta}+\lambda_j^{\theta}}{\delta}s}\left|\pi^{\theta}_{0}[\psi_{i}^{\theta}]\pi^{\theta}_{0}[\psi_{j}^{\theta}]\right|\right.\label{Eq:BoundPsi}\\
&\left.
+\frac 1\delta\int_0^s\left[e^{-\frac{\lambda_i^{\theta}}{\delta}(s-\rho)}+
e^{-\frac{\lambda_j^{\theta}}{\delta}(s-\rho)}\right]\mathbb E_\theta\left[|\psi_i^\theta(X_\rho^\delta)|^2+|\psi_j^\theta(X_\rho^\delta)|^2\right]d\rho\right)\nonumber
\end{align}
where $C_0$ is a constant not dependent on $\delta$. Recall now that by assuming Condition \ref{A:Assumption3}, for every $i\in\mathbb{N}$ we have $\psi_i^\theta\in\mathcal{A}_{\eta}^{\theta}$. This implies that there exists finite constants that may depend on $i, T$ and $\theta$ such that
\[
\sup_{\delta\in(0,1),\rho\in[0,T]}\mathbb E_\theta\left[|\psi_i^\theta(X_\rho^\delta)|^2\right]\leq C(\psi_{i},T,\theta)
\]
Noticing that
\[
\frac{1}{\delta}\int_{0}^{s}e^{-\frac{\lambda_i^{\theta}}{\delta}(s-\rho)}d\rho=\frac{1}{\lambda_i^{\theta}}
\left(1-e^{-\frac{\lambda_i^{\theta}}{\delta}s}\right)\leq \frac{1}{\lambda_i^{\theta}}\ ,
\]
that for $\delta$ sufficiently small $\frac{\lambda_i^{\theta}+\lambda_j^{\theta}}{\delta}e^{-\frac{\lambda_i^{\theta}+\lambda_j^{\theta}}{\delta}s}\leq 1$, and recalling  Condition \ref{A:Assumption3}, it follows that the required bound for the first statement follows with
the constant
\begin{equation}
C_{i,j}(T,\theta)=C_{0}\left(\frac{\left|\pi^{\theta}_{0}[\psi_{i}^{\theta}]\pi^{\theta}_{0}[\psi_{j}^{\theta}]\right|}{\lambda_i^{\theta}+\lambda_j^{\theta}}+
  \left(C(\psi_{i},T,\theta)+C(\psi_{j},T,\theta)\right)\left(\frac{1}{\lambda_i^{\theta}}+\frac{1}{\lambda_j^{\theta}}\right)\right)\label{Eq:BoundedTerm}
\end{equation}
The second statement is obtained by adding and subtracting the terms $\left<h_{\theta},\psi_i^{\theta}\right>_{\theta}$ and $\left<h_{\theta},\psi_j^{\theta}\right>_{\theta}$ in the products of the last integral of (\ref{Eq:UsedForBoundPsi}) and then using Theorem \ref{T:FilterConvergence1}.
\end{proof}

\begin{lemma}\label{L:UsedForTightness}
Assume Conditions \ref{A:Assumption1} and \ref{A:Assumption3} and that $\sum_{i,j=1}^{\infty}\left|\left<h_{\theta},\psi_i^{\theta}\right>_{\theta}\left<h_{\theta},\psi_j^{\theta}\right>_{\theta}\right|C_{i,j}(T,\theta)<\infty$, where
$C_{i,j}(T,\theta)$ is given by (\ref{Eq:BoundedTerm}) in Lemma \ref{L:UsedForTightnessPsi}. For any $0<T<\infty$ and for any $\theta\in\Theta$, we have that there exists a constant $C<\infty$ that does not depend on $\delta$ and $\delta_{0}<\infty $ such that
\[
\sup_{\delta\in(0,\delta_{0})}\mathbb{E}_{\alpha}\left[\frac{1}{\delta}\int_{0}^{T}\left|\pi_s^{\delta,\theta}[\tilde h_{\theta}]\right|^{2}ds\right]<C
\]
\end{lemma}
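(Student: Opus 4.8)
The plan is to expand $\pi_s^{\delta,\theta}[\tilde h_\theta]$ in the eigenbasis of $\mathcal L_\theta$ and reduce the estimate to the second-moment bound already available from Lemma \ref{L:UsedForTightnessPsi}. Since $h_\theta$ is bounded (Condition \ref{A:Assumption3}(i)) we have $\tilde h_\theta=h_\theta-\bar h_\theta\in L^2(\mathcal X,\mu_\theta)$ with $\langle\tilde h_\theta,1\rangle_\theta=0$, so $\tilde h_\theta=\sum_{i=1}^\infty\langle h_\theta,\psi_i^\theta\rangle_\theta\,\psi_i^\theta$ and, as in the proof of Theorem \ref{thm:CLT} (cf. \eqref{Eq:ForLastTerm0}), $\pi_s^{\delta,\theta}[\tilde h_\theta]=\sum_{i=1}^\infty\langle h_\theta,\psi_i^\theta\rangle_\theta\,\pi_s^{\delta,\theta}[\psi_i^\theta]$. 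Squaring, multiplying by $1/\delta$, integrating over $s\in[0,T]$ and taking $\mathbb E_\alpha$, I would arrive — at least formally — at
\[
\mathbb E_\alpha\!\left[\frac1\delta\int_0^T\!\left|\pi_s^{\delta,\theta}[\tilde h_\theta]\right|^2ds\right]=\sum_{i,j=1}^\infty\langle h_\theta,\psi_i^\theta\rangle_\theta\langle h_\theta,\psi_j^\theta\rangle_\theta\int_0^T\frac1\delta\,\mathbb E_\alpha\!\left[\pi_s^{\delta,\theta}[\psi_i^\theta]\pi_s^{\delta,\theta}[\psi_j^\theta]\right]ds\ .
\]

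The substantive step is to justify the exchange of $\mathbb E_\alpha$, $\int_0^T$ and the double series. For this I would appeal to Lemma \ref{L:UsedForTightnessPsi}, which supplies $\delta_0>0$ so that for every $\delta\in(0,\delta_0)$, every $s\in(0,T]$ and all $i,j\ge1$ one has $\frac1\delta\big|\mathbb E_\alpha[\pi_s^{\delta,\theta}[\psi_i^\theta]\pi_s^{\delta,\theta}[\psi_j^\theta]]\big|\le C_{i,j}(T,\theta)$, with $C_{i,j}(T,\theta)$ as in \eqref{Eq:BoundedTerm}. Together with the standing hypothesis $\sum_{i,j\ge1}|\langle h_\theta,\psi_i^\theta\rangle_\theta\langle h_\theta,\psi_j^\theta\rangle_\theta|\,C_{i,j}(T,\theta)<\infty$, this furnishes a $\delta$- and $s$-independent summable majorant, so Tonelli's theorem (applied first to the absolute values of the summands, then to the signed quantities) legitimizes all the interchanges. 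The bound then falls out immediately: for all $\delta\in(0,\delta_0)$,
\[
\mathbb E_\alpha\!\left[\frac1\delta\int_0^T\!\left|\pi_s^{\delta,\theta}[\tilde h_\theta]\right|^2ds\right]\le T\sum_{i,j=1}^\infty\left|\langle h_\theta,\psi_i^\theta\rangle_\theta\langle h_\theta,\psi_j^\theta\rangle_\theta\right|C_{i,j}(T,\theta)\doteq C<\infty\ ,
\]
which is the assertion.

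The main obstacle I anticipate is making the term-by-term manipulation fully rigorous — in particular transferring the $L^2(\mathcal X,\mu_\theta)$-convergence of the eigenexpansion of $\tilde h_\theta$ to an expansion of $\pi_s^{\delta,\theta}[\tilde h_\theta]$ valid $\mathbb P_\alpha$-a.s. (or in $L^2(\mathbb P_\alpha)$), and controlling the tail of the double sum uniformly in $\delta$ and $s$. The first point should be manageable: by Condition \ref{A:Assumption3}(iii) each $\psi_i^\theta\in\mathcal A_\eta^\theta$, so $\psi_i^\theta(X_s^\delta)$ has uniformly bounded second moments under $\mathbb P_\theta$, whence the partial sums of the eigenexpansion converge in $L^2(\mathbb P_\theta)$ and, via the change-of-measure estimates of Lemma \ref{L:parameterMismatch} (exactly as in the proof of Theorem \ref{T:FilterConvergence1}), also in $L^1(\mathbb P_\alpha)$, so that the pointwise and integrated identities above are valid. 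The second point is precisely what Lemma \ref{L:UsedForTightnessPsi} combined with the summability hypothesis delivers, and everything else is routine bookkeeping.
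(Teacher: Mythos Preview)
Your proposal is correct and follows essentially the same route as the paper: expand $\pi_s^{\delta,\theta}[\tilde h_\theta]$ in the eigenbasis, square to obtain the double sum, apply the bound $C_{i,j}(T,\theta)$ from Lemma \ref{L:UsedForTightnessPsi}, and conclude with the summability hypothesis to get $C=T\sum_{i,j\ge1}|\langle h_\theta,\psi_i^\theta\rangle_\theta\langle h_\theta,\psi_j^\theta\rangle_\theta|\,C_{i,j}(T,\theta)$. If anything, you are more careful than the paper in flagging and justifying the interchange of expectation, integral and double series via Tonelli and the majorant from Lemma \ref{L:UsedForTightnessPsi}.
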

\begin{proof}
Recalling that
\[
\pi_s^{\delta,\theta}[\tilde h_{\theta}]=\sum_{i=1}^{\infty}\left<h_{\theta},\psi_i^{\theta}\right>_{\theta} \pi_s^{\delta,\theta}[\psi_i^{\theta}],
\]
we obtain
\begin{align}
\sup_{\delta\in(0,\delta_{0})}\frac{1}{\delta}\int_0^T\mathbb E_\alpha\left|\pi_s^{\delta,\theta}[\tilde h_{\theta}]\right|^{2}ds&=\sup_{\delta\in(0,\delta_{0})}\int_0^T\sum_{i,j=1}^{\infty}\left<h_{\theta},\psi_i^{\theta}\right>_{\theta}\left<h_{\theta},\psi_j^{\theta}\right>_{\theta} \frac{1}{\delta}\mathbb E_\alpha\left[\pi_s^{\delta,\theta}[\psi_i^{\theta}]\pi_s^{\delta,\theta}[\psi_j^{\theta}]\right]ds\nonumber\\
&\leq \sum_{i,j=1}^{\infty}\left|\left<h_{\theta},\psi_i^{\theta}\right>_{\theta}\left<h_{\theta},\psi_j^{\theta}\right>_{\theta} \right|\sup_{\delta\in(0,\delta_{0})}\int_0^T\frac{1}{\delta}\left|\mathbb E_\alpha\pi_s^{\delta,\theta}[\psi_i^{\theta}]\pi_s^{\delta,\theta}[\psi_j^{\theta}]\right|ds\nonumber\\
&\leq T\sum_{i,j=1}^{\infty}\left|\left<h_{\theta},\psi_i^{\theta}\right>_{\theta}\left<h_{\theta},\psi_j^{\theta}\right>_{\theta}\right|C_{i,j}(T,\theta) \nonumber\\
&<\infty\ ,
\nonumber
\end{align}
and so the constant is $C=T\sum_{i,j=1}^{\infty}\left|\left<h_{\theta},\psi_i^{\theta}\right>_{\theta}\left<h_{\theta},\psi_j^{\theta}\right>_{\theta}\right|C_{i,j}(T,\theta)$.
\end{proof}

\begin{lemma}\label{L:ConvergenceInProb1}
Assume Conditions \ref{A:Assumption1} and \ref{A:Assumption3} . For any $0<T<\infty$ and $\theta\in \Theta$ we have
\[
\sup_{t\in[0,T]}\mathbb{E}_{\alpha}\left|\frac{1}{\sqrt{\delta}}\int_0^t e^{-\frac{\lambda_i^{\theta}(t-s)}{\delta}}
\left(\pi_s^{\delta,\theta}[h_{\theta}\psi_i^{\theta}]-\left<h_{\theta},\psi_i^{\theta}\right>_{\theta}-\pi_s^{\delta,\theta}[h_{\theta}]\pi_s^{\delta,\theta}[\psi_i^{\theta}]\right)
d\nu_s^{\delta,\alpha}\right|^{2}\rightarrow 0, \textrm{ as }\delta\downarrow 0
\]
\end{lemma}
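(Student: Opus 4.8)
The plan is to evaluate the left-hand side exactly by It\^o's isometry, reducing it to a deterministic integral of $\mathbb{E}_\alpha|g_s^\delta|^2$ against an approximate-identity kernel, and then to show the integrand decays fast enough. Set $g_s^\delta\doteq\pi_s^{\delta,\theta}[h_\theta\psi_i^\theta]-\langle h_\theta,\psi_i^\theta\rangle_\theta-\pi_s^{\delta,\theta}[h_\theta]\pi_s^{\delta,\theta}[\psi_i^\theta]$. First I would note that $g_s^\delta$ is $\mathcal Y_s^\delta$-adapted and square-integrable on $[0,T]\times\Omega$ (by $\|h_\theta\|_\infty\le C_h$ and $\psi_i^\theta\in\mathcal A_\eta^\theta$ from Condition~\ref{A:Assumption3} together with Lemma~\ref{L:parameterMismatch}). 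Since $\nu^{\delta,\alpha}$ is a $\mathbb{P}_\alpha$-Brownian motion for $(\mathcal Y_t^\delta)$, It\^o's isometry and Tonelli give, for every $t\in[0,T]$,
\[
\mathbb{E}_\alpha\left|\frac{1}{\sqrt\delta}\int_0^t e^{-\lambda_i^\theta(t-s)/\delta}g_s^\delta\,d\nu_s^{\delta,\alpha}\right|^2=\frac1\delta\int_0^t e^{-2\lambda_i^\theta(t-s)/\delta}\,\mathbb{E}_\alpha|g_s^\delta|^2\,ds .
\]
Since $\lambda_i^\theta>0$ for $i\ge1$, the kernel $s\mapsto\tfrac1\delta e^{-2\lambda_i^\theta(t-s)/\delta}$ has total mass $\tfrac1{2\lambda_i^\theta}\bigl(1-e^{-2\lambda_i^\theta t/\delta}\bigr)\le\tfrac1{2\lambda_i^\theta}$ on $(-\infty,t]$ and concentrates at $s=t$ as $\delta\downarrow0$; so it remains to bound $\mathbb{E}_\alpha|g_s^\delta|^2$ and deduce that the displayed integral tends to $0$ uniformly in $t$.

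Next I would decompose $g_s^\delta=A_s^\delta+B_s^\delta-C_s^\delta$, with $A_s^\delta=\pi_s^{\delta,\theta}[h_\theta\psi_i^\theta]-\bar\pi_s^{\delta,\theta}[h_\theta\psi_i^\theta]$, $B_s^\delta=\bar\pi_s^{\delta,\theta}[h_\theta\psi_i^\theta]-\langle h_\theta,\psi_i^\theta\rangle_\theta$ and $C_s^\delta=\pi_s^{\delta,\theta}[h_\theta]\pi_s^{\delta,\theta}[\psi_i^\theta]$, using $\overline{h_\theta\psi_i^\theta}_\theta=\langle h_\theta,\psi_i^\theta\rangle_\theta$ (valid because $\psi_0^\theta\equiv1$). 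For $A_s^\delta$, Theorem~\ref{T:FilterConvergence1}(ii) with test function $h_\theta\psi_i^\theta$ gives $\sup_{s\in[0,T]}\mathbb{E}_\alpha|A_s^\delta|^2\to0$. For $C_s^\delta$, $|C_s^\delta|\le C_h|\pi_s^{\delta,\theta}[\psi_i^\theta]|$, and Lemma~\ref{L:UsedForTightnessPsi} (taken with $i=j$, for $\delta$ small) bounds $\mathbb{E}_\alpha|\pi_s^{\delta,\theta}[\psi_i^\theta]|^2\le\delta\,C_{i,i}(T,\theta)$, so $\sup_s\mathbb{E}_\alpha|C_s^\delta|^2\to0$. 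The term $B_s^\delta=\mathbb{E}_\theta[(h_\theta\psi_i^\theta)(X_s^\delta)]-\langle h_\theta,\psi_i^\theta\rangle_\theta$ is deterministic, uniformly bounded by Condition~\ref{A:Assumption3}, and for each fixed $s>0$ tends to $0$ by the ergodic theorem (since $X_s^\delta\overset{d}{=}X^1_{s/\delta}$ and $s/\delta\to\infty$); crucially, $B_s^\delta\equiv0$ when $X_0\sim\mu_\theta$. Combining, $\mathbb{E}_\alpha|g_s^\delta|^2\le 3\varepsilon(\delta)+3|B_s^\delta|^2$ with $\varepsilon(\delta)\doteq\sup_{s\in[0,T]}\mathbb{E}_\alpha|A_s^\delta|^2+C_h^2\delta C_{i,i}(T,\theta)\to0$, so by the kernel mass bound
\[
\frac1\delta\int_0^t e^{-2\lambda_i^\theta(t-s)/\delta}\mathbb{E}_\alpha|g_s^\delta|^2\,ds\le\frac{3\varepsilon(\delta)}{2\lambda_i^\theta}+\frac3\delta\int_0^t e^{-2\lambda_i^\theta(t-s)/\delta}|B_s^\delta|^2\,ds .
\]
The first term vanishes uniformly in $t$; for the second I would fix $\varrho>0$ and split at $(t-\varrho)^+$, bounding the $[0,(t-\varrho)^+]$-part by $C\,\delta^{-1}e^{-2\lambda_i^\theta\varrho/\delta}\to0$ and, for $t\ge2\varrho$, the $[(t-\varrho)^+,t]$-part by $(2\lambda_i^\theta)^{-1}\sup_{u\ge\varrho/\delta}\bigl|\mathbb{E}_\theta(h_\theta\psi_i^\theta)(X^1_u)-\langle h_\theta,\psi_i^\theta\rangle_\theta\bigr|^2\to0$; this yields the claim uniformly on $[2\varrho,T]$ for every $\varrho>0$.

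The step I expect to be the main obstacle is upgrading this to uniformity in the remaining regime $t\downarrow0$ at a rate comparable to $\delta$, where the kernel samples $g_s^\delta$ near $s=0$ and $g_0^\delta$ need not vanish. If $X_0$ is in its invariant law this difficulty disappears entirely: then $B_s^\delta\equiv0$ and $\pi_0^\theta[\psi_i^\theta]=0$, so the estimates of the previous paragraph hold uniformly in $s\in[0,T]$ with no splitting at all, and the displayed integral is bounded by $3\varepsilon(\delta)/(2\lambda_i^\theta)\to0$ uniformly in $t$, finishing the proof. For a general initial law I would quantify the equilibration via the eigenfunction expansion of the initial density, $1+\sum_{k\ge1}c_k\psi_k^\theta$, which yields $|B_s^\delta|\le Ke^{-\lambda_1^\theta s/\delta}$ with $K<\infty$ under the summability implicit in Condition~\ref{A:Assumption3}, reducing the remaining term to controlling $\delta^{-1}\int_0^t e^{-2\lambda_i^\theta(t-s)/\delta}e^{-2\lambda_1^\theta s/\delta}\,ds$ uniformly in $t$ — the delicate, and decisive, estimate.
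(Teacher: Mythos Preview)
Your core strategy matches the paper's: apply It\^o's isometry, bound the kernel mass by $\frac{1}{\delta}\int_0^t e^{-2\lambda_i^\theta(t-s)/\delta}\,ds\le(2\lambda_i^\theta)^{-1}$, and show the integrand vanishes via filter convergence. The paper's decomposition is slightly different and shorter than your $A+B-C$ split: it inserts $\bar h_\theta$ in the product term to write
\[
g_s^\delta=\bigl(\pi_s^{\delta,\theta}[h_\theta\psi_i^\theta]-\langle h_\theta,\psi_i^\theta\rangle_\theta\bigr)-\bigl(\pi_s^{\delta,\theta}[h_\theta]-\bar h_\theta\bigr)\pi_s^{\delta,\theta}[\psi_i^\theta]-\bar h_\theta\,\pi_s^{\delta,\theta}[\psi_i^\theta],
\]
and then bounds each of the three resulting $L^2$ terms directly by Theorem~\ref{T:FilterConvergence1} (applied to $h_\theta\psi_i^\theta$, $\tilde h_\theta$, and $\psi_i^\theta$), with Lemma~\ref{L:parameterMismatch} supplying the dominating function. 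In particular, the paper never isolates your deterministic piece $B_s^\delta$, does not perform the $t\downarrow 0$ splitting, and does not invoke an eigenfunction-expansion ergodicity rate: it simply cites the uniformity-in-$t$ statement of Theorem~\ref{T:FilterConvergence1} together with dominated convergence and concludes. Your use of Lemma~\ref{L:UsedForTightnessPsi} for $C_s^\delta$ yields a sharper $O(\delta)$ bound than the paper needs; your extended analysis of the small-$t$ regime is more scrupulous than the paper's, which delegates that issue entirely to the uniform convergence asserted in Theorem~\ref{T:FilterConvergence1}.
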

\begin{proof}
Due to It\^{o} isometry we have
\begin{align}
&\mathbb{E}_{\alpha}\left|\frac{1}{\sqrt{\delta}}\int_0^t e^{-\frac{\lambda_i^{\theta}(t-s)}{\delta}}
\left(\pi_s^{\delta,\theta}[h_{\theta}\psi_i^{\theta}]-\left<h_{\theta},\psi_i^{\theta}\right>_{\theta}-\pi_s^{\delta,\theta}[h_{\theta}]\pi_s^{\delta,\theta}[\psi_i^{\theta}]\right)
d\nu_s^{\delta,\alpha}\right|^{2} \nonumber\\
&=\mathbb{E}_{\alpha}\left|\frac{1}{\sqrt{\delta}}\int_0^t e^{-\frac{\lambda_i^{\theta}(t-s)}{\delta}}
\left(\pi_s^{\delta,\theta}[h_{\theta}\psi_i^{\theta}]-\left<h_{\theta},\psi_i^{\theta}\right>_{\theta}-\left(\pi_s^{\delta,\theta}[h_{\theta}]-\bar h_\theta\right)\pi_s^{\delta,\theta}[\psi_i^{\theta}]-\bar h_\theta\pi_s^{\delta,\theta}[\psi_i^{\theta}]\right)
d\nu_s^{\delta,\alpha}\right|^{2} \nonumber\\
&\leq 3 \left[\frac{1}{\delta} \int_0^t e^{-2\frac{\lambda_i^{\theta}(t-s)}{\delta}}
\mathbb{E}_{\alpha}\left|\pi_s^{\delta,\theta}[h_{\theta}\psi_i^{\theta}]-\left<h_{\theta},\psi_i^{\theta}\right>_{\theta}\right|^{2} ds+\frac{C_h^2}{\delta} \int_0^t e^{-2\frac{\lambda_i^{\theta}(t-s)}{\delta}}
\mathbb{E}_{\alpha}\left|\pi_s^{\delta,\theta}[\psi_i^{\theta}]\right|^{2} ds\right.\nonumber\\
&\hspace{0.5cm} \left.
+\frac{1}{\delta} \int_0^t e^{-2\frac{\lambda_i^{\theta}(t-s)}{\delta}}
\mathbb{E}_{\alpha}\left|\pi_s^{\delta,\theta}[h_{\theta}]-\bar{h}_{\theta}\right|^{2} ds\right]\ .
\label{Eq:lastLemmaB3}
\end{align}

Noticing that
\[
\sup_{t\in[0,T]}\frac{1}{\delta} \int_0^t e^{-2\frac{\lambda_i^{\theta}(t-s)}{\delta}}ds=\sup_{t\in[0,T]}\frac{1}{\lambda_{i}^\theta}\left(1-e^{-2\frac{\lambda_i^{\theta}}{\delta} t}\right)\leq \frac{1}{\lambda_{i}^\theta}\ .
\]
the statement of the lemma follows by Theorem \ref{T:FilterConvergence1} and dominated convergence theorem to equation \eqref{Eq:lastLemmaB3}. Notice that  dominated convergence theorem is applicable since we can apply Lemma \ref{L:parameterMismatch} to the integrands and notice that the integrands are expectations of functions in $\mathcal A_\eta^\theta$.
\end{proof}
%%%%%%%%%%%%%%%%%%%%%%%%%%
\begin{lemma}\label{L:MismatchTerm}
Assume the Conditions of Lemma \ref{L:UsedForTightness}. For any $0<T<\infty$, and for any $\theta\in\Theta$ we have in $\mathbb{P}_{\alpha}$ probability and uniformly in $t\in[0,T]$  that
\[\int_0^t\frac{1}{\sqrt\delta}\left( \pi_s^{\delta,\theta}[\tilde h_{\theta}]\right)\left(\pi_s^{\delta,\theta}[h_{\theta}]-\pi_s^{\delta,\alpha}[h_{\alpha}]\right)ds\rightarrow 0, \quad\text{ as  }\delta \downarrow 0\]
\end{lemma}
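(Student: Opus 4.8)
\emph{Proof strategy.} Set $g_s^{\delta}\doteq\pi_s^{\delta,\theta}[h_{\theta}]-\pi_s^{\delta,\alpha}[h_{\alpha}]$; since the $\pi$'s are probability measures and $\|h_\theta\|_\infty,\|h_\alpha\|_\infty\le C_h$ by Condition \ref{A:Assumption3}, we have $|g_s^\delta|\le 2C_h$. The plan is to use the identities $\pi_s^{\delta,\theta}[h_\theta]=\bar h_\theta+\pi_s^{\delta,\theta}[\tilde h_\theta]$ and $\pi_s^{\delta,\alpha}[h_\alpha]=\bar h_\alpha+\pi_s^{\delta,\alpha}[\tilde h_\alpha]$ (with $\tilde h_\alpha\doteq h_\alpha-\bar h_\alpha$) to split
\[\int_0^t\frac{1}{\sqrt\delta}\pi_s^{\delta,\theta}[\tilde h_\theta]\,g_s^\delta\,ds=(\bar h_\theta-\bar h_\alpha)\cdot\mathcal I_t^\delta+\mathcal{II}_t^\delta-\mathcal{III}_t^\delta,\]
with $\mathcal I_t^\delta=\tfrac1{\sqrt\delta}\int_0^t\pi_s^{\delta,\theta}[\tilde h_\theta]\,ds$, $\mathcal{II}_t^\delta=\tfrac1{\sqrt\delta}\int_0^t|\pi_s^{\delta,\theta}[\tilde h_\theta]|^2\,ds$, and $\mathcal{III}_t^\delta=\tfrac1{\sqrt\delta}\int_0^t\pi_s^{\delta,\theta}[\tilde h_\theta]\cdot\pi_s^{\delta,\alpha}[\tilde h_\alpha]\,ds$, and then to show that each of the three pieces tends to $0$ in $L^1(\mathbb P_\alpha)$ uniformly in $t\in[0,T]$, which yields the stated convergence in $\mathbb P_\alpha$-probability uniformly in $t$.

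\emph{The two elementary pieces.} For $\mathcal{II}^\delta$ and $\mathcal{III}^\delta$ I would invoke Lemma \ref{L:UsedForTightness}, once with parameter $\theta$ and once with $\theta$ replaced by the true value $\alpha$, giving $\sup_{\delta}\mathbb E_\alpha\big[\tfrac1\delta\int_0^T|\pi_s^{\delta,\theta}[\tilde h_\theta]|^2ds\big]\le C$ and $\sup_{\delta}\mathbb E_\alpha\big[\tfrac1\delta\int_0^T|\pi_s^{\delta,\alpha}[\tilde h_\alpha]|^2ds\big]\le C$. Then $\mathbb E_\alpha\sup_{t\le T}|\mathcal{II}_t^\delta|=\mathbb E_\alpha\mathcal{II}_T^\delta\le\sqrt\delta\,C$, and by the Cauchy--Schwarz inequality, first in $L^2([0,T],ds)$ and then under $\mathbb E_\alpha$,
\[\mathbb E_\alpha\sup_{t\le T}|\mathcal{III}_t^\delta|\le\frac1{\sqrt\delta}\Big(\mathbb E_\alpha\int_0^T|\pi_s^{\delta,\theta}[\tilde h_\theta]|^2ds\Big)^{1/2}\Big(\mathbb E_\alpha\int_0^T|\pi_s^{\delta,\alpha}[\tilde h_\alpha]|^2ds\Big)^{1/2}\le\frac1{\sqrt\delta}\cdot\delta C=\sqrt\delta\,C,\]
and both bounds vanish as $\delta\downarrow0$, uniformly in $t$.

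\emph{The main piece and the main obstacle.} The term $\mathcal I^\delta$ is the crux: the naive bound $|\mathcal I_t^\delta|\le\sqrt T\,\tfrac1{\sqrt\delta}\big(\int_0^T|\pi_s^{\delta,\theta}[\tilde h_\theta]|^2ds\big)^{1/2}$ is only $\mathcal O_p(1)$, so one must exploit the fast time-averaging of the $\mu_\theta$-centered quantity $\pi_s^{\delta,\theta}[\tilde h_\theta]$. I would expand $\pi_s^{\delta,\theta}[\tilde h_\theta]=\sum_{i\ge1}\langle h_\theta,\psi_i^\theta\rangle_\theta\,\pi_s^{\delta,\theta}[\psi_i^\theta]$ and integrate the Duhamel formula \eqref{eq:KSsoln2} in $s$ over $[0,t]$ (Fubini for the stochastic integral) to obtain
\[\int_0^t\pi_s^{\delta,\theta}[\psi_i^\theta]ds=\frac{\delta}{\lambda_i^\theta}\Big[(1-e^{-\lambda_i^\theta t/\delta})\pi_0^\theta[\psi_i^\theta]+\int_0^t\big(1-e^{-\lambda_i^\theta(t-\rho)/\delta}\big)\big(\pi_\rho^{\delta,\theta}[h_\theta\psi_i^\theta]-\pi_\rho^{\delta,\theta}[h_\theta]\pi_\rho^{\delta,\theta}[\psi_i^\theta]\big)d\nu_\rho^{\delta,\theta}\Big],\]
which is of order $\delta$, not $\sqrt\delta$. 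Under $\mathbb P_\alpha$ one writes $d\nu_\rho^{\delta,\theta}=d\nu_\rho^{\delta,\alpha}-g_\rho^\delta d\rho$, so the stochastic integral is a $\mathbb P_\alpha$-martingale plus a drift bounded by $2C_h$; Lemma \ref{L:parameterMismatch} (applied to $f=(h_\theta\psi_i^\theta)^2$ and $f=(\psi_i^\theta)^2$, both in $\mathcal A_\eta^\theta$ because $h_\theta$ is bounded and $\psi_i^\theta\in\mathcal A_\eta^\theta$) bounds $\sup_{\rho,\delta}\|\pi_\rho^{\delta,\theta}[h_\theta\psi_i^\theta]-\pi_\rho^{\delta,\theta}[h_\theta]\pi_\rho^{\delta,\theta}[\psi_i^\theta]\|_{L^2(\mathbb P_\alpha)}$ by $C'\sqrt{C(\psi_i,T,\theta)}$, while $|\pi_0^\theta[\psi_i^\theta]|\le\sqrt{C(\psi_i,T,\theta)}$. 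Combining with Doob's maximal inequality for the martingale part gives $\mathbb E_\alpha\sup_{t\le T}\big|\int_0^t\pi_s^{\delta,\theta}[\psi_i^\theta]ds\big|\le\frac{\delta}{\lambda_i^\theta}C''\sqrt{C(\psi_i,T,\theta)}$, hence
\[\mathbb E_\alpha\sup_{t\le T}|\mathcal I_t^\delta|\le\sqrt\delta\,C''\sum_{i\ge1}\frac{|\langle h_\theta,\psi_i^\theta\rangle_\theta|\sqrt{C(\psi_i,T,\theta)}}{\lambda_i^\theta}\longrightarrow 0\qquad\hbox{as }\delta\downarrow0,\]
the series being finite because, using $C(\psi_i,T,\theta)\ge1$, it is dominated by $\sum_{i,j\ge1}|\langle h_\theta,\psi_i^\theta\rangle_\theta\langle h_\theta,\psi_j^\theta\rangle_\theta|C_{i,j}(T,\theta)<\infty$, the standing hypothesis of Lemma \ref{L:UsedForTightness}. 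Since $|\bar h_\theta-\bar h_\alpha|\le 2C_h$, collecting the three estimates finishes the proof. The delicate step --- the main obstacle --- is precisely $\mathcal I^\delta$: one genuinely needs the explicit form \eqref{eq:KSsoln2} to trade the $\sqrt\delta$ for a $\delta$, the interchange of the summation over $i$ with integration and expectation must be justified through the summability condition of Lemma \ref{L:UsedForTightness}, and the uniformity in $t$ forces one to pass the supremum inside via Doob's inequality after replacing $\nu^{\delta,\theta}$ by $\nu^{\delta,\alpha}$ under $\mathbb P_\alpha$.
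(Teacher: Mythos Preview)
Your decomposition into $\mathcal I^\delta,\mathcal{II}^\delta,\mathcal{III}^\delta$ coincides exactly with the paper's, and your treatment of $\mathcal{II}^\delta$ and $\mathcal{III}^\delta$ via Lemma~\ref{L:UsedForTightness} (plus Cauchy--Schwarz) is the same as the paper's use of the elementary inequality $ab\le a^2/2+b^2/2$.

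For the main piece $\mathcal I^\delta$ your route differs somewhat from the paper's. The paper also expands in eigenfunctions and inserts the Duhamel formula \eqref{eq:KSsoln2}, but then \emph{centers} the covariance: it writes the integrand as $\big(\pi_\rho^{\delta,\theta}[h_\theta\psi_i^\theta]-\langle h_\theta,\psi_i^\theta\rangle_\theta-\pi_\rho^{\delta,\theta}[h_\theta]\pi_\rho^{\delta,\theta}[\psi_i^\theta]\big)+\langle h_\theta,\psi_i^\theta\rangle_\theta$, handling the centered piece with Lemma~\ref{L:ConvergenceInProb1} and the remaining constant piece via the explicit Ornstein--Uhlenbeck process $\Xi^{\delta,i}$ of \eqref{Eq:OUSDE1}--\eqref{Eq:OUSDE2}. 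Your idea of skipping this split and bounding the conditional covariance uniformly through Lemma~\ref{L:parameterMismatch} is cleaner in spirit and avoids both Lemma~\ref{L:ConvergenceInProb1} and the $\Xi^{\delta,i}$ computation.

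There is, however, a genuine technical gap in your $\sup_{t\le T}$ estimate for $\mathcal I^\delta$. After switching $d\nu^{\delta,\theta}\to d\nu^{\delta,\alpha}-g^\delta d\rho$, the ``martingale part'' you refer to is
\[
\int_0^t\big(1-e^{-\lambda_i^\theta(t-\rho)/\delta}\big)c_\rho\,d\nu_\rho^{\delta,\alpha},\qquad c_\rho=\pi_\rho^{\delta,\theta}[h_\theta\psi_i^\theta]-\pi_\rho^{\delta,\theta}[h_\theta]\pi_\rho^{\delta,\theta}[\psi_i^\theta],
\]
which is \emph{not} a $\mathbb P_\alpha$-martingale in $t$ because the integrand depends on $t$ through $e^{-\lambda_i^\theta(t-\rho)/\delta}$; Doob's inequality does not apply directly. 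The paper sidesteps this issue by never taking $\sup_t$ of the inner stochastic integral: since the outer integrand in $\mathcal I_t^\delta=\frac1{\sqrt\delta}\int_0^t\pi_s^{\delta,\theta}[\tilde h_\theta]\,ds$ is integrated in $s$, it bounds $\sup_t|\int_0^t(\cdot)ds|\le\int_0^T|(\cdot)|ds$ and then controls the inner quantity at each fixed $s$. Your approach can be repaired, for instance by recognizing that your Fubini identity is equivalent (via \eqref{eq:KSsoln2}) to the simpler form
\[
\int_0^t\pi_s^{\delta,\theta}[\psi_i^\theta]\,ds=\frac{\delta}{\lambda_i^\theta}\Big(\pi_0^\theta[\psi_i^\theta]-\pi_t^{\delta,\theta}[\psi_i^\theta]+\int_0^t c_\rho\,d\nu_\rho^{\delta,\theta}\Big),
\]
where the stochastic integral \emph{is} a martingale under $\mathbb P_\alpha$ after the measure change; but then you still need a uniform-in-$t$ bound on $\mathbb E_\alpha\sup_{t\le T}|\pi_t^{\delta,\theta}[\psi_i^\theta]|$, which is not immediate from Lemma~\ref{L:parameterMismatch} (that lemma gives only pointwise-in-$t$ moment bounds). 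So either follow the paper's outer-integral trick, or supply the missing maximal estimate for $\pi_t^{\delta,\theta}[\psi_i^\theta]$.
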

\begin{proof}
First we notice that
\[
\pi_s^{\delta,\theta}[h_{\theta}]-\pi_s^{\delta,\alpha}[h_{\alpha}]=\left(\bar{h}_{\theta}-\bar{h}_{\alpha}\right)+\left(\pi_s^{\delta,\theta}[\tilde h_{\theta}]-\pi_s^{\delta,\alpha}[\tilde h_{\alpha}]\right)
\]
Using the Cauchy inequality ($ab\leq a^{2}/2+b^{2}/2$ for all $a,b\in\mathbb R$), this implies that
\begin{align}
&\mathbb{E}_{\alpha}\sup_{t\in[0,T]}\left|\int_0^t\frac{1}{\sqrt\delta}\left( \pi_s^{\delta,\theta}[\tilde h_{\theta}]\right)\left(\pi_s^{\delta,\theta}[h_{\theta}]-\pi_s^{\delta,\alpha}[h_{\alpha}]\right)ds\right|\leq \nonumber\\
&\leq
\left|\bar{h}_{\theta}-\bar{h}_{\alpha}\right|\mathbb{E}_{\alpha}\sup_{t\in[0,T]}\left|\int_0^t\frac{1}{\sqrt\delta}\left( \pi_s^{\delta,\theta}[\tilde h_{\theta}]\right)ds\right|+\mathbb{E}_{\alpha}\sup_{t\in[0,T]}\left|\int_0^t\frac{1}{\sqrt\delta}\left( \pi_s^{\delta,\theta}[\tilde h_{\theta}]\right)\left(\pi_s^{\delta,\theta}[\tilde h_{\theta}]-\pi_s^{\delta,\alpha}[\tilde h_{\alpha}]\right)ds\right|\nonumber\\
&\leq
\left|\bar{h}_{\theta}-\bar{h}_{\alpha}\right|\mathbb{E}_{\alpha}\sup_{t\in[0,T]}\left|\int_0^t\frac{1}{\sqrt\delta}\left( \pi_s^{\delta,\theta}[\tilde h_{\theta}]\right)ds\right|+\mathbb{E}_{\alpha}\sup_{t\in[0,T]}\int_0^t\frac{2}{\sqrt\delta}\left| \pi_s^{\delta,\theta}[\tilde h_{\theta}]\right|^{2}ds
+\mathbb{E}_{\alpha}\sup_{t\in[0,T]}\int_0^t\frac{1}{\sqrt\delta}\left| \pi_s^{\delta,\alpha}[\tilde h_{\alpha}]\right|^{2}ds
\nonumber\\
&\qquad\leq
\left|\bar{h}_{\theta}-\bar{h}_{\alpha}\right|\mathbb{E}_{\alpha}\sup_{t\in[0,T]}\left|\int_0^t\frac{1}{\sqrt\delta}\left( \pi_s^{\delta,\theta}[\tilde h_{\theta}]\right)ds\right|+3C\sqrt{\delta}
\end{align}
where in the last step we used the bound from Lemma  \ref{L:UsedForTightness}. Since $3C\sqrt\delta$ clearly goes to zero as $\delta\downarrow 0$, it remains to show that the first term will also go to zero as $\delta\downarrow 0$. Namely, it remains to show that
\[
\lim_{\delta\downarrow 0}\mathbb{E}_{\alpha}\sup_{t\in[0,T]}\left|\int_0^t\frac{1}{\sqrt\delta}\left( \pi_s^{\delta,\theta}[\tilde h_{\theta}]\right)ds\right|=0.
\]
Using similar computations as in the proof of Lemma \ref{L:UsedForTightness}, we notice that

\begin{align}
&\mathbb{E}_{\alpha}\sup_{t\in[0,T]}\left|\frac{1}{\sqrt{\delta}} \int_{0}^{t}\pi_s^{\delta,\theta}[\tilde h_{\theta}] ds\right|\leq \left| \frac{1}{\sqrt{\delta}}\delta\sum_{i=1}^{\infty}\frac{\left<h_{\theta},\psi_i^{\theta}\right>_{\theta}
}{\lambda_i^{\theta}}\pi^{\theta}_{0}[\psi_{i}^{\theta}]
\sup_{t\in[0,T]}\frac{\lambda_i^{\theta}}{\delta}\int_{0}^{t}e^{-\frac{\lambda_i^{\theta}}{\delta}s}ds\right|\nonumber\\
&\hspace{0.5cm}+\mathbb{E}_{\alpha}\sup_{t\in[0,T]}\left|\sum_{i=1}^{\infty}\left<h_{\theta},\psi_i^{\theta}\right>_{\theta}\frac{1}{\sqrt{\delta}}\int_{0}^{t}\int_{0}^{s}e^{-\frac{\lambda_i^{\theta}}{\delta}(s-\rho)}
\left(\pi^{\delta,\theta}_{\rho}[h_{\theta}\psi_{i}^{\theta}]-\pi^{\delta,\theta}_{\rho}[\psi_{i}^{\theta}]\pi^{\delta,\theta}_{\rho}[h_{\theta}]   \right)d\nu^{\delta,\alpha}_{\rho} ds\right|
\nonumber\\
&\hspace{0.5cm}+\mathbb{E}_{\alpha}\sup_{t\in[0,T]}\left|\sum_{i=1}^{\infty}\left<h_{\theta},\psi_i^{\theta}\right>_{\theta}\frac{1}{\sqrt{\delta}}\int_{0}^{t}\int_{0}^{s}e^{-\frac{\lambda_i^{\theta}}{\delta}(s-\rho)}
\left(\pi^{\delta,\theta}_{\rho}[h_{\theta}\psi_{i}^{\theta}]-\pi^{\delta,\theta}_{\rho}[\psi_{i}^{\theta}]\pi^{\delta,\theta}_{\rho}[h_{\theta}]   \right)\left(\pi^{\delta,\theta}_{\rho}[h_{\theta}]-\pi^{\delta,\alpha}_{\rho}[h_{\alpha}]\right) d\rho ds\right|\nonumber\\
&\leq 2\sqrt{\delta}\sum_{i=1}^{\infty}\left|\frac{\left<h_{\theta},\psi_i^{\theta}\right>_{\theta}
}{\lambda_i^{\theta}}\pi^{\theta}_{0}[\psi_{i}^{\theta}]\right|
\nonumber\\
&\hspace{0.5cm}+\sum_{i=1}^{\infty}\left|\left<h_{\theta},\psi_i^{\theta}\right>_{\theta}\right|\int_{0}^{T}\mathbb{E}_{\alpha}\frac{1}{\sqrt{\delta}}\left|\int_{0}^{s}e^{-\frac{\lambda_i^{\theta}}{\delta}(s-\rho)}
\left(\pi^{\delta,\theta}_{\rho}[h_{\theta}\psi_{i}^{\theta}]-\left<h_{\theta},\psi_i^{\theta}\right>_{\theta}-\pi^{\delta,\theta}_{\rho}[\psi_{i}^{\theta}]\pi^{\delta,\theta}_{\rho}[h_{\theta}]   \right)d\nu^{\delta,\alpha}_{\rho} \right| ds
\nonumber\\
&\hspace{0.5cm}+\sum_{i=1}^{\infty}\left|\left<h_{\theta},\psi_i^{\theta}\right>_{\theta}\right|^{2}\frac{1}{\sqrt{\delta}}\mathbb{E}_{\alpha}\sup_{t\in[0,T]}\left|\int_{0}^{t}\int_{0}^{s}e^{-\frac{\lambda_i^{\theta}}{\delta}(s-\rho)}
d\nu^{\delta,\alpha}_{\rho} ds\right|
\nonumber\\
&\hspace{0.5cm}+\sqrt{\delta}\sum_{i=1}^{\infty}\frac{\left|\left<h_{\theta},\psi_i^{\theta}\right>_{\theta}\right|}{\lambda_{i}^{\theta}}\frac{\lambda_{i}^{\theta}}{\delta}\int_{0}^{T}\int_{0}^{s}e^{-\frac{\lambda_i^{\theta}}{\delta}(s-\rho)}
\mathbb{E}_{\alpha}\left|\left(\pi^{\delta,\theta}_{\rho}[h_{\theta}\psi_{i}^{\theta}]-\pi^{\delta,\theta}_{\rho}[\psi_{i}^{\theta}]\pi^{\delta,\theta}_{\rho}[h_{\theta}]   \right)\left(\pi^{\delta,\theta}_{\rho}[h_{\theta}]-\pi^{\delta,\alpha}_{\rho}[h_{\alpha}]\right)\right| d\rho ds\nonumber
\end{align}
Clearly, the first term goes to zero as $\delta\downarrow 0$.  Similarly, the fourth term also goes to zero as $\delta\downarrow 0$ and this follows by Condition \ref{A:Assumption3}(i)-(iii). By Lemma \ref{L:ConvergenceInProb1}, the second term can also be shown to go to zero. So, it essentially remains to treat the third term. For this purpose, we rcall that the solution to the equation (\ref{Eq:OUSDE1}), $\Xi^{\delta,i}_{t}$, is given by (\ref{Eq:OUSDE2}),
%\begin{equation*}
%d\Xi^{\delta,i}_{t}=-\frac{\lambda_{i}}{\delta}\Xi^{\delta,i}_{s}dt+\frac{1}{\sqrt{\delta}}d\nu_t^{\delta,\alpha}
%\end{equation*}
%is simply
%\begin{equation*}
%\Xi^{\delta,i}_{t}=\frac{1}{\sqrt\delta}\int_0^t e^{-\frac{\lambda_i^{\theta}(t-s)}{\delta}}d\nu_s^{\delta,\alpha}
%\end{equation*}
which is normally distributed with mean zero and variance $\frac{1}{2\lambda_i^{\theta}}\left(1-e^{-\frac{\lambda_i^{\theta}t}{\delta}}\right)$.
Hence,  the third term in question can be written as
\begin{align}
&\sum_{i=1}^{\infty}\left|\left<h_{\theta},\psi_i^{\theta}\right>_{\theta}\right|^{2}\frac{1}{\sqrt{\delta}}\mathbb{E}_{\alpha}\sup_{t\in[0,T]}\left|\int_{0}^{t}\int_{0}^{s}e^{-\frac{\lambda_i^{\theta}}{\delta}(s-\rho)}
d\nu^{\delta,\alpha}_{\rho} ds\right|=
\sum_{i=1}^{\infty}\left|\left<h_{\theta},\psi_i^{\theta}\right>_{\theta}\right|^{2}\mathbb{E}_{\alpha}\sup_{t\in[0,T]}\left|\int_{0}^{t}\Xi^{\delta,i}_{s} ds\right|\nonumber\\
&\hspace{6cm}\leq \sqrt{\delta} \left\{\sum_{i=1}^{\infty}\frac{\left|\left<h_{\theta},\psi_i^{\theta}\right>_{\theta}\right|^{2}}{\lambda_i^{\theta}
}\left[\sqrt{\delta} \mathbb{E}_{\alpha}\sup_{t\in[0,T]}\left|\Xi^{\delta,i}_{t}\right|+\mathbb{E}_{\alpha}\sup_{t\in[0,T]}\left|\nu_t^{\delta,\alpha}\right| \right]\right\},
\end{align}
and it is easy to see that this term goes to zero as $\delta\downarrow 0$. This completes the proof of the lemma.
\end{proof}

%%%%%%%%%%%%%%%%%%%%%%%%%%%%%%%%%%%%%
\begin{lemma}
\label{L:R2convergence}
Assume Conditions \ref{A:Assumption1} and \ref{A:Assumption3} and that
\[
\sum_{i,j=1}^{\infty}\frac{\left|\left<h_{\theta},\psi_i^{\theta}\right>_{\theta} \left<h_{\theta},\psi_j^{\theta}\right>_{\theta}\right|\left(C(\psi_{i},T,\theta)+C(\psi_{j},T,\theta)\right)}{\lambda_i^{\theta}+\lambda_j^{\theta}}
 <\infty
\]
where $C(\psi_{i},T,\theta)$ is a constant such that \[
\sup_{\delta\in(0,1),\rho\in[0,T]}\mathbb E_\theta\left[|\psi_i^\theta(X_\rho^\delta)|^2\right]\leq C(\psi_{i},T,\theta)\ ,
\]
which is statement of equation \eqref{Eq:BoundedTerm01}. Then, the term $R^{2,\delta}_t$ from equation \eqref{Eq:ForLastTerm} converges to zero in mean-square sense uniformly in $t\in[0,T]$,
\[\lim_{\delta\downarrow 0}\mathbb{E}_{\alpha}\sup_{t\in[0,T]}\left|R^{2,\delta}_{t}\right|^{2}=0.\]
\end{lemma}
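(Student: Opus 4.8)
The plan is to use Doob's $L^2$ maximal inequality and the It\^o isometry to reduce the statement to a uniform estimate on an infinite double series of spectral cross-correlations, and then to pass $\delta\downarrow0$ under the sum by dominated convergence, with the summability hypothesis supplying the dominating bound. Write $A_\rho^{\delta,i}\doteq\pi_\rho^{\delta,\theta}[h_\theta\psi_i^\theta]-\left<h_\theta,\psi_i^\theta\right>_\theta-\pi_\rho^{\delta,\theta}[h_\theta]\pi_\rho^{\delta,\theta}[\psi_i^\theta]$ for the centered covariance appearing in $R_t^{2,\delta}$, and $G_s^{\delta,i}\doteq\frac1{\sqrt\delta}\int_0^s e^{-\lambda_i^\theta(s-\rho)/\delta}A_\rho^{\delta,i}\,d\nu_\rho^{\delta,\theta}$, so that $R_t^{2,\delta}=\int_0^t\Phi_s^\delta\,d\nu_s^{\delta,\alpha}$ with $\Phi_s^\delta=\sum_{i\ge1}\left<h_\theta,\psi_i^\theta\right>_\theta G_s^{\delta,i}$. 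Working first with the truncated integrand $\Phi_s^{\delta,N}=\sum_{i=1}^N\left<h_\theta,\psi_i^\theta\right>_\theta G_s^{\delta,i}$ (a bona fide $L^2(\mathbb P_\alpha)$-martingale integrand), Doob and It\^o give $\mathbb E_\alpha\sup_{t\le T}\big|\int_0^t\Phi_s^{\delta,N}\,d\nu_s^{\delta,\alpha}\big|^2\le4\int_0^T\sum_{i,j=1}^N\left<h_\theta,\psi_i^\theta\right>_\theta\left<h_\theta,\psi_j^\theta\right>_\theta\,\mathbb E_\alpha\big[G_s^{\delta,i}G_s^{\delta,j}\big]\,ds$, so it suffices to bound $\sum_{i,j}\big|\left<h_\theta,\psi_i^\theta\right>_\theta\left<h_\theta,\psi_j^\theta\right>_\theta\big|\,\big|\mathbb E_\alpha[G_s^{\delta,i}G_s^{\delta,j}]\big|$ by a finite constant uniform in $\delta\in(0,\delta_0)$ and $s\in[0,T]$ (so that $N\to\infty$ is legitimate) and to show each summand vanishes as $\delta\downarrow0$.

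Next I would split $G_s^{\delta,i}=M_s^{\delta,i}+D_s^{\delta,i}$, using that under $\mathbb P_\alpha$ one has $d\nu_\rho^{\delta,\theta}=d\nu_\rho^{\delta,\alpha}+(\pi_\rho^{\delta,\alpha}[h_\alpha]-\pi_\rho^{\delta,\theta}[h_\theta])\,d\rho$, where $M^{\delta,i}$ is the $d\nu^{\delta,\alpha}$-stochastic integral and $D^{\delta,i}$ the absolutely continuous remainder. The martingale part $M_s^{\delta,i}$ is exactly the quantity controlled in Lemma~\ref{L:ConvergenceInProb1}, hence $\sup_{s\le T}\mathbb E_\alpha|M_s^{\delta,i}|^2\to0$; moreover the It\^o isometry together with $\frac1\delta\int_0^s e^{-2\lambda_i^\theta(s-\rho)/\delta}d\rho\le(2\lambda_i^\theta)^{-1}$ gives $\sup_{s\le T,\,\delta\in(0,1)}\mathbb E_\alpha|M_s^{\delta,i}|^2\le(2\lambda_i^\theta)^{-1}\sup_{\rho\le T,\,\delta\in(0,1)}\mathbb E_\alpha|A_\rho^{\delta,i}|^2$. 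Since $\|h_\theta\|_\infty\le C_h$, Jensen's inequality (for the probability measure $\pi_\rho^{\delta,\theta}$) yields $\mathbb E_\alpha|A_\rho^{\delta,i}|^2\le 3C_h^2\big(2\,\mathbb E_\alpha\pi_\rho^{\delta,\theta}[|\psi_i^\theta|^2]+1\big)$, and Lemma~\ref{L:parameterMismatch} together with \eqref{Eq:BoundedTerm01} bound this by $C\,\big(C(\psi_i,T,\theta)+1\big)$, uniformly in $\delta\in(0,1)$ and $\rho\in[0,T]$. For the drift part, $|D_s^{\delta,i}|\le\frac{2C_h}{\sqrt\delta}\int_0^s e^{-\lambda_i^\theta(s-\rho)/\delta}|A_\rho^{\delta,i}|\,d\rho$, and Cauchy--Schwarz on the exponential kernel gives $\sup_{s\le T}\mathbb E_\alpha|D_s^{\delta,i}|^2\le 4C_h^2\,\delta\,(\lambda_i^\theta)^{-2}\sup_{\rho\le T}\mathbb E_\alpha|A_\rho^{\delta,i}|^2=O(\delta)$.

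The heart of the matter is the uniform bound on the cross-correlations. One must \emph{not} estimate $\mathbb E_\alpha[M_s^{\delta,i}M_s^{\delta,j}]$ by $(\mathbb E_\alpha|M_s^{\delta,i}|^2)^{1/2}(\mathbb E_\alpha|M_s^{\delta,j}|^2)^{1/2}$, since that introduces the denominator $(\lambda_i^\theta\lambda_j^\theta)^{1/2}$, which the hypothesis does not control; instead, because $M^{\delta,i}$ and $M^{\delta,j}$ are It\^o integrals against the same Brownian motion $\nu^{\delta,\alpha}$, the covariation is computed exactly, $\mathbb E_\alpha[M_s^{\delta,i}M_s^{\delta,j}]=\frac1\delta\int_0^s e^{-(\lambda_i^\theta+\lambda_j^\theta)(s-\rho)/\delta}\mathbb E_\alpha[A_\rho^{\delta,i}A_\rho^{\delta,j}]\,d\rho$, and $\frac1\delta\int_0^s e^{-(\lambda_i^\theta+\lambda_j^\theta)(s-\rho)/\delta}d\rho\le(\lambda_i^\theta+\lambda_j^\theta)^{-1}$ yields $\sup_{\delta\in(0,1),\,s\le T}|\mathbb E_\alpha[M_s^{\delta,i}M_s^{\delta,j}]|\le\frac{C}{\lambda_i^\theta+\lambda_j^\theta}\big(C(\psi_i,T,\theta)+C(\psi_j,T,\theta)+1\big)$, which, since $C(\psi_i,T,\theta)\ge1$ (ergodic theorem), has precisely the form the summability hypothesis makes finite. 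The mixed and pure drift terms $\mathbb E_\alpha[M_s^{\delta,i}D_s^{\delta,j}]$ and $\mathbb E_\alpha[D_s^{\delta,i}D_s^{\delta,j}]$ carry an extra $\sqrt\delta$, resp.\ $\delta$; their $\delta$-uniform part is dominated after the elementary inequalities $(\lambda_i^\theta\lambda_j^\theta)^{-1/2}\le\frac12((\lambda_i^\theta)^{-1}+(\lambda_j^\theta)^{-1})$ and $\sqrt{ab}\le\frac{a+b}2$ --- or, more robustly, by recognizing $\sum_i\left<h_\theta,\psi_i^\theta\right>_\theta e^{-\lambda_i^\theta t}A_\cdot^{\delta,i}$ as a centered covariance of $h_\theta$ with $e^{t\mathcal L_\theta}\tilde h_\theta$ and using the $L^2$-contraction $\|e^{t\mathcal L_\theta}\tilde h_\theta\|_{L^2(\mathcal X,\mu_\theta)}\le e^{-\lambda_1^\theta t}\|\tilde h_\theta\|_{L^2(\mathcal X,\mu_\theta)}$; in any case these contributions vanish as $\delta\downarrow0$.

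Finally I would conclude by dominated convergence: for each fixed pair $(i,j)$, $\mathbb E_\alpha[G_s^{\delta,i}G_s^{\delta,j}]\to0$ uniformly in $s\le T$ (from $\sup_s\mathbb E_\alpha|M_s^{\delta,i}|^2\to0$, $\sup_s\mathbb E_\alpha|D_s^{\delta,i}|^2=O(\delta)$, and Cauchy--Schwarz, which is harmless for a \emph{fixed} pair), while $|\left<h_\theta,\psi_i^\theta\right>_\theta\left<h_\theta,\psi_j^\theta\right>_\theta|\,|\mathbb E_\alpha[G_s^{\delta,i}G_s^{\delta,j}]|$ is dominated uniformly in $\delta\in(0,\delta_0)$ and $s\in[0,T]$ by a summable array; hence $\int_0^T\mathbb E_\alpha|\Phi_s^\delta|^2\,ds\to0$ (letting $N\to\infty$ is justified by the same uniform bound), and Doob's inequality gives $\mathbb E_\alpha\sup_{t\le T}|R_t^{2,\delta}|^2\to0$. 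I expect the third step to be the main obstacle: the summability hypothesis is tight, so one must extract exactly the denominator $\lambda_i^\theta+\lambda_j^\theta$ from the martingale cross-correlations --- which forces the exact covariation computation rather than a Cauchy--Schwarz split --- and one must handle the inner integrator $\nu^{\delta,\theta}$, which is \emph{not} a $\mathbb P_\alpha$-martingale when $\theta\neq\alpha$, carefully enough that the drift corrections $D^{\delta,i}$ are genuinely of order $\sqrt\delta$ and remain summable against the eigen-coefficients.
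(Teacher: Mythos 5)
Your proposal is correct and follows essentially the same route as the paper's proof: Doob's maximal inequality plus the It\^o isometry reduce the claim to the double series of cross-correlations, the exact covariation computation (rather than a Cauchy--Schwarz split) extracts the $e^{-(\lambda_i^\theta+\lambda_j^\theta)(s-\rho)/\delta}$ kernel and hence the $(\lambda_i^\theta+\lambda_j^\theta)^{-1}$ factor matching the summability hypothesis, Lemma \ref{L:parameterMismatch} together with \eqref{Eq:BoundedTerm01} supplies the $\delta$-uniform dominating array, and Theorem \ref{T:FilterConvergence1} gives the pointwise vanishing needed for dominated convergence. Your explicit truncation at $N$ and your separation of the $d\nu^{\delta,\theta}$-integrals into a $d\nu^{\delta,\alpha}$-martingale part plus an $O(\sqrt\delta)$ drift correction are refinements of (and consistent with) the paper's bound, which absorbs those corrections into the $|2C_h|^2$ term.
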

\begin{proof}
Since $R_t^{2,\delta}$ is a martingale, by Doob's inequality we have
\[\mathbb{E}_{\alpha}\sup_{t\in[0,T]}\left|R^{2,\delta}_{t}\right|^{2}\leq 4 \mathbb{E}_{\alpha}\left[R^{2,\delta}_T\right]^{2}\ ,\]
and it follows by the Cauchy inequality (i.e. $ab\leq a^{2}/2+b^{2}/2$ for any $a,b\in\mathbb R$) and then It\^{o} isometry, that
\begin{align}
&\mathbb{E}_{\alpha}\left[R^{2,\delta}_{T}\right]^{2}= \sum_{i,j=1}^{\infty}\left<h_{\theta},\psi_i^{\theta}\right>_{\theta} \left<h_{\theta},\psi_j^{\theta}\right>_{\theta}   \frac{1}{\delta}\mathbb{E}_{\alpha}\left[ \int_{0}^{T}\left(\int_0^s e^{-\frac{\lambda_i^{\theta}(s-\rho)}{\delta}}\left(\pi_\rho^{\delta,\theta}[h_{\theta}\psi_i^{\theta}]-\left<h_{\theta},\psi_i^{\theta}\right>_{\theta}
-\pi_\rho^{\delta,\theta}[h_{\theta}]\pi_\rho^{\delta,\theta}[\psi_i^{\theta}]
\right)d\nu_\rho^{\delta,\theta}\right)\times \right.\nonumber\\
&\hspace{1cm}\left.\times \left(\int_0^s e^{-\frac{\lambda_j^{\theta}(s-\rho)}{\delta}}\left(\pi_\rho^{\delta,\theta}[h_{\theta}\psi_j^{\theta}]-\left<h_{\theta},\psi_j^{\theta}\right>_{\theta}
-\pi_\rho^{\delta,\theta}[h_{\theta}]\pi_\rho^{\delta,\theta}[\psi_j^{\theta}]
\right)d\nu_\rho^{\delta,\theta}\right)ds\right]\nonumber\\
&\leq \sum_{i,j=1}^{\infty}\left<h_{\theta},\psi_i^{\theta}\right>_{\theta} \left<h_{\theta},\psi_j^{\theta}\right>_{\theta}  \frac{1}{\delta}\left[ \int_{0}^{T}\Bigg(\int_0^s e^{-\frac{\left(\lambda_i^{\theta}+\lambda_j^{\theta}\right)(s-\rho)}{\delta}}\mathbb{E}_{\alpha}\left(\pi_\rho^{\delta,\theta}[h_{\theta}\psi_i^{\theta}]-\left<h_{\theta},\psi_i^{\theta}\right>_{\theta}
-\pi_\rho^{\delta,\theta}[h_{\theta}]\pi_\rho^{\delta,\theta}[\psi_i^{\theta}]
\right)\times \right.\nonumber\\
&\hspace{1cm}\left.\times \left(\pi_\rho^{\delta,\theta}[h_{\theta}\psi_j^{\theta}]-\left<h_{\theta},\psi_j^{\theta}\right>_{\theta}
-\pi_\rho^{\delta,\theta}[h_{\theta}]\pi_\rho^{\delta,\theta}[\psi_j^{\theta}]
\right)d\rho\Bigg) ds\right]\nonumber\\
&+ |2C_{h}|^{2}\sum_{i,j=1}^{\infty}\left|\left<h_{\theta},\psi_i^{\theta}\right>_{\theta} \left<h_{\theta},\psi_j^{\theta}\right>_{\theta} \right| \frac{1}{\delta}\left[ \int_{0}^{T}\left(\int_0^s e^{-\frac{\left(\lambda_i^{\theta}+\lambda_j^{\theta}\right)(s-\rho)}{\delta}}\mathbb{E}_{\alpha}\left|\pi_\rho^{\delta,\theta}[h_{\theta}\psi_i^{\theta}]-\left<h_{\theta},\psi_i^{\theta}\right>_{\theta}
-\pi_\rho^{\delta,\theta}[h_{\theta}]\pi_\rho^{\delta,\theta}[\psi_i^{\theta}]
\right|\times \right.\right.\nonumber\\
&\hspace{1cm}\left.\times \left|\pi_\rho^{\delta,\theta}[h_{\theta}\psi_j^{\theta}]-\left<h_{\theta},\psi_j^{\theta}\right>_{\theta}
-\pi_\rho^{\delta,\theta}[h_{\theta}]\pi_\rho^{\delta,\theta}[\psi_j^{\theta}]
\right|d\rho\Bigg) ds\right]\nonumber\\
&\leq \frac{1+|C_{h}|^{2}}{2}\sum_{i,j=1}^{\infty}\int_{0}^{T}\left|\left<h_{\theta},\psi_i^{\theta}\right>_{\theta} \left<h_{\theta},\psi_j^{\theta}\right>_{\theta}\right|  \frac{1}{\delta}\left[ \int_0^s e^{-\frac{\left(\lambda_i^{\theta}+\lambda_j^{\theta}\right)(s-\rho)}{\delta}}\Bigg(\mathbb{E}_{\alpha}\left(\pi_\rho^{\delta,\theta}[h_{\theta}\psi_i^{\theta}]-\left<h_{\theta},\psi_i^{\theta}\right>_{\theta}
-\pi_\rho^{\delta,\theta}[h_{\theta}]\pi_\rho^{\delta,\theta}[\psi_i^{\theta}]
\right)^{2}+ \right.\nonumber\\
&\hspace{1cm}\left.\left.+ \mathbb{E}_{\alpha}\left(\pi_\rho^{\delta,\theta}[h_{\theta}\psi_j^{\theta}]-\left<h_{\theta},\psi_j^{\theta}\right>_{\theta}
-\pi_\rho^{\delta,\theta}[h_{\theta}]\pi_\rho^{\delta,\theta}[\psi_j^{\theta}]
\right)^{2} d\rho\right) ds\right]\label{Eq:R2Bound1}
\end{align}
Now we want to apply dominated convergence theorem equation \eqref{Eq:R2Bound1} in order to argue that the upper bound of the last inequality goes to zero as $\delta\downarrow 0$. First of all, we notice that by Lemma \ref{L:parameterMismatch}, we have the following bound for the integrand
\begin{align}
& \left|\left<h_{\theta},\psi_i^{\theta}\right>_{\theta} \left<h_{\theta},\psi_j^{\theta}\right>_{\theta}\right|  \frac{1}{\delta}\left(\int_0^s e^{-\frac{\left(\lambda_i^{\theta}+\lambda_j^{\theta}\right)(s-\rho)}{\delta}}\mathbb{E}_{\alpha}\left(\pi_\rho^{\delta,\theta}[h_{\theta}\psi_i^{\theta}]-\left<h_{\theta},\psi_i^{\theta}\right>_{\theta}
-\pi_\rho^{\delta,\theta}[h_{\theta}]\pi_\rho^{\delta,\theta}[\psi_i^{\theta}]
\right)^{2}+ \right.\nonumber\\
&\hspace{1cm}\left.+ \mathbb{E}_{\alpha}\left(\pi_\rho^{\delta,\theta}[h_{\theta}\psi_j^{\theta}]-\left<h_{\theta},\psi_j^{\theta}\right>_{\theta}
-\pi_\rho^{\delta,\theta}[h_{\theta}]\pi_\rho^{\delta,\theta}[\psi_j^{\theta}]
\right)^{2} d\rho\right) \nonumber\\
&\leq C_{0}\left|\left<h_{\theta},\psi_i^{\theta}\right>_{\theta} \left<h_{\theta},\psi_j^{\theta}\right>_{\theta}\right|  \frac{1}{\delta}\int_0^s e^{-\frac{\left(\lambda_i^{\theta}+\lambda_j^{\theta}\right)(s-\rho)}{\delta}}
\left[\mathbb{E}_{\theta}\left(\left|\psi_i^{\theta}\left(X^{\delta}_{\rho}\right)\right|^{2}+\left|\psi_j^{\theta}\left(X^{\delta}_{\rho}\right)\right|^{2}\right)
\right]
 d\rho \label{Eq:R2Bound2}
\end{align}
Recall now that by assuming Condition \ref{A:Assumption3}, for every $i\in\mathbb{N}$ we have $\psi_i^\theta\in\mathcal{A}_{\eta}^{\theta}$. This implies that there exists finite constants that may depend on $i, T$ and $\theta$ such that
\[
\sup_{\delta\in(0,1),\rho\in[0,T]}\mathbb E_\theta\left[|\psi_i^\theta(X_\rho^\delta)|^2\right]\leq C(\psi_{i},T,\theta)\ ,
\]
Noticing that
\[
\frac{1}{\delta}\int_{0}^{s}e^{-\frac{\lambda_i^{\theta}+\lambda_j^{\theta}}{\delta}(s-\rho)}d\rho=\frac{1}{\lambda_i^{\theta}+\lambda_j^{\theta}}
\left(1-e^{-\frac{\lambda_i^{\theta}+\lambda_j^{\theta}}{\delta}s}\right)\leq \frac{1}{\lambda_i^{\theta}+\lambda_j^{\theta}}
\]
we can then continue bounding (\ref{Eq:R2Bound2}) by the term

\begin{align}
&\leq C_{0}\left|\left<h_{\theta},\psi_i^{\theta}\right>_{\theta} \left<h_{\theta},\psi_j^{\theta}\right>_{\theta}\right|  \frac{1}{\delta}\int_0^s e^{-\frac{\left(\lambda_i^{\theta}+\lambda_j^{\theta}\right)(s-\rho)}{\delta}}
\left[C(\psi_{i},T,\theta)+C(\psi_{j},T,\theta)\right]
 d\rho\nonumber\\
&\leq C_{0}\left[\frac{\left|\left<h_{\theta},\psi_i^{\theta}\right>_{\theta} \left<h_{\theta},\psi_j^{\theta}\right>_{\theta}\right|}{\lambda_i^{\theta}+\lambda_j^{\theta}}
\left(C(\psi_{i},T,\theta)+C(\psi_{j},T,\theta)\right)  \right]
  \label{Eq:R2Bound3}
\end{align}
Hence, the summands in (\ref{Eq:R2Bound1}) are dominated by terms that are summable and is finite irrespective of $\delta\in(0,1)$. Secondly, by Theorem  \ref{T:FilterConvergence1}  we know that for each $i\geq 1$ there is the limit
\[
\lim_{\delta\downarrow 0}\mathbb{E}_{\alpha}\left(\pi_\rho^{\delta,\theta}[h_{\theta}\psi_j^{\theta}]-\left<h_{\theta},\psi_j^{\theta}\right>_{\theta}
-\pi_\rho^{\delta,\theta}[h_{\theta}]\pi_\rho^{\delta,\theta}[\psi_j^{\theta}]
\right)^{2}=0\ .
\]
Hence, by dominated convergence we have established that (\ref{Eq:R2Bound1}) goes to zero in probability, and then it follows that
\begin{equation*}
\lim_{\delta\downarrow 0}\mathbb{E}_{\alpha}\sup_{t\in[0,T]}\left|R^{2,\delta}_{t}\right|^{2}=0.
\end{equation*}

\end{proof}

\begin{lemma}
\label{L:R3convergence}
Assume the Conditions of Lemma \ref{L:UsedForTightness}.
Then, the term $R^{3,\delta}_t$ from equation \eqref{Eq:ForLastTerm} converges to zero in mean-square sense uniformly in $t\in[0,T]$,
\[\lim_{\delta\downarrow 0}\mathbb{E}_{\alpha}\sup_{t\in[0,T]}\left|R^{3,\delta}_{t}\right|^{2}=0.\]
\end{lemma}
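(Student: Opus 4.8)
The plan is to exploit that, for each fixed $\delta$, $R^{3,\delta}_{t}$ is a continuous $\mathbb P_\alpha$-martingale in $t$ with respect to the innovations Brownian motion $\nu^{\delta,\alpha}$, so that Doob's $L^2$ maximal inequality reduces the claim to showing $\mathbb E_\alpha|R^{3,\delta}_T|^2\to0$; concretely $\mathbb E_\alpha\sup_{t\in[0,T]}|R^{3,\delta}_t|^2\le 4\,\mathbb E_\alpha|R^{3,\delta}_T|^2$, exactly as in Lemma \ref{L:R2convergence}. I would then write $R^{3,\delta}_T=\int_0^T G^\delta_s\,d\nu_s^{\delta,\alpha}$ with the single integrand
\[
G^\delta_s \doteq \sum_{i=1}^\infty \left<h_\theta,\psi_i^\theta\right>_\theta^2\,\frac{1}{\sqrt\delta}\int_0^s e^{-\frac{\lambda_i^\theta(s-\rho)}{\delta}}\left(\pi_\rho^{\delta,\theta}[h_\theta]-\pi_\rho^{\delta,\alpha}[h_\alpha]\right)d\rho ,
\]
the interchange of the series over $i$ with the stochastic integral being justified by the uniform (in $(s,\omega)$) bound on the partial sums obtained next.

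The core of the proof is a purely deterministic estimate on $G^\delta_s$. Since $h_\theta$ is bounded by $C_h$ (Condition \ref{A:Assumption3}(i)), the posterior means obey $|\pi_\rho^{\delta,\theta}[h_\theta]-\pi_\rho^{\delta,\alpha}[h_\alpha]|\le 2C_h$, so integrating the exponential kernel gives
\[
\frac{1}{\sqrt\delta}\left|\int_0^s e^{-\frac{\lambda_i^\theta(s-\rho)}{\delta}}\left(\pi_\rho^{\delta,\theta}[h_\theta]-\pi_\rho^{\delta,\alpha}[h_\alpha]\right)d\rho\right|\le \frac{2C_h}{\sqrt\delta}\cdot\frac{\delta}{\lambda_i^\theta}\left(1-e^{-\lambda_i^\theta s/\delta}\right)\le \frac{2C_h\sqrt\delta}{\lambda_i^\theta}.
\]
Summing over $i$, using $0<\lambda_1^\theta\le\lambda_i^\theta$ together with Parseval's identity (so that $\sum_{i\ge1}\left<h_\theta,\psi_i^\theta\right>_\theta^2\le\|h_\theta\|_{L^2(\mathcal X,\mu_\theta)}^2\le C_h^2$), I obtain
\[
\sup_{s\in[0,T]}|G^\delta_s|\le \sqrt\delta\,\sum_{i=1}^\infty\frac{2C_h\left<h_\theta,\psi_i^\theta\right>_\theta^2}{\lambda_i^\theta}\le \frac{2C_h^3}{\lambda_1^\theta}\sqrt\delta
\]
pointwise in $\omega$; this same bound also legitimizes the termwise interchange above, since the partial sums then converge uniformly.

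Feeding this into It\^o's isometry yields
\[
\mathbb E_\alpha|R^{3,\delta}_T|^2=\int_0^T\mathbb E_\alpha|G^\delta_s|^2\,ds\le T\left(\frac{2C_h^3}{\lambda_1^\theta}\right)^2\delta\longrightarrow 0\qquad\hbox{as }\delta\downarrow0,
\]
and combining with Doob's inequality finishes the lemma. There is essentially no analytic obstacle here: the only point deserving a little attention is the exchange of the infinite sum with the stochastic integral, which the uniform bound on $G^\delta_s$ settles. It is worth stressing that, in contrast to Lemma \ref{L:R2convergence}, this argument needs neither the $L^2$-convergence of the filters from Theorem \ref{T:FilterConvergence1} nor a dominated-convergence step; the decisive power of $\sqrt\delta$ is produced for free by the mismatch between the $1/\sqrt\delta$ prefactor and the factor $\delta/\lambda_i^\theta$ coming from integrating the Ornstein--Uhlenbeck kernel $e^{-\lambda_i^\theta(s-\rho)/\delta}$.
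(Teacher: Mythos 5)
Your proof is correct, but it takes a genuinely different and in fact more economical route than the paper's. The paper splits $\pi_\rho^{\delta,\theta}[h_{\theta}]-\pi_\rho^{\delta,\alpha}[h_{\alpha}]$ into the constant piece $\bar h_\theta-\bar h_\alpha$ (handled by explicitly integrating the kernel) and the fluctuating piece $\pi_\rho^{\delta,\theta}[\tilde h_{\theta}]-\pi_\rho^{\delta,\alpha}[\tilde h_{\alpha}]$, and controls the latter via Jensen's inequality together with the uniform bound $\sup_{\delta}\mathbb{E}_{\alpha}\frac{1}{\delta}\int_0^T|\pi_s^{\delta,\theta}[\tilde h_\theta]|^2ds<C$ of Lemma \ref{L:UsedForTightness}; this is why the lemma is stated under the hypotheses of that lemma, i.e.\ under the summability condition on the $C_{i,j}(T,\theta)$. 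You instead observe that the crude pointwise bound $|\pi_\rho^{\delta,\theta}[h_\theta]-\pi_\rho^{\delta,\alpha}[h_\alpha]|\le 2C_h$ already suffices: integrating the kernel produces a factor $\delta/\lambda_i^\theta$ that beats the $1/\sqrt\delta$ prefactor with a full $\sqrt\delta$ to spare, and Parseval plus $\lambda_i^\theta\ge\lambda_1^\theta>0$ makes the sum over $i$ uniformly convergent, so Doob plus It\^o isometry give $\mathbb{E}_\alpha\sup_t|R_t^{3,\delta}|^2=O(\delta)$. Both routes yield the same $O(\delta)$ rate, but yours uses only Condition \ref{A:Assumption3}(i)--(ii) (boundedness of $h_\theta$, the orthonormal eigenbasis, and the spectral gap) and dispenses with Lemma \ref{L:UsedForTightness}, Theorem \ref{T:FilterConvergence1}, and the extra summability hypothesis, so it proves the statement under strictly weaker assumptions. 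Your treatment of the sum--stochastic-integral interchange via uniform convergence of the partial-sum integrands is also adequate. The only cosmetic caveat is that for $n>1$ the quantities $\left<h_\theta,\psi_i^\theta\right>_\theta$ are vectors, so the squares should be read as squared Euclidean norms and the It\^o isometry applied componentwise; this is a notational issue shared with the paper and does not affect the argument.
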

\begin{proof}
 We have
\begin{align}
R^{3,\delta}_{t}&=\sum_{i=1}^{\infty}\left|\left<h_{\theta},\psi_i^{\theta}\right>_{\theta}\right|^{2}\frac{1}{\sqrt\delta}\int_{0}^{t}\left[\int_0^s e^{-\frac{\lambda_i^{\theta}(s-\rho)}{\delta}}\left(\pi_\rho^{\delta,\theta}[h_{\theta}]-\pi_\rho^{\delta,\alpha}[h_{\alpha}]\right)d\rho\right]d\nu_s^{\delta,\alpha}\nonumber\\
&=(\bar{h}_{\theta}-\bar{h}_{\alpha})\sum_{i=1}^{\infty}\left|\left<h_{\theta},\psi_i^{\theta}\right>_{\theta}\right|^{2}\frac{1}{\sqrt\delta}\int_{0}^{t}\left[\int_0^s e^{-\frac{\lambda_i^{\theta}(s-\rho)}{\delta}}d\rho\right]d\nu_s^{\delta,\alpha}\nonumber\\
&\quad+\sum_{i=1}^{\infty}\left|\left<h_{\theta},\psi_i^{\theta}\right>_{\theta}\right|^{2}\frac{1}{\sqrt\delta}\int_{0}^{t}\left[\int_0^s e^{-\frac{\lambda_i^{\theta}(s-\rho)}{\delta}}\left(\pi_\rho^{\delta,\theta}[\tilde h_{\theta}]-\pi_\rho^{\delta,\alpha}[\tilde h_{\alpha}]\right)d\rho\right]d\nu_s^{\delta,\alpha}\nonumber\\
&=\sqrt{\delta}(\bar{h}_{\theta}-\bar{h}_{\alpha})\sum_{i=1}^{\infty}\frac{\left|\left<h_{\theta},\psi_i^{\theta}\right>_{\theta}\right|^{2}}{\lambda_i^{\theta}}\int_{0}^{t}\left(1- e^{-\frac{\lambda_i^{\theta}s}{\delta}}\right)d\nu_s^{\delta,\alpha}\nonumber\\
&\quad+\sum_{i=1}^{\infty}\left|\left<h_{\theta},\psi_i^{\theta}\right>_{\theta}\right|^{2}\frac{1}{\sqrt\delta}\int_{0}^{t}\left[\int_0^s e^{-\frac{\lambda_i^{\theta}(s-\rho)}{\delta}}\left(\pi_\rho^{\delta,\theta}[\tilde h_{\theta}]-\pi_\rho^{\delta,\alpha}[\tilde h_{\alpha}]\right)d\rho\right]d\nu_s^{\delta,\alpha}
\end{align}
By  applying Doob's inequality followed by the Cauchy inequality and then Jensen's inequality,  we have
\begin{align}
\mathbb{E}_{\alpha}\sup_{t\in[0,T]}\left|R^{3,\delta}_{t}\right|^{2}
&\leq \delta 2(\bar{h}_{\theta}-\bar{h}_{\alpha})^{2}\sum_{i,j=1}^{\infty}\frac{\left|\left<h_{\theta},\psi_i^{\theta}\right>_{\theta}\left<h_{\theta},\psi_j^{\theta}\right>_{\theta}\right|^{2}}{\lambda_i^{\theta}+\lambda_j^{\theta}}
\int_{0}^{T}\left(1- e^{-\frac{\lambda_i^{\theta}s}{\delta}}\right)\left(1- e^{-\frac{\lambda_j^{\theta}s}{\delta}}\right)ds\nonumber\\
&\quad+2\sum_{i,j=1}^{\infty}\left|\left<h_{\theta},\psi_i^{\theta}\right>_{\theta}\left<h_{\theta},\psi_j^{\theta}\right>_{\theta}\right|^{2}\frac{1}{\delta}\mathbb{E}_{\alpha}\int_{0}^{T}\left(\int_0^s e^{-\frac{\lambda_i^{\theta}(s-\rho)}{\delta}}\left(\pi_\rho^{\delta,\theta}[\tilde h_{\theta}]-\pi_\rho^{\delta,\alpha}[\tilde h_{\alpha}]\right)d\rho\right)\times\nonumber\\
&\hspace{6cm}\times \left(\int_0^s e^{-\frac{\lambda_j^{\theta}(s-\rho)}{\delta}}\left(\pi_\rho^{\delta,\theta}[\tilde h_{\theta}]-\pi_\rho^{\delta,\alpha}[\tilde h_{\alpha}]\right)d\rho\right)ds\nonumber\\
&\leq \delta 2T(\bar{h}_{\theta}-\bar{h}_{\alpha})^{2}\sum_{i,j=1}^{\infty}\frac{\left|\left<h_{\theta},\psi_i^{\theta}\right>_{\theta}\left<h_{\theta},\psi_j^{\theta}\right>_{\theta}\right|^{2}}{\lambda_i^{\theta}+\lambda_j^{\theta}}
\nonumber\\
&\quad+2\sum_{i,j=1}^{\infty}\left|\left<h_{\theta},\psi_i^{\theta}\right>_{\theta}\left<h_{\theta},\psi_j^{\theta}\right>_{\theta}\right|^{2}\int_{0}^{T}\left(\int_0^s e^{-\frac{2\lambda_1^{\theta}(s-\rho)}{\delta}}\frac{1}{\delta}\mathbb{E}_{\alpha}\left(\pi_\rho^{\delta,\theta}[\tilde h_{\theta}]-\pi_\rho^{\delta,\alpha}[\tilde h_{\alpha}]\right)^{2}d\rho\right)ds\nonumber
\end{align}
Then, by the fact that $v^{2}_{\theta}(h_{\theta})<\infty$ and $\sum_{i,j=1}^{\infty}\left|\left<h_{\theta},\psi_i^{\theta}\right>_{\theta}\left<h_{\theta},\psi_j^{\theta}\right>_{\theta}\right|^{2}<\infty$  (see Remark \ref{R:AssumptionOnVariance}), and the uniform bound from Lemma \ref{L:UsedForTightness}, we obtain that
\begin{equation*}
\lim_{\delta\downarrow 0}\mathbb{E}_{\alpha}\sup_{t\in[0,T]}\left|R^{3,\delta}_{t}\right|^{2}=0.
\end{equation*}
concluding the proof of the lemma.
\end{proof}

\small{\bibliography{refs}}

\end{document}